\DeclareFontFamily{OT1}{rsfs}{}
\DeclareFontShape{OT1}{rsfs}{n}{it}{<-> rsfs10}{}
\DeclareMathAlphabet{\mathscr}{OT1}{rsfs}{n}{it}
\begin{document}

\theoremstyle{plain}

\newtheorem{theorem}{Theorem}[section]
\newtheorem{thm}[equation]{Theorem}
\newtheorem{proposition}[equation]{Proposition}
\newtheorem{corollary}[equation]{Corollary}
\newtheorem{conj}[equation]{Conjecture}
\newtheorem{lemma}[equation]{Lemma}
\newtheorem{definition}[equation]{Definition}
\newtheorem{question}[equation]{Question}

\theoremstyle{definition}
\newtheorem{conjecture}[theorem]{Conjecture}

\newtheorem{example}[equation]{Example}
\numberwithin{equation}{section}

\newtheorem{remark}[equation]{Remark}

\newcommand{\Hom}{{\rm Hom}}
\newcommand{\Aut}{{\rm Aut}}
\newcommand{\rk}{{\rm rk}}
\newcommand{\Ext}{{\rm Ext}}
\newcommand{\Ind}{{\rm Ind}}
\newcommand{\ind}{{\rm ind}}
\newcommand{\Irr}{{\rm Irr}}

\def\G{{\rm G}}
\def\St{{\rm St}}
\def\GL{{\rm GL}}
\def\SO{{\rm SO}}

\def\Ext{{\rm Ext}}
\def\Hom{{\rm Hom}}
\def\Alg{{\rm Alg}}
\def\GL{{\rm GL}}
\def\SO{{\rm SO}}
\def\G{{\rm G}}
\def\U{{\rm U}}
\def\St{{\rm St}}
\def\Wh{{\rm Wh}}
\def\RS{{\rm RS}}
\def\ind{{\rm ind}}
\def\Ind{{\rm Ind}}
\def\csupp{{\rm csupp}}

\title{On higher multiplicity upon restriction \\ from $\GL(n)$ to $\GL(n-1)$}
\author{Mohammed Saad Qadri}
\address{Department of Mathematics \\
Indian Institute of Technology Bombay \\ Mumbai 400076}
\email{185090012@iitb.ac.in}
\date{}

\maketitle

\begin{abstract}
Let $F$ be a non-archimedean local field. Let $\Pi$ be a principal series representation of $\GL_n(F)$ induced from an irreducible cuspidal representation of a Levi subgroup. When $\pi$ is an essentially square integrable representation of $\GL_{n-1}(F)$ we prove that $\Hom_{\GL_{n-1}}(\Pi,\pi)$  $= \mathbb{C}$ and $\Ext^i_{\GL_{n-1}}(\Pi,\pi) = 0$ for all integers $i\geq 1$, with exactly one exception (up to twists), namely, when $\Pi= \nu^{-(\frac{n-1}{2})} \times 
\nu^{-(\frac{n-3}{2})} \times \ldots \times \nu^{(\frac{n-1}{2})}$ and $\pi$ is the Steinberg representation. When $\Pi= \nu^{-(\frac{n-1}{2})} \times 
\nu^{-(\frac{n-3}{2})} \times \ldots \times \nu^{(\frac{n-1}{2})}$ and $\pi$ is the Steinberg representation of $\GL_{n-1}(F)$, then $\dim \Hom_{\GL_{n-1}(F)}(\Pi,\pi)=n$. We also exhibit specific principal series for which each of the intermediate multiplicities $2, 3, \ldots, (n-1)$ are attained.

Along the way, we give a complete list of irreducible non-generic representations of $\GL_{n}(F)$ that have the Steinberg representation of $\GL_{n-1}(F)$ as a quotient upon restriction to $\GL_{n-1}(F)$. We also show that there do not exist non-generic irreducible representations of $\GL_{n}(F)$ that have the generalized Steinberg representation as a quotient upon restriction to $\GL_{n-1}(F)$.
\end{abstract}

\section{Introduction}

Let $F$ be a non-archimedean local field. Let $\Pi$ be a smooth finite-length representation of $\GL_{n}(F)$. We consider $\GL_{n-1}(F)$ as a subgroup of $\GL_{n}(F)$ via the natural embedding $g \rightarrow \begin{pmatrix}
g & 0 \\
0 & 1\\
\end{pmatrix}$. Branching laws about the restriction of $\Pi$ from $\GL_{n}(F)$ to $\GL_{n-1}(F)$ have been a topic of much study. 

One of the basic results in the subject is the multiplicity one theorem for irreducible representations due to Aizenbud-Gourevitch-Rallis-Schiffman \cite{agrs10}, according to which if $\Pi$ is an irreducible representation of $\GL_{n}(F)$ and $\pi$ is an irreducible representation of $\GL_{n-1}(F)$ then, \[ \dim \Hom_{\GL_{n-1}(F)}(\Pi,\pi) \leq 1. \]

More recently, this multiplicity one theorem has been extended by K.Y. Chan to the case when $\Pi$ is a standard representation of $\GL_{n}(F)$ and $\pi$ is the dual of a standard representation of $\GL_{n-1}(F)$ (see \cite[Theorem 1.1]{ch23}) in which case it was shown that,  \[ \dim \Hom_{\GL_{n-1}(F)}(\Pi,\pi) = 1, \] and for all integers $i\geq 1$, \[ \Ext^i_{\GL_{n-1}(F)}(\Pi,\pi) =0. \]

For the above result, it is essential that $\Pi$ and the dual of $\pi$ are standard modules and this ``ordering" of the factors inside the principal series plays a crucial role in the proof of the above result. Since any irreducible representation of $\GL_{n-1}(F)$ is the quotient of a standard representation, and hence any irreducible representation of $\GL_{n-1}(F)$ is a submodule of the dual of a standard module of $\GL_{n-1}(F)$, so one obtains as a corollary of the above result of Chan that if $\Pi$ is a standard representation of $\GL_{n}(F)$ and $\pi$ is an irreducible representation of $\GL_{n-1}(F)$, then \[ \dim \Hom_{\GL_{n-1}(F)}(\Pi,\pi) \leq 1. \] 

A natural question that now arises is the following. Suppose $\Pi$ is a principal series representation of $\GL_{n}(F)$ induced from an irreducible representation of a Levi subgroup. Let $\pi$ be an irreducible representation of $\GL_{n-1}(F)$ as before. Can we determine the dimension of $\Hom_{\GL_{n-1}(F)}(\Pi,\pi)$ or at least give an upper bound on the dimension of this $\Hom$ space? Since any irreducible representation is a quotient of a principal series representation obtained from a cuspidal representation of a Levi subgroup, for the purpose of obtaining an upper bound, it is enough to assume that the principal series representation of $\GL_n(F)$ is induced from an irreducible cuspidal representation of a Levi subgroup inside $\GL_n(F)$.

Such an analysis of multiplicity for a reducible principal series representation has been previously considered in the work of C. G. Venketasubramanian \cite{vs13} where it was shown that if $\pi$ is the trivial representation of $\GL_{n-1}(F)$ then (see \cite[Theorem 7.17]{vs13}), \[ \dim \Hom_{\GL_{n-1}(F)}(\Pi,\pi) \leq 2. \] Moreover, in \cite[Theorem 7.17]{vs13} a complete list of principal series for which higher multiplicity ($= 2$) occurs is also obtained.

In this work, we study the multiplicity question for a general principal series representation $\Pi$ of $\GL_n(F)$ induced from an irreducible cuspidal representation of a Levi subgroup. We restrict our attention to the case when $\pi$ is a generic representation of $\GL_{n-1}(F)$. Since a general principal series representation $\Pi$ of $\GL_{n}(F)$ may be reducible, we no longer expect the multiplicity one property to hold but one may hope to obtain an upper bound on the dimension of the $\Hom$ space.

This paper is written with two goals in mind. The first goal is to identify situations beyond the work of K.Y. Chan involving standard modules where the $\Hom$ multiplicity one property holds. The second goal is to identify situations where higher multiplicity occurs and to obtain an upper bound on the dimension of the $\Hom$ space in such situations. We proceed with the hope that an answer to the former question may allow us to obtain a suitable answer to the latter question. 

The $\Hom$ multiplicity one result of K.Y Chan for standard representations suggests that the ordering of the factors in the principal series $\Pi$ plays an important role in deciding whether higher multiplicity occurs or not. Indeed, our first main result (Theorem \ref{multiplicity one for generic}) confirms this. Given a principal series $\Pi$ of $\GL_{n}(F)$ and an irreducible generic representation $\pi$ of $\GL_{n-1}(F)$, we introduce the notion of a ``good pair" $(\Pi,\pi)$. We point to Section \ref{section four} for a precise definition of this notion. To define the notion of a good pair, we give a condition on $\pi$ and a condition on $\Pi$. Roughly speaking, the condition on the principal series representation $\Pi$ of $\GL_{n}(F)$ is that certain cuspidal factors in the principal series do not occur in an ordering that is completely opposite to the standard ordering. Our first main result (Theorem \ref{multiplicity one for generic}) says that if ($\Pi$,$\pi$) is a good pair then $\Hom$ multiplicity one holds. As a consequence, we are able to completely determine the dimension of $\Hom_{\GL_{n-1}(F)}(\Pi,\pi)$ when $\pi$ is essentially square integrable (see Theorems \ref{multiplicity for steinberg} and \ref{multiplicity for generalized steinberg}).

We now state the main results of this paper. For $F$ a non-archimedean local field and for an integer $n\geq 0$, let $\G_n=\GL_{n}(F)$. Let $\nu(g) = |\det(g)|_F$ for $g\in \G_n$. We refer to Section \ref{section two} for more notations and terminology.

\begin{theorem} \label{multiplicity one for generic}

Let $n\geq 2$ be an integer. Let $\pi$ be an irreducible generic representation of $\G_{n-1}$. Let $\Pi$ be a principal series representation of $\G_n$ parabolically induced from an irreducible cuspidal representation of a Levi subgroup. If $(\Pi,\pi)$ is a good pair, then
\[ \dim \Hom_{\G_{n-1}}(\Pi,\pi) = 1,\] and for all integers $i\geq 1$,
\[ \Ext^i_{\G_{n-1}}(\Pi,\pi) = 0.\]

\end{theorem}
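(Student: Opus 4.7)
The plan is to compute $\Ext^i_{\G_{n-1}}(\Pi,\pi)$ via the Bernstein--Zelevinsky filtration of $\Pi$ restricted to the mirabolic subgroup $P_n$ of $\G_n$, combined with the Leibniz rule for derivatives of parabolic inductions and the fact that a generic representation has a particularly simple derivative structure.

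First, I would restrict $\Pi$ to $P_n$ and use the BZ filtration whose successive quotients have the form $(\Phi^+)^{k-1}\Psi^+(\Pi^{(k)})$, where $\Pi^{(k)}$ is the $k$-th BZ derivative. Applying $\Hom_{\G_{n-1}}(-,\pi)$ produces a spectral sequence whose $E_1$-terms are $\Ext$-groups between the derivatives $\Pi^{(k)}$ and derivatives of $\pi$. Since $\pi$ is generic, its highest derivative $\pi^{(n-1)}$ equals $\mathbb{C}$ and, via Bernstein's second adjointness and the usual Whittaker--functor formalism, only this top layer can contribute; the spectral sequence thus collapses to a single row and the problem is reduced to understanding $\Ext^\ast_{\G_{n-1}}$ between the top BZ-layer of $\Pi$ and $\pi$.

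Next, I would unwind $\Pi^{(k)}$ using the Leibniz rule: writing $\Pi=\rho_1\times\cdots\times\rho_r$ with each $\rho_i$ an irreducible cuspidal (up to twist), the geometric lemma refinement expresses the relevant layer as an iterated extension of subquotients indexed by which $\rho_i$ gets cut in half, together with a Weyl-element reordering of the remaining $\rho_j$. The good pair hypothesis on $(\Pi,\pi)$ is tailored so that all but one of these subquotients are eliminated: either their cuspidal support is incompatible with $\pi$ and so kills $\Ext^\ast$ on central-character/cuspidal-support grounds, or they fit inside the standard-module/dual-standard-module framework of \cite{ch23}, from which the required $\Ext$-vanishing and $\Hom$ one-dimensionality can be transported. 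The single surviving subquotient produces exactly the one-dimensional $\Hom$ and no higher $\Ext$.

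The main obstacle is the last step: verifying that the good pair condition matches the output of the geometric lemma precisely enough that only one subquotient survives and that each of the ``leftover'' subquotients really does fit into one of the two vanishing regimes above. To propagate Ext-vanishing across degrees I anticipate using Bernstein's cohomological duality coupled with the Zelevinsky--Aubert involution, which converts submodule statements into quotient statements and shifts $\Ext^i$ to $\Ext^{d-i}$; and I would invoke \cite{ch23} as a black box for the standard-module case, reducing the general good pair situation to it by induction on the number of cuspidal factors of $\Pi$.
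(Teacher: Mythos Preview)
There is a genuine gap. Your assertion that ``only this top layer can contribute'' in the Bernstein--Zelevinsky filtration is exactly what fails when $\csupp_{\mathbb{Z}}(\nu^{1/2}\Pi)$ meets $\csupp_{\mathbb{Z}}(\pi)$: the intermediate pieces $\Ext^i(\nu^{1/2}\Pi^{(k)},{}^{(k-1)}\pi)$ for $1\le k<n$ need not vanish, and handling them is the entire content of the theorem. Your proposed rescue --- that the surviving Leibniz subquotients either die on cuspidal-support grounds or fall into the standard/dual-standard framework of \cite{ch23} --- does not match what the good-pair condition actually says. That condition is an \emph{ordering} constraint on the cuspidal factors of $\Pi$ (certain anti-standard runs are forbidden), and the raw Leibniz rule only gives you a filtration whose graded pieces do not see enough of this ordering to force them into the shape required by \cite{ch23}. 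In particular, different orderings of the same cuspidal factors produce the same semisimplified derivatives, so a direct BZ/Leibniz computation cannot by itself distinguish a good pair from a bad one. The Aubert-involution/duality step you mention does not repair this.

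The paper's mechanism is different and more targeted. It does not analyse all BZ pieces at once; instead it uses Chan's coarser filtration for $\rho_1\times\Pi'$ (the Fourier--Jacobi/Rankin--Selberg filtration of \cite{ch22,ch23}), which peels off a \emph{single} cuspidal factor from one end of $\Pi$ while keeping the remaining factor intact. A second-adjointness computation against the opposite-parabolic Jacquet module of $\pi$ shows that if $\rho_1\neq\nu^{-1/2}a(\Delta_i)$ for every $i$ (resp.\ $\rho_k\neq\nu^{1/2}b(\Delta_i)$), then $\rho_1$ (resp.\ $\rho_k$) can be replaced by any cuspidal off the relevant lines without changing any $\Ext^j$. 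A combinatorial lemma then shows that the good-pair ordering condition guarantees one can always rearrange $\Pi$ so that some end-factor satisfies this replaceability hypothesis. Iterating reduces the overlap $N=|\csupp_{\mathbb{Z}}(\nu^{1/2}\Pi)\cap\csupp_{\mathbb{Z}}(\pi)|$ to zero, at which point your BZ argument \emph{does} work and only the Gelfand--Graev piece survives. The induction is on $N$, not on the number of cuspidal factors, and \cite{ch23} is not invoked.
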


\begin{remark}
The above result is proved by sequentially replacing those cuspidal factors in $\Pi$ that occur in the cuspidal support of $\nu^{-1/2}\pi$. A combinatorial argument allows us to replace the cuspidal factors of $\Pi$ one at a time by repeated applications of Lemmas \ref{reduction lemma one} and \ref{reduction lemma two}. The proofs of Lemmas \ref{reduction lemma one} and \ref{reduction lemma two} in turn crucially depend on a filtration for parabolically induced modules introduced by K.Y. Chan in the work \cite{ch22} which we recall in Section \ref{section three}. The filtration in Lemma \ref{filtration for parabolically induced modules} stated in terms of the Fourier-Jacobi and Rankin-Selberg models is coarser than the Bernstein-Zelevinsky filtration but is much more useful for our purposes. We take advantage of the Jacquet module structure of a generic representation (with respect to the opposite parabolic subgroup) to establish Lemmas \ref{reduction lemma one} and \ref{reduction lemma two}.   
\end{remark}

The above theorem allows us to completely determine the dimension of $\Hom_{\G_{n-1}(F)}(\Pi,\pi)$ when $\pi$ is essentially square integrable. We obtain part (A) of Theorem \ref{multiplicity for steinberg} and Theorem \ref{multiplicity for generalized steinberg} as a simple consequence of the above theorem.

\begin{theorem} \label{multiplicity for steinberg}

Let $n\geq 2$ be an integer. Let $\Pi$ be a principal series representation of $\G_n$ parabolically induced from an irreducible cuspidal representation of a Levi subgroup. Let $\St_{n-1}$ denote the Steinberg representation of $\G_{n-1}$. 

\begin{itemize}
    
\item[(A)] If $\Pi \neq \nu^{-(\frac{n-1}{2})} \times 
\nu^{-(\frac{n-3}{2})} \times \ldots \times \nu^{(\frac{n-1}{2})}$, then 
\[ \dim \Hom_{\G_{n-1}}(\Pi,\St_{n-1}) = 1.\] Moreover, if $\Pi \neq \nu^{-(\frac{n-1}{2})} \times 
\nu^{-(\frac{n-3}{2})} \times \ldots \times \nu^{(\frac{n-1}{2})}$, then 
\[ \Ext^i_{\G_{n-1}}(\Pi,\St_{n-1}) = 0,\] for all integers $i\geq 1$.
               
\item[(B)] If $\Pi = \nu^{-(\frac{n-1}{2})} \times 
\nu^{-(\frac{n-3}{2})} \times \ldots \times \nu^{(\frac{n-1}{2})}$, then 
\[ \dim \Hom_{\G_{n-1}}(\Pi,\St_{n-1}) = n.\]  

\end{itemize}
\end{theorem}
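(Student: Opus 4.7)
My plan for Part (A) is to invoke Theorem \ref{multiplicity one for generic} directly. The Steinberg $\St_{n-1}$ is essentially square integrable and in particular generic, with cuspidal support $\{\nu^{-(n-2)/2}, \nu^{-(n-4)/2}, \ldots, \nu^{(n-2)/2}\}$; consequently the cuspidal support of $\nu^{-1/2}\St_{n-1}$ is $\{\nu^{-(n-1)/2}, \nu^{-(n-3)/2}, \ldots, \nu^{(n-3)/2}\}$. The ``good pair'' condition of Section \ref{section four} restricts how these cuspidal characters may occur as factors of $\Pi$, and it is violated precisely when they appear as the initial $n-1$ factors of $\Pi$ in the fully reversed (increasing) order together with $\nu^{(n-1)/2}$ at the end, that is, exactly when $\Pi$ is the excluded principal series. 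For every other $\Pi$ in the statement the pair $(\Pi, \St_{n-1})$ is good, and Theorem \ref{multiplicity one for generic} yields both $\dim\Hom_{\G_{n-1}}(\Pi,\St_{n-1}) = 1$ and $\Ext^i_{\G_{n-1}}(\Pi,\St_{n-1}) = 0$ for $i \geq 1$.

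For Part (B) I would argue by induction on $n$. The base case $n=2$ is verified directly: for $\Pi = \nu^{-1/2}\times\nu^{1/2}$, the composition series $0 \to \mathbf{1}_{\GL_2} \to \Pi \to \St_2 \to 0$ together with the long exact sequence obtained by applying $\Hom_{F^\times}(-,\mathbf{1}_{F^\times})$ reduces the problem to the fact that $\Hom_{F^\times}(\St_2,\mathbf{1}_{F^\times})$ and $\Hom_{F^\times}(\mathbf{1}_{\GL_2},\mathbf{1}_{F^\times})$ are each one-dimensional (the former via the Kirillov model $\St_2|_{P_2} \cong \mathrm{ind}_{U_2}^{P_2}\psi$ and Frobenius reciprocity for compact induction) and that $\Ext^1_{F^\times}(\St_2,\mathbf{1}_{F^\times}) = 0$, giving $\dim\Hom_{F^\times}(\Pi,\mathbf{1}_{F^\times}) = 2$.

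For the inductive step, I decompose $\Pi = \nu^{-(n-1)/2}\times \Pi'$, where $\Pi' := \nu^{-(n-3)/2} \times \ldots \times \nu^{(n-1)/2}$ satisfies $\nu^{-1/2}\Pi' = \nu^{-(n-2)/2} \times \ldots \times \nu^{(n-2)/2} = \Pi_{n-1}^{\mathrm{bad}}$, the exceptional principal series of $\G_{n-1}$. Applying the filtration of Lemma \ref{filtration for parabolically induced modules} to $\Pi|_{\G_{n-1}}$ produces graded pieces of Fourier-Jacobi and Rankin-Selberg type. Taking $\Hom(-,\St_{n-1})$ and invoking Frobenius reciprocity together with the Jacquet module structure of $\St_{n-1}$, one graded piece reduces, after undoing the $\nu^{1/2}$-twist, to $\Hom_{\G_{n-2}}(\Pi_{n-1}^{\mathrm{bad}}, \St_{n-2})$, which equals $n-1$ by the inductive hypothesis. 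The complementary graded piece forms a non-exceptional good pair at level $n-1$, so Part (A) contributes an additional $1$ and simultaneously ensures that the connecting $\Ext^1$ term in the long exact sequence vanishes. Summing the two contributions yields $\dim\Hom_{\G_{n-1}}(\Pi,\St_{n-1}) = (n-1) + 1 = n$.

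The main obstacle is the combinatorial bookkeeping: one must match each graded piece of the filtration to its correct role (inductive piece versus Part (A) piece), track the $\nu^{1/2}$-twist so that the inductive hypothesis applies to the right object, and confirm the vanishing of the relevant $\Ext^1$ terms by invoking Part (A) at level $n-1$. The Jacquet module structure of $\St_{n-1}$ on the pertinent maximal parabolics enters at each step and must be handled delicately, since the cuspidal supports of $\Pi$, of $\Pi'$ and of $\St_{n-1}$ interact non-trivially and any residual non-vanishing $\Ext^1$ would shift the count and break the inductive equality $\dim\Hom = n$.
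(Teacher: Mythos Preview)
Your Part (A) is correct and matches the paper: the only principal series bad to $\St_{n-1}$ is the excluded one, so Theorem \ref{multiplicity one for generic} applies everywhere else.

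For Part (B), your filtration argument gives only the upper bound $\dim\Hom \leq n$; the equality does not follow. Concretely, with $\pi_1=\nu^{-(n-1)/2}$ and $\pi_2=\Pi'$, Lemma \ref{filtration for parabolically induced modules} yields
\[
0 \to \Pi'\otimes\zeta^F \to \Pi|_{\G_{n-1}} \to \nu^{-(n-2)/2}\times \Pi'|_{\G_{n-2}} \to 0,
\]
so the Fourier--Jacobi piece is the \emph{submodule} and the inductive piece is the \emph{quotient}. The connecting map in the long exact sequence lands in $\Ext^1_{\G_{n-1}}\bigl(\nu^{-(n-2)/2}\times \Pi'|_{\G_{n-2}},\,\St_{n-1}\bigr)$. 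By second adjointness and K\"unneth this contains $\Ext^1_{\G_1}(\nu^{-(n-2)/2},\nu^{-(n-2)/2})\otimes \Hom_{\G_{n-2}}(\xi(n-1),\St_{n-2})$, and the first factor is $\Hom_{\mathrm{sm}}(F^{\times},\mathbb{C})\neq 0$ while the second is $(n-1)$-dimensional by induction. So this $\Ext^1$ is nonzero, and Part (A) says nothing about it: the quotient piece involves the \emph{exceptional} principal series $\xi(n-1)$ at level $n-1$, not a good pair. The $\Ext$-vanishing you can extract from Part (A) (via the $\chi$-trick, which you should spell out) is for the Fourier--Jacobi piece $\Pi'\otimes\zeta^F$, and that sits on the wrong side of the connecting map to help.

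The paper closes this gap by proving the lower bound separately: Mackey theory for the $(1,n-1)$ parabolic gives a surjection
\[
\Pi|_{\G_{n-1}} \twoheadrightarrow \bigl(\nu^{-(n-2)/2}\times \Pi'|_{\G_{n-2}}\bigr)\ \oplus\ \xi(n-1),
\]
and the two summands contribute $(n-1)$ and $1$ respectively to $\Hom(-,\St_{n-1})$, yielding $\dim\Hom\geq n$. You need either this second filtration or some independent argument that the connecting map vanishes; the latter is true a posteriori but is not delivered by Part (A).
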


The next theorem is the analogue of Theorem \ref{multiplicity for steinberg} for the generalized Steinberg representation. In contrast to Theorem \ref{multiplicity for steinberg}, in this case higher multiplicity does not occur, and $\Hom$ multiplicity one and higher $\Ext$ vanishing always holds.

\begin{theorem} \label{multiplicity for generalized steinberg}

Let $k,m \in \mathbb{Z}_{>0}$ such that $k\geq 2$ and let $n = km +1$. Let $\rho$ be a cuspidal representation of $\G_k$ and consider the segment $\Delta = [\rho,\nu^{m-1}\rho]$. Let $\Pi$ be a principal series representation of $\G_n$ parabolically induced from an irreducible cuspidal representation of a Levi subgroup. Then, 
\[ \dim \Hom_{\G_{n-1}}(\Pi,Q(\Delta)) = 1.\]
Moreover for all integers $i\geq 1$,
\[ \Ext^i_{\G_{n-1}}(\Pi,Q(\Delta)) = 0.\]

\end{theorem}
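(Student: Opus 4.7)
The plan is to derive Theorem \ref{multiplicity for generalized steinberg} directly from Theorem \ref{multiplicity one for generic}. The representation $Q(\Delta)$ is essentially square integrable, hence an irreducible generic representation of $\G_{n-1}=\G_{km}$. Consequently, once I verify that the pair $(\Pi,Q(\Delta))$ is a good pair in the sense of Section \ref{section four}, both the multiplicity one statement and the higher $\Ext$-vanishing follow at once.

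To verify the good-pair condition I rely on a divisibility obstruction. The cuspidal support of $Q(\Delta)$ is $\{\rho,\nu\rho,\ldots,\nu^{m-1}\rho\}$, where each $\nu^i\rho$ is a cuspidal representation of $\G_k$. The sensitive cuspidal factors of $\Pi$, i.e.\ those governed by the ordering condition in the definition of a good pair, must therefore be cuspidals of $\G_k$ lying in the inertial class of $\rho$. Since $\Pi$ is parabolically induced from an irreducible cuspidal representation of a Levi subgroup of $\G_n$, the sizes of its cuspidal factors form a composition of $n = km+1$. Because $k \geq 2$, the integer $km+1$ is not divisible by $k$, so at least one cuspidal factor of $\Pi$ has size different from $k$; this factor lies outside the inertial class of $\rho$ and serves as a \emph{safety factor}, insensitive to the ordering condition.

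With this safety factor in hand, one checks that $\Pi$ cannot sit in the totally reversed configuration that would violate the good-pair condition, irrespective of how the remaining sensitive cuspidal factors of $\Pi$ are arranged. This stands in sharp contrast to the Steinberg case of Theorem \ref{multiplicity for steinberg}, where $k=1$ and every integer is trivially divisible by $k$, so no safety factor is forced and the exceptional higher multiplicity configuration $\Pi=\nu^{-(\frac{n-1}{2})}\times\cdots\times\nu^{(\frac{n-1}{2})}$ is realisable. The main obstacle in the argument is precisely the matching of the combinatorial consequence of the divisibility obstruction with the precise formulation of the good-pair condition from Section \ref{section four} — that is, confirming that a single safety factor of size $\neq k$ is enough to break any reversed ordering among the sensitive factors. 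Once that matching is in place, Theorem \ref{multiplicity one for generic} yields $\dim\Hom_{\G_{n-1}}(\Pi,Q(\Delta))=1$ and $\Ext^i_{\G_{n-1}}(\Pi,Q(\Delta))=0$ for all $i\geq 1$, completing the proof.
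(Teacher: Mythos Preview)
Your approach is correct and matches the paper's: verify that $(\Pi, Q(\Delta))$ is a good pair and then invoke Theorem \ref{multiplicity one for generic}. However, your ``safety factor'' framing is more roundabout than necessary, and you explicitly leave the decisive step (``one checks that $\Pi$ cannot sit in the totally reversed configuration'') un-executed, even calling it ``the main obstacle''. The paper dispatches this in one line by a direct count: for $\Pi$ to be bad to $\Delta = [\rho,\nu^{m-1}\rho]$, the definition requires $m+1$ cuspidal factors $\nu^{-1/2}\rho,\nu^{1/2}\rho,\ldots,\nu^{m-1/2}\rho$ to occur among the $\rho_i$'s, each a representation of $\G_k$; this forces $n \geq k(m+1) = km + k$, contradicting $n = km+1$ since $k\geq 2$. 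Your divisibility observation (that $km+1$ is not a multiple of $k$, so some factor has size $\neq k$) is a consequence of this count rather than its engine: the point is not that a stray factor ``breaks'' an ordering, but that $\Pi$ simply cannot contain $m+1$ cuspidal factors of size $k$ at all, in any order. Replacing your safety-factor paragraph with this one-line inequality closes the gap and recovers exactly the paper's proof.
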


\begin{remark}
Note that just like the result of K.Y. Chan for standard representations both part (A) of Theorem \ref{multiplicity for steinberg} and Theorem \ref{multiplicity for generalized steinberg} are compatible with the Euler-Poincare pairing formula of Dipendra Prasad introduced in \cite{pr18}. If $\pi_1$ and $\pi_2$ are smooth finite-length representations of $\G_n$ and $\G_{n-1}$ respectively, we have the following Euler-Poincare pairing formula (\cite[Theorem 4.2]{pr18}):
\[ \dim \Wh(\pi_1).\dim \Wh(\pi_2) = \sum_{i\geq 0} (-1)^
{i} \dim \Ext^{i}_{\G_{n-1}}(\pi_1,\pi_2), \] where $\Wh(\pi_1)$ and $\Wh(\pi_2)$ are the spaces of Whittaker models for $\pi_1$ and $\pi_2$ respectively.    
\end{remark}

\begin{remark}
Since the multiplicity one property holds for irreducible representations (see \cite{agrs10}), we may conclude by part (B) of Theorem \ref{multiplicity for steinberg} that there are at least $n$ irreducible subquotients of $\Pi = \nu^{-(\frac{n-1}{2})} \times 
\nu^{-(\frac{n-3}{2})} \times \ldots \times \nu^{(\frac{n-1}{2})}$ that have $\St_{n-1}$ as a quotient upon restriction. These $n$ irreducible subquotients also occur in other principal series obtained by permuting the factors of $\Pi$ but in that case, their contribution does not lead to higher multiplicity as seen in part (A) of Theorem \ref{multiplicity for steinberg}. We may ask whether there are more than $n$ irreducible subquotients of $\Pi$ that have $\St_{n-1}$ as a quotient upon restriction. Our next result says that there are exactly $n$ irreducible subquotients of $\Pi$ that have $\St_{n-1}$ as a quotient upon restriction to $\GL_{n-1}(F)$.
\end{remark}

The following theorem explains the occurrence of the multiplicity $n$ in part (B) of Theorem \ref{multiplicity for steinberg}. It proves that there are exactly $n$ irreducible subquotients of $\Pi = \nu^{-(\frac{n-1}{2})} \times \nu^{-(\frac{n-3}{2})} \times \ldots \times \nu^{(\frac{n-1}{2})}$  (which has $2^{n-1}$ subquotients) which have a surjective map onto $\St_{n-1}$.

\begin{theorem} \label{list}

Let $n\geq 2$ be an integer. Let $\Pi = \nu^{-(\frac{n-1}{2})} \times \nu^{-(\frac{n-3}{2})} \times \ldots \times \nu^{(\frac{n-1}{2})}\in \Alg(\G_n)$. There are exactly $n$ irreducible subquotients $\pi_0,\pi_1,\ldots,\pi_{n-1}$ of $\Pi$ such that $\Hom_{\G_{n-1}}(\pi_i,\St_{n-1}) = \mathbb {C}$. The complete list of such irreducible subquotients in terms of the Zelevinsky classification is the following: 
\begin{itemize}
    \item $\pi_0 = Z(\Delta_1,\ldots,\Delta_n) = \St_n$ where $\Delta_j = \{\nu^{(\frac{n-2j+1}{2})}\}$ for $j=1,2,\ldots,n$.
    \item For all $i=1,2,\ldots, (n-1)$ we have $\pi_i = Z(\Delta_{i1},\Delta_{i2},\ldots,\Delta_{i(n-1)})$ where,
    
    $\Delta_{ij} = 
    \begin{cases}
    \{ \nu^{(\frac{n-2j+1}{2})} \}, & \text{ for } j=1,2,\ldots,(i-1) \\
    \{ \nu^{(\frac{n-2j-1}{2})} , \nu^{(\frac{n-2j+1}{2})} \}, & \text{ for } j=i \\
    \{ \nu^{(\frac{n-2j-1}{2})} \}, & \text{for } j=(i+1),\ldots,(n-1).
    \end{cases}
     $
\end{itemize}
Thus for all $i=1,2,\ldots,(n-1)$, $\pi_i$ is the unique irreducible submodule of the principal series $\nu^{a_i} \St_{i-1} \times \nu^{b_i}\mathbbm{1}_2 \times \nu^{c_i} \St_{n-i-1}$, where the integers $a_i$, $b_i$ and $c_i$ are, \[ a_i = \frac{(n-i+1)}{2}, b_i=\frac{(n-2i)}{2}, c_i = \frac{-(i+1)}{2}. \] Here, $\mathbbm{1}_2$ denotes the trivial representation of $\GL_2(F)$. 
    
\end{theorem}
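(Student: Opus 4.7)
The plan is to combine Theorem~\ref{multiplicity for steinberg}(B) and the AGRS multiplicity one bound \cite{agrs10} with the paper's earlier classification (announced in the abstract) of irreducible non-generic representations of $\G_n$ whose restriction to $\G_{n-1}$ admits $\St_{n-1}$ as a quotient.

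\medskip

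By Theorem~\ref{multiplicity for steinberg}(B), $\dim \Hom_{\G_{n-1}}(\Pi,\St_{n-1}) = n$. Since $\Hom_{\G_{n-1}}(-,\St_{n-1})$ is left exact, applying it along a composition series of $\Pi$ gives
\[ n = \dim \Hom_{\G_{n-1}}(\Pi,\St_{n-1}) \leq \sum_{\pi} \dim \Hom_{\G_{n-1}}(\pi,\St_{n-1}), \]
the sum running over the Jordan--H\"older constituents of $\Pi$. By \cite{agrs10} each summand is at most $1$, so at least $n$ irreducible subquotients of $\Pi$ must surject onto $\St_{n-1}$.

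\medskip

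I would then verify that the $n$ representations $\pi_0, \ldots, \pi_{n-1}$ listed in the theorem are indeed subquotients of $\Pi$ and that each surjects onto $\St_{n-1}$. Membership in the composition series of $\Pi$ is immediate from the Zelevinsky classification, since each multisegment $\{\Delta_{i1}, \ldots, \Delta_{i(n-1)}\}$ is a partition of the totally ordered cuspidal support $\{\nu^{-(n-1)/2}, \ldots, \nu^{(n-1)/2}\}$ of $\Pi$ into consecutive sub-segments. For $\pi_0 = \St_n$, the surjection $\St_n|_{\G_{n-1}} \twoheadrightarrow \St_{n-1}$ is classical and can be extracted from the Bernstein--Zelevinsky filtration. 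For $1 \leq i \leq n-1$, I would use the realization $\pi_i \hookrightarrow \nu^{a_i}\St_{i-1} \times \nu^{b_i}\mathbbm{1}_2 \times \nu^{c_i}\St_{n-i-1}$ and apply the geometric lemma to the right-hand side to exhibit a nontrivial map to $\St_{n-1}$ factoring through $\pi_i$, using that the outer $\St_{i-1}$ and $\St_{n-i-1}$ factors already carry the appropriate derivative structure and the inner $\mathbbm{1}_2$ contributes the required length-two segment of $\St_{n-1}$.

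\medskip

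To rule out any further contributing subquotient, I would invoke the paper's classification of non-generic irreducibles of $\G_n$ admitting $\St_{n-1}$ as a restriction quotient. The only generic composition factor of $\Pi$ is $\St_n = \pi_0$, and the classification applied to the remaining $2^{n-1}-1$ non-generic composition factors isolates exactly $\pi_1, \ldots, \pi_{n-1}$; combined with the lower bound of $n$ from the first step this pins down the list. The main obstacle is precisely this last step: one needs a sharp combinatorial criterion on multisegments $\mathfrak{m}$ partitioning the cuspidal support of $\Pi$ characterising when $Z(\mathfrak{m})$ surjects onto $\St_{n-1}$ upon restriction, with the expected answer being that $\mathfrak{m}$ must consist entirely of singletons, or of singletons together with exactly one length-two segment. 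Establishing this criterion --- likely through an analysis of the Bernstein--Zelevinsky filtration of $Z(\mathfrak{m})$ and its interaction with the top derivative of $\St_{n-1}$ --- is the technical heart of the proof.
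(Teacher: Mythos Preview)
Your strategy is close to the paper's, but the logic is inverted in a way that creates extra work. The paper does \emph{not} verify directly that each $\pi_i\twoheadrightarrow\St_{n-1}$. Instead it first proves an upper bound: if $\pi=Z(\Delta_1,\ldots,\Delta_k)$ is any irreducible subquotient of $\Pi$ with $\Hom_{\G_{n-1}}(\pi,\St_{n-1})\neq 0$, then by the Gelfand--Kazhdan involution also $\Hom_{\G_{n-1}}(\pi^\vee,\St_{n-1})\neq 0$, and since $\pi^\vee$ is a \emph{quotient} of the product $Z(\Delta_1^\vee)\times\cdots\times Z(\Delta_k^\vee)$ one obtains $\Hom_{\G_{n-1}}(Z(\Delta_1^\vee)\times\cdots\times Z(\Delta_k^\vee),\St_{n-1})\neq 0$. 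On this product the Leibniz rule for derivatives applies directly, and two short propositions (Propositions~\ref{proposition for generic} and~\ref{proposition for list}) force every $\Delta_j$ to have relative length at most $2$ with at most one of length exactly $2$ --- precisely the criterion you anticipated. This yields at most $n$ candidates; combined with your lower bound of at least $n$ (from $\dim\Hom=n$ and AGRS), the listed $\pi_i$ are forced to be exactly the contributors, and the surjections $\pi_i\twoheadrightarrow\St_{n-1}$ come for free. Your middle step constructing these surjections by hand is therefore unnecessary, and the delicate issue of whether a map from $\nu^{a_i}\St_{i-1}\times\nu^{b_i}\mathbbm{1}_2\times\nu^{c_i}\St_{n-i-1}$ to $\St_{n-1}$ factors through the irreducible submodule $\pi_i$ is bypassed entirely.

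Two further remarks. First, the passage to $\pi^\vee$ is the key device: it replaces the irreducible $Z(\mathfrak m)$ by a product of $Z(\Delta_j^\vee)$'s, where the derivative computation is tractable; attacking $Z(\mathfrak m)$ directly, as your final paragraph suggests, would be substantially harder. Second, be careful not to invoke the classification of non-generic irreducibles admitting $\St_{n-1}$ as a quotient (Theorem~\ref{list of non generic}): in the paper that result is deduced \emph{from} Theorem~\ref{list}, so citing it here would be circular.
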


\begin{remark}
One can write down the Langlands parameters of the irreducible representations $\pi_1,\ldots,\pi_{n-1}$ listed in the above theorem as follows. Given an irreducible representation $\tau$ of $\G_n$, let $L(\tau)$ denote its Langlands parameter. Then one can prove that, \[ L(\pi_{n-i}) = L(\nu^{-\frac{(n-i)}{2}}\St_{i}) \oplus L(\nu^{\frac{i}{2}}\St_{n-i}) \] for all $i=1,2,\ldots, (n-1)$. Note that $\pi_i = \pi_{n-i}^{\vee}$ for all $i=1,2,\ldots, (n-1)$.
\end{remark}

\begin{remark}
In order to prove that the representations $\pi_i$ of $\GL_n(F)$ listed in Theorem \ref{list} have $\St_{n-1}$ as a quotient upon restriction to $\GL_{n-1}(F)$ we show that these are the only possible irreducible subquotients of $\Pi$ which can have $\St_{n-1}$ as a quotient upon restriction. We then use the multiplicity one theorem for irreducible representations and part (B) of Theorem \ref{multiplicity for steinberg} to conclude the argument. In Section \ref{section nine}, we give a geometric explanation for why these subquotients of $\Pi$ have $\St_{n-1}$ as a quotient upon restriction.
\end{remark}

It is well known (see \cite[Theorem 3]{pr93}) that given irreducible generic representations $\pi_1$ and $\pi_2$ of $\G_{n}$ and $\G_{n-1}$ respectively, $\Hom_{\G_{n-1}}(\pi_1,\pi_2) \neq 0$. Therefore, any irreducible generic representation of $\G_n$ has $\St_{n-1}$ as a quotient upon restriction. The above result gives us some examples of non-generic irreducible representations that have $\St_{n-1}$ as a quotient upon restriction. It turns out that these are the only irreducible non-generic representations that have the Steinberg as a quotient upon restriction to $\G_{n-1}$. On the other hand, there does not exist any non-generic irreducible representation that has a generalized Steinberg of $\G_{n-1}$ as a quotient upon restriction to $\G_{n-1}$. These two claims are the content of our next theorem.

\begin{theorem} \label{list of non generic}
Let $n\geq 2$ be an integer. Let $\pi_1, \pi_2, \ldots ,\pi_{n-1}$ be the irreducible representations of $\G_n$ as defined in Theorem \ref{list}.
\begin{itemize}
    
\item[(A)] Let $\tau$ be a non-generic irreducible representation of $\G_n$ such that, $\Hom_{\G_{n-1}}(\tau,\St_{n-1})\neq 0$. Then $\tau = \pi_{i_0}$ for some $i_0 \in \{ 1, 2, \ldots,$ $(n-1) \}$.
               
\item[(B)] There does not exist any non-generic irreducible representation $\tau$ of $\G_n$ such that $\tau|_{\G_{n-1}}$ has a generalized Steinberg of $\G_{n-1}$ as a quotient.

\end{itemize}

\end{theorem}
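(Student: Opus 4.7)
The plan is to combine Theorems \ref{multiplicity for steinberg} and \ref{multiplicity for generalized steinberg} with Theorem \ref{list} and Prasad's non-vanishing result \cite[Theorem 3]{pr93}. For Part (A), I would realize the non-generic irreducible $\tau$ as the Langlands quotient of a standard principal series $\Pi$ of $\G_n$ induced from an irreducible cuspidal of a Levi. The composition $\Pi \twoheadrightarrow \tau \to \St_{n-1}$ shows $\Hom_{\G_{n-1}}(\Pi, \St_{n-1}) \neq 0$, so Theorem \ref{multiplicity for steinberg} forces a dichotomy: either $\Pi = \Pi_0 := \nu^{-(n-1)/2} \times \nu^{-(n-3)/2} \times \cdots \times \nu^{(n-1)/2}$, in which case $\tau$ is a composition factor of $\Pi_0$ and Theorem \ref{list} identifies $\tau$ with one of $\{\pi_0 = \St_n, \pi_1, \ldots, \pi_{n-1}\}$ (so $\tau = \pi_{i_0}$ for some $i_0 \in \{1, \ldots, n-1\}$ since $\tau$ is non-generic while $\pi_0$ is generic); or $\dim \Hom_{\G_{n-1}}(\Pi, \St_{n-1}) = 1$, a case I would need to eliminate.

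To eliminate the second branch, the unique generic subquotient $\sigma$ of $\Pi$ satisfies $\Hom_{\G_{n-1}}(\sigma, \St_{n-1}) = 1$ by Prasad and \cite{agrs10}. The plan is to show that the unique generator of $\Hom_{\G_{n-1}}(\Pi, \St_{n-1})$ is carried by this generic piece via the Whittaker / Rankin-Selberg construction built from the Whittaker functional on $\Pi$ (unique up to scalar for a principal series induced from a cuspidal), and therefore cannot factor through the non-generic Langlands quotient $\tau$, whose Whittaker space $\Wh(\tau)$ vanishes. This contradicts $\Hom_{\G_{n-1}}(\tau, \St_{n-1}) \neq 0$. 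For Part (B) the same strategy applies with Theorem \ref{multiplicity for generalized steinberg} replacing Theorem \ref{multiplicity for steinberg}; since that theorem gives $\dim \Hom_{\G_{n-1}}(\Pi, Q(\Delta)) = 1$ uniformly for every principal series $\Pi$ (no exceptional case analogous to $\Pi_0$), only the contradiction step is needed.

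The hard part is the rigorous execution of this ``factoring through generic'' principle. I expect the filtration of Lemma \ref{filtration for parabolically induced modules}, stated in terms of Rankin-Selberg and Fourier-Jacobi models, to be the right tool: in the regime where $\dim \Hom_{\G_{n-1}}(\Pi, \pi) = 1$ with $\pi$ irreducible generic, only the Rankin-Selberg piece of the filtration contributes, and this piece is paired against the generic Whittaker model of $\pi$ by the unique Whittaker functional on $\Pi$, which must vanish on every non-generic irreducible quotient.
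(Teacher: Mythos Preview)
Your elimination step is false, and this breaks the entire argument. You claim that when $\dim \Hom_{\G_{n-1}}(\Pi,\St_{n-1})=1$ the unique map is ``carried by the generic piece'' and so cannot factor through a non-generic quotient. But the very representations $\pi_i$ you are trying to characterize give counterexamples. Take $n=3$ and $\tau=\pi_2$. By the remark after Theorem~\ref{list}, $\pi_2$ is the Langlands quotient of $\nu^{1/2}\St_2\times\nu^{-1}$, hence a quotient of $\Pi:=\nu^0\times\nu^1\times\nu^{-1}$. This $\Pi$ is \emph{not} equal to $\Pi_0=\nu^{-1}\times\nu^0\times\nu^1$, so Theorem~\ref{multiplicity for steinberg}(A) gives $\dim\Hom_{\G_2}(\Pi,\St_2)=1$. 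Since $\Hom_{\G_2}(\pi_2,\St_2)\neq 0$ and $\Pi\twoheadrightarrow\pi_2$, the unique $\G_2$-map $\Pi\to\St_2$ factors through the non-generic $\pi_2$, not through the generic constituent. Your dichotomy ``$\Pi=\Pi_0$ or contradiction'' therefore fails: one can land in the second branch with $\tau$ perfectly legitimate, and your proposed contradiction does not materialize. The same obstruction applies to Part~(B): you have no independent argument that the unique map factors through the generic piece, and invoking that principle to prove Part~(B) is circular.

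The paper's proof proceeds entirely differently. It works directly with $\tau=Z(\Delta_1,\ldots,\Delta_k)$ and applies \emph{both} the right and left Bernstein--Zelevinsky filtrations (Lemma~\ref{left right bz filtration}, parts (A) and (B)) to $\Hom_{\G_{n-1}}(\tau^\vee,Q(\Delta)^\vee)$. Each filtration forces a different half of the set $\{\nu^{-1/2}\rho,\nu^{1/2}\rho,\ldots,\nu^{s+1/2}\rho\}$ into $\bigcup\Delta_q$; combining them pins down the full cuspidal support of $\tau$ and yields the inequality $n\geq (n-1)+r$. For $r\geq 2$ this is already a contradiction (Part~(B)); for $r=1$ it forces $\csupp(\tau)$ to equal that of $\Pi_0$, so $\tau$ is a subquotient of $\Pi_0$ and Theorem~\ref{list} finishes. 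The essential idea you are missing is this simultaneous use of left and right derivatives to squeeze the cuspidal support from both ends.
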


The proof of the above theorem is obtained by combining information obtained from the left and right Bernstein-Zelevinsky filtrations. The notion of left Bernstein-Zelevinsky derivatives was introduced in the work \cite{cs21}.

In Theorem \ref{multiplicity for steinberg}, there is a sudden jump in multiplicity from $1$ to $n$. One may ask whether each of the intermediate multiplicities $2, 3, \ldots,(n-1)$ is also attained for some principal series representation. In the next theorem, we exhibit specific principal series for which each of the intermediate multiplicities is attained. Naturally many other principal series representations may also attain these intermediate multiplicities and we are only exhibiting some specific examples.

\begin{theorem} \label{intermediate multiplicities}

Let $n\geq 3$ be an integer. For each integer $i = 1, 2,3, \ldots, (n-2)$, consider the segment $\Delta_i = [\nu^{-(\frac{n-1}{2})}, \nu^{-(\frac{n-1-2i}{2})}]$. Let $\Pi_i \in \Alg(\G_n)$ denote the principal series representation,
\[ \Pi_i = Q(\Delta_i) \times \nu^{-(\frac{n-3-2i}{2})} \times \nu^{-(\frac{n-5-2i}{2})} \times \ldots \times \nu^{(\frac{n-1}{2})}. \]
Then for all $i = 1, 2, \ldots, (n-2)$,
\[ \dim \Hom_{\G_{n-1}}(\Pi_i,\St_{n-1}) = n - i. \]
    
\end{theorem}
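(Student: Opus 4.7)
The proof proceeds by descending induction on $i$, starting from the auxiliary base case $i = n - 1$ (where $\Pi_{n-1} = Q(\Delta_{n-1}) = \St_n$, so $\dim\Hom_{\G_{n-1}}(\St_n, \St_{n-1}) = 1$ with vanishing higher Ext groups, e.g.\ by Theorem \ref{multiplicity one for generic} applied to the trivial good pair) and descends to $i = 1$.

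For the inductive step from $i+1$ to $i$, the segments $\Delta_i$ and $\{\nu^{-(n-3-2i)/2}\}$ are linked, with concatenation $\Delta_{i+1}$, so Zelevinsky's theory of linked segments provides the short exact sequence
\[
0 \to Q(\Delta_{i+1}) \to Q(\Delta_i) \times \nu^{-(n-3-2i)/2} \to T_i \to 0,
\]
where $T_i$ denotes the \emph{unmerged} irreducible corresponding to the multisegment $\{\Delta_i, \{\nu^{-(n-3-2i)/2}\}\}$. Parabolic induction by $\sigma_{i+1} := \nu^{-(n-5-2i)/2} \times \cdots \times \nu^{(n-1)/2}$, combined with $\sigma_i = \nu^{-(n-3-2i)/2} \times \sigma_{i+1}$, gives
\[
0 \to \Pi_{i+1} \to \Pi_i \to Y_i \to 0, \qquad Y_i := T_i \times \sigma_{i+1}.
\]
Taking $\Hom_{\G_{n-1}}(-, \St_{n-1})$ and invoking the inductive hypothesis $\dim\Hom_{\G_{n-1}}(\Pi_{i+1}, \St_{n-1}) = n-i-1$ with vanishing higher Ext, the step closes once we establish the auxiliary claim $(\ast)$: $\dim\Hom_{\G_{n-1}}(Y_i, \St_{n-1}) = 1$ and $\Ext^j_{\G_{n-1}}(Y_i, \St_{n-1}) = 0$ for all $j \geq 1$.

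To prove $(\ast)$, we compose the reversed Zelevinsky embedding $T_i \hookrightarrow \nu^{-(n-3-2i)/2} \times Q(\Delta_i)$ with the standard embedding of the essentially square-integrable representation $Q(\Delta_i)$ into $\nu^{-(n-1)/2} \times \nu^{-(n-3)/2} \times \cdots \times \nu^{-(n-1-2i)/2}$, and then induce by $\sigma_{i+1}$ to obtain an embedding $Y_i \hookrightarrow \widetilde\Pi_i$ where
\[
\widetilde\Pi_i := \nu^{-(n-3-2i)/2} \times \nu^{-(n-1)/2} \times \nu^{-(n-3)/2} \times \cdots \times \nu^{-(n-1-2i)/2} \times \sigma_{i+1}
\]
is a principal series induced from cuspidal characters of $\GL_1$. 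The leading character $\nu^{-(n-3-2i)/2}$ is \emph{displaced}: its exponent strictly exceeds each of the next $i+1$ exponents, which breaks the strictly increasing (most-opposite) ordering of cuspidal factors lying in the cuspidal support of $\nu^{-1/2}\St_{n-1}$. Therefore $(\widetilde\Pi_i, \St_{n-1})$ is a good pair in the sense of Section \ref{section four}, and Theorem \ref{multiplicity one for generic} yields $\dim\Hom_{\G_{n-1}}(\widetilde\Pi_i, \St_{n-1}) = 1$ together with vanishing higher Ext. Pulling this information back along the two embeddings via the associated long exact sequences, and controlling the intermediate cokernels — each of which either reduces to the inductive hypothesis on some $\Pi_{i'}$ with $i' > i$ or forms another good pair against $\St_{n-1}$ — yields $(\ast)$.

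\textbf{Main obstacle.} The principal difficulty lies in the cokernel analysis within $(\ast)$: verifying vanishing of $\Hom$ and all higher $\Ext$ groups against $\St_{n-1}$ for the cokernels of the two successive embeddings $Y_i \hookrightarrow \nu^{-(n-3-2i)/2} \times Q(\Delta_i) \times \sigma_{i+1} \hookrightarrow \widetilde\Pi_i$. The key combinatorial observation that makes this feasible is that the displaced character $\nu^{-(n-3-2i)/2}$ remains at the front of every intermediate principal series encountered in the reduction, so the good pair property of Section \ref{section four} is preserved throughout, allowing repeated application of Theorem \ref{multiplicity one for generic}.
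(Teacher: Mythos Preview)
Your inductive strategy has a fatal internal inconsistency: the auxiliary claim $(\ast)$ cannot hold. The representation $T_i$ is the \emph{non-generic} constituent of $Q(\Delta_i)\times\nu^{-(n-3-2i)/2}$ (the generic one is $Q(\Delta_{i+1})$, since $\dim\Wh$ of the product is $1$), so $\dim\Wh(Y_i)=\dim\Wh(T_i)\cdot\dim\Wh(\sigma_{i+1})=0$. The Euler--Poincar\'e formula of \cite{pr18} then forces
\[
\sum_{j\ge 0}(-1)^j\dim\Ext^j_{\G_{n-1}}(Y_i,\St_{n-1})=\dim\Wh(Y_i)\cdot\dim\Wh(\St_{n-1})=0,
\]
which is incompatible with $\dim\Hom=1$ together with vanishing of all higher $\Ext$. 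In fact the same EP computation shows $\Ext^j(\Pi_i,\St_{n-1})\ne 0$ for some $j\ge 1$ whenever $i\le n-2$, so the very inductive hypothesis you carry (``higher $\Ext$ vanishes for $\Pi_{i+1}$'') is already false at the first step $i=n-2$.

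There are also orientation errors that, while fixable, compound the problem. In $Q(\Delta_i)\times\nu^{-(n-3-2i)/2}$ the representation $Q(\Delta_{i+1})$ is the \emph{quotient}, not the submodule: one checks $\Hom(Q(\Delta_{i+1}),Q(\Delta_i)\times\nu^{-(n-3-2i)/2})=0$ via Frobenius reciprocity and the Jacquet module formula before Lemma \ref{second adjointness formula}. Likewise $T_i$ is the Langlands \emph{quotient} of $\nu^{-(n-3-2i)/2}\times Q(\Delta_i)$, not a submodule, and $Q(\Delta_i)$ is the \emph{quotient} (not submodule) of $\nu^{-(n-1)/2}\times\cdots\times\nu^{-(n-1-2i)/2}$ by the paper's convention. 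So you only get a surjection $\widetilde\Pi_i\twoheadrightarrow Y_i$, which yields the upper bound $\dim\Hom(Y_i,\St_{n-1})\le 1$ but gives no control on the kernel's $\Ext$ groups. Finally, the base case cannot literally invoke Theorem \ref{multiplicity one for generic}, since $\St_n$ is not a principal series induced from cuspidals; one needs Chan--Savin projectivity or Chan's standard-module theorem instead.

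The paper avoids these obstructions by not attempting an inductive $\Ext$-vanishing statement at all: it bounds $\dim\Hom(\Pi_i,\St_{n-1})$ from above via the filtration of Lemma \ref{filtration for parabolically induced modules} applied directly to $\Pi_i=Q(\Delta_i)\times\nu^{(i+1)/2}\xi(n-i-1)$ (most pieces vanish by second adjointness; one contributes $n-i-1$ via Theorem \ref{multiplicity for steinberg}(B), one contributes $1$), and from below by the Mackey/geometric-lemma decomposition of \cite[Section 5]{vs13}, again summing to $n-i$.
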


\begin{remark}
The exact multiplicities in part (B) of Theorem \ref{multiplicity for steinberg} and in Theorem \ref{intermediate multiplicities} are proved by obtaining upper and lower bounds. The upper bound on the multiplicity is obtained using the filtration in Lemma \ref{filtration for parabolically induced modules} whereas the lower bound is obtained via Mackey theory (see \cite[Section 5]{vs13}).    
\end{remark}

We observe a curious phenomenon in Theorems \ref{multiplicity for steinberg} and \ref{multiplicity for generalized steinberg}. For a segment say $\Delta =[\rho,\nu^{m-1}\rho]$, the space $\Hom_{\G_{n-1}}(\Pi,Q(\Delta))$ can have dimension greater than 1 only if $n(\rho)=1$. One may wonder whether this phenomenon also occurs if we replace $Q(\Delta)$ with $Z(\Delta)$. As pointed out earlier, by the work of C. G. Venketasubramanian, cf. \cite{vs13}, we know that the space $\Hom_{\G_{n-1}}(\Pi,Z(\Delta))$ can have dimension 2 when $n(\rho)=1$. The content of our next theorem is that analogous to our previous observation, higher multiplicity for the space $\Hom_{\G_{n-1}}(\Pi,Z(\Delta))$ does not occur if $n(\rho)>1$. The theorem is a simple consequence of the multiplicity one theorem for standard representations, (cf. \cite{ch23}) due to K.Y. Chan.

\begin{theorem} \label{multiplicity for ZDelta}

Let $k,m \in \mathbb{Z}_{>0}$ such that $k\geq 2$ and let $n = km +1$. Let $\rho$ be a cuspidal representation of $\G_k$ and consider the segment $\Delta = [\rho,\nu^{m-1}\rho]$. Let $\Pi$ be a principal series representation of $\G_n$ parabolically induced from an irreducible representation of a Levi subgroup. Then, 
\[ \dim \Hom_{\G_{n-1}}(\Pi,Z(\Delta)) \leq 1.\]

\end{theorem}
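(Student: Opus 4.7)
The plan is to combine the embedding of $Z(\Delta)$ into its associated dual standard module with Chan's multiplicity one theorem. In the Zelevinsky classification, $Z(\Delta)$ is the unique irreducible subrepresentation of
\[ \zeta(\Delta) := \rho \times \nu\rho \times \cdots \times \nu^{m-1}\rho \in \Alg(\G_{n-1}), \]
which is exactly the dual (reverse-ordered) form of the standard module $\nu^{m-1}\rho \times \cdots \times \rho$ of $\G_{n-1}$ whose unique Langlands quotient is $Z(\Delta)$. Thus $\zeta(\Delta)$ is a dual of a standard module in the sense of Chan's theorem.

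First I would reduce to the case where $\Pi$ is induced from an irreducible cuspidal representation of a Levi, as explained in the introduction: any irreducible representation of a Levi is a quotient of a cuspidal-induced principal series, and parabolic induction is exact, so surjections on the source of $\Hom$ preserve upper bounds. Writing $\Pi = \rho_1 \times \cdots \times \rho_s$ with each $\rho_i$ cuspidal, the embedding $Z(\Delta) \hookrightarrow \zeta(\Delta)$ and left-exactness of $\Hom_{\G_{n-1}}(\Pi, -)$ yield
\[ \dim \Hom_{\G_{n-1}}(\Pi, Z(\Delta)) \leq \dim \Hom_{\G_{n-1}}(\Pi, \zeta(\Delta)). \]
It then suffices to bound the right-hand side by $1$.

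Next I would invoke Chan's theorem \cite[Theorem 1.1]{ch23}, which provides this bound (in fact equality to $1$) whenever $\Pi$ is a standard module of $\G_n$, together with the hypothesis $k = n(\rho) \geq 2$. By cuspidal-support compatibility, the cuspidal support of $\Pi$ must contain $\{\rho, \nu\rho, \ldots, \nu^{m-1}\rho\}$, together with exactly one additional cuspidal occupying the remaining $n - km = 1$ dimension, which must be a character $\chi$ of $\G_1$. Crucially, because $k \geq 2$, this character $\chi$ has disjoint cuspidal support from each $\nu^i\rho$, so they commute under parabolic induction. This commutativity allows us to rearrange $\Pi$ isomorphically into a form matching a standard module $\Pi_{\text{std}}$ of $\G_n$ (with $\chi$ placed at the position dictated by $e(\chi)$ in the standard ordering), and Chan's theorem delivers the bound $\dim \Hom_{\G_{n-1}}(\Pi_{\text{std}}, \zeta(\Delta)) = 1$.

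The hard part will be handling the case where the line cuspidals $\nu^j\rho$ themselves appear in a non-standard order in $\Pi$, since linked cuspidals do not commute under parabolic induction and so their rearrangement is not an isomorphism. For such orderings, the $\Hom$ dimension must be controlled by different means; I expect that the Bernstein--Zelevinsky filtration of Lemma \ref{filtration for parabolically induced modules} reduces the $\Hom$ computation on $\G_n$ to a layered analysis on smaller $\G$-groups, where the standard-module structure is automatic and Chan's theorem can be applied at each layer to yield the desired global bound of $1$.
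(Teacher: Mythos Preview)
Your proposal contains a genuine gap. The assertion that ``by cuspidal-support compatibility, the cuspidal support of $\Pi$ must contain $\{\rho,\nu\rho,\ldots,\nu^{m-1}\rho\}$, together with exactly one additional cuspidal occupying the remaining $n-km=1$ dimension'' is unfounded. There is no such compatibility across the restriction from $\G_n$ to $\G_{n-1}$: the cuspidal factors $\rho_1,\ldots,\rho_s$ of $\Pi$ are completely unconstrained \emph{a priori}, and nonvanishing of $\Hom_{\G_{n-1}}(\Pi,\zeta(\Delta))$ does not pin them down to the $\nu^j\rho$-line plus a character. In particular, $\Pi$ can fail to be isomorphic to a standard module because of linked cuspidals on a cuspidal line having nothing to do with $\rho$. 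So the reduction you attempt, from arbitrary cuspidal-induced $\Pi$ to standard $\Pi$, simply does not go through, and the ``hard part'' you flag at the end is not merely about the $\nu^j\rho$ appearing in the wrong order; it is the whole problem.

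The paper's proof does use your embedding-plus-Chan idea, but only at the very end and only for a short explicit list of $\Pi$'s. The bulk of the argument works directly with $\Hom_{\G_{n-1}}(\Pi,Z(\Delta))$ rather than with $\Hom_{\G_{n-1}}(\Pi,\zeta(\Delta))$, using both parts of Lemma~\ref{left right bz filtration} and the fact (Lemma~\ref{derivative of ZDelta}) that the only nonzero derivatives of $Z(\Delta)$ are the $0$-th and the $k$-th. This forces any contribution to come from $\Hom_{\G_{n-1}}(\nu^{1/2}\Pi^{(1)},Z(\Delta))$ or $\Hom_{\G_{n-k-1}}(\nu^{1/2}\Pi^{(k+1)},Z({}^{-}\Delta))$ (and the analogous left-derivative terms), which are now $\Hom$ spaces on the \emph{same} group and hence governed by honest cuspidal-support matching. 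One then argues that $\dim\Hom\geq 2$ forces the composition factor $\nu^{m-2}\rho\times\cdots\times\rho$ to occur at least twice in the semisimplification of $\nu^{-1/2}\cdot{}^{(k+1)}\Pi$, which pins $\Pi$ down to one of finitely many explicit principal series $\Pi_1,\ldots,\Pi_{m-1}$ (or a single special form handled separately by the right-derivative bound). These $\Pi_i$ happen to be standard modules, and only then does one embed $Z(\Delta)\hookrightarrow\zeta(\Delta)$ and invoke Chan's theorem. The hypothesis $k\geq 2$ is used not to commute $\chi$ past the $\nu^j\rho$'s, but to control which principal series can produce the repeated composition factor.
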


\subsection*{Organization of the Paper} 

In Section \ref{section two}, we fix some notations used in this paper and discuss preliminaries. In Section \ref{section three}, we recall the filtration for parabolically induced modules introduced by K.Y. Chan that plays a crucial role in our arguments. In Section \ref{section four}, we define the notion of a good pair. In Section \ref{section five}, we prove Theorem \ref{multiplicity one for generic}. We prove Theorem \ref{multiplicity for steinberg}, Theorem \ref{multiplicity for generalized steinberg}, and Theorem \ref{intermediate multiplicities} in Section \ref{section six}. In Section \ref{section seven}, we prove Theorems \ref{list} and \ref{list of non generic}. We prove Theorem \ref{multiplicity for ZDelta} in Section \ref{section eight}. 

In Section \ref{section nine}, we give a geometric explanation for why the irreducible representations listed in Theorem \ref{list} have $\St_{n-1}$ as a quotient upon restriction. In Section \ref{section ten}, we give an alternative argument to prove part of Theorem \ref{multiplicity for steinberg} (B). We point out that in the last section, we study certain geometric methods for obtaining some results on branching laws and higher multiplicity which also work for pairs such as $(\SO(n),\SO(n-1))$ and $(\U(n),\U(n-1))$.

\subsection*{Acknowledgements:}The author thanks Professor Dipendra Prasad for his constant encouragement and several useful discussions. The author thanks him for a careful reading of the paper and for his comments on the paper.

\section{Preliminaries} \label{section two}

\subsection{Basic Notations}

Let $F$ be a non-archimedean local field and let $\G_n=\GL_{n}(F)$. Let $\nu(g) = |\det(g)|_F$ for $g\in \G_n$. Let $\Alg(\G_n)$ denote the category of smooth representations of $\G_n$. All representations considered in this paper are smooth. For a representation $\pi$ of $\G_n$, we set $n(\pi)=n$ and denote the smooth dual of $\pi$ by $\pi^{\vee}$. 

Given a group $G$ and a closed subgroup $H$, for $\pi\in \Alg(H)$ we let $\Ind_H^{G}(\pi)$ (resp. $\ind_H^{G}(\pi)$) denote the normalized induction (resp. normalized compact induction). For $\pi_1\in \Alg(\G_{n_1})$ and $\pi_2\in \Alg(\G_{n_2})$ we denote by $\pi_1 \times \pi_2$ the representation of $G_{n_1+n_2}$ obtained by normalized parabolic induction of the representation $\pi_1 \boxtimes \pi_2$ of $\G_{n_1}\times \G_{n_2}$.

Let $\rho$ be a cuspidal representation of $\G_n$. A segment $\Delta$ built out of $\rho$ is a set of irreducible representations of $\G_n$ of the form, 
\[ \Delta = [\nu^a\rho,\nu^b\rho] = \{\nu^a\rho,\nu^{a+1}\rho,\ldots,\nu^b\rho\} \] 
where, $a, b \in \mathbb{C}$ and $b-a\in \mathbb{Z}_{\geq 0}$. We set $a(\Delta)=\nu^a\rho$ and $b(\Delta)=\nu^b\rho$. We define the relative length (resp. absolute length) of $\Delta$ to be $l_r(\Delta)=(b-a+1)$ (resp. $l_a(\Delta)=n(b-a+1)$).  We define $\Delta^{\vee}$ to be the segment,
\[ \Delta^{\vee} = \{\nu^{-b}\rho^{\vee},\nu^{-b+1}\rho^{\vee},\ldots,\nu^{-a}\rho^{\vee}\}. \] 
We let $Q(\Delta)$ (resp. $Z(\Delta)$) denote the unique irreducible quotient (resp. submodule) of the representation, 
\[ \nu^a\rho\times \nu^{a+1}\rho\times \ldots \times \nu^b\rho.\] 
Any irreducible essentially square-integrable representation of $\G_n$ is of the form $Q(\Delta)$ (see \cite[Theorem 9.3]{ze80}). If $\Delta = [\nu^{-(\frac{n-1}{2})},\nu^{(\frac{n-1}{2})}]$, then $Q(\Delta)$ is called the Steinberg representation of $\G_n$ and is denoted as $\St_n$. Note that $\St_1$ is simply the trivial representation of $\G_1$. Given a segment $\Delta = [\nu^a\rho,\nu^b\rho]$, if $n(\rho)\geq 2$ then $Q(\Delta)$ is called a generalized Steinberg representation.

Let $\pi_i \in \Alg(\G_{n_i})$ for $i=1,2,\ldots r$ be irreducible representations. By a principal series representation of 
$\G_n$, we mean a representation of the form,
\[\Pi = \pi_1 \times \pi_2 \times \ldots \times \pi_r,\]
where $n=\sum_{i=1}^{r} n_i$. Given an irreducible representation $\pi$ of $\G_n$, there exists a unique collection ${\{\rho_1,\rho_2,\ldots,\rho_s\}}$ of cuspidal representations such that $\pi$ is a subquotient of $\rho_1\times\rho_2\times \ldots\times \rho_s$. This collection is known as the cuspidal support of $\pi$ and is denoted as $\csupp(\pi)$. We will also say that the cuspidal support of the principal series $\rho_1\times \rho_2 \times \ldots \times \rho_s$ is the multiset ${\{\rho_1,\rho_2,\ldots,\rho_s\}}$. Given cuspidal representations $\rho_1$ and $\rho_2$ of $\G_n$, if $\rho_1=\nu^c\rho_2$ for some integer $c$, we say that $\rho_1$ and $\rho_2$ lie in the same cuspidal line. Given an irreducible representation $\pi$ of $\G_n$, we let $\csupp_{\mathbb{Z}}(\pi)$ denote the multiset ${\{\nu^c\rho : c \in \mathbb{Z}, \rho \in \csupp(\pi) \}}$.

We say that two segments $\Delta_1$ and $\Delta_2$ are linked if $\Delta_1 \not \subset \Delta_2$, $\Delta_2 \not \subset \Delta_1$ and $\Delta_1 \cup \Delta_2$ is a segment. We say that $\Delta_1$ precedes $\Delta_2$ if $\Delta_1$ and $\Delta_2$ are linked and $b(\Delta_2)=\nu^m b(\Delta_1)$ for some integer $m>0$. Suppose that we are given a multiset of segments (called a multisegment) $\mathfrak{m} = \{ \Delta_1,\Delta_2,\ldots,\Delta_r \}$ such that $\Delta_i$ does not precede $\Delta_j$ for $i<j$. By the Zelevinsky classification, the representation $Z(\Delta_1)\times Z(\Delta_2) \times \ldots \times Z(\Delta_r)$ has a unique irreducible submodule which we denote as $Z(\Delta_1,\Delta_2,\ldots,\Delta_r)$. Similarly by the Langlands classification, the representation $Q(\Delta_1)\times Q(\Delta_2) \times \ldots \times Q(\Delta_r)$ has a unique irreducible quotient which we denote as $Q(\Delta_1,\Delta_2,\ldots,\Delta_r)$. A representation of the form $Q(\Delta_1)\times Q(\Delta_2) \times \ldots \times Q(\Delta_r)$ such that $\Delta_i$ does not precede $\Delta_j$ for $i<j$ is called a standard representation.

Let $U_n$ denote the group of upper triangular unipotent matrices of $\G_n$. Let $\psi_n$ be a non-degenerate character of $U_n$. The representation $\ind_{U_n}^{\G_n}\psi_n$ is known as the Gelfand-Graev representation of $\G_n$. 

\subsection{The Gelfand-Kazhdan Automorphism}

The automorphism $\theta(g) = (g^{-1})^t$ of $\G_n$ induces an exact functor on $\Alg(\G_n)$. This functor on $\Alg(\G_n)$, once again denoted as $\theta$, is called the Gelfand-Kazhdan automorphism.

Let $\pi_1$ and $\pi_2$ be smooth representations of $\G_n$ and $\G_{n-1}$  respectively. Since the automorphism $\theta$ preserves $\G_{n-1}$, for all $i\geq 0$, 
\[ \Ext^i_{\G_{n-1}}(\pi_1,\pi_2) \cong \Ext^i_{\G_{n-1}}(\theta(\pi_1),\theta(\pi_2)). \]
The following result is due to Gelfand and Kazhdan (see \cite{gk75}).
\begin{lemma} \label{theta is dual}

If $\pi \in \Alg(\G_n)$ is irreducible, then $\theta(\pi) \cong \pi^{\vee}$.

\end{lemma}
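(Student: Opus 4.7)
The plan is to prove this via the standard Gelfand--Kazhdan argument, which uses that irreducible admissible representations of $\G_n$ are determined by their distribution characters together with the fact that the transpose map $g\mapsto g^t$ preserves every conjugation--invariant distribution on $\G_n$.

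First I would note that $\theta$ is an involutive automorphism of $\G_n$ preserving admissibility, so $\theta(\pi)$ is an irreducible admissible representation of $\G_n$ whenever $\pi$ is, and the same is obviously true of $\pi^\vee$. Recall that the (distribution) character of any admissible $\tau\in\Alg(\G_n)$ is the distribution $\chi_\tau(f)=\mathrm{tr}\,\tau(f)$ on $C_c^\infty(\G_n)$. By the fundamental result of Harish-Chandra/Jacquet on the linear independence of characters, an irreducible admissible representation of $\G_n$ is determined up to isomorphism by its character. Thus, to identify $\theta(\pi)$ with $\pi^\vee$, it is enough to verify the equality of characters
\[
\chi_{\theta(\pi)} = \chi_{\pi^\vee}
\]
as distributions on $\G_n$.

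Second, I would unwind what each side is. For any $f\in C_c^\infty(\G_n)$, writing $f^\theta(g)=f(\theta(g))=f((g^{-1})^t)$ and $f^\vee(g)=f(g^{-1})$, a direct change of variables gives
\[
\chi_{\theta(\pi)}(f)=\chi_\pi(f^\theta),\qquad \chi_{\pi^\vee}(f)=\chi_\pi(f^\vee).
\]
So the problem is reduced to showing that $\chi_\pi(f^\theta)=\chi_\pi(f^\vee)$ for all $f$, i.e.\ that $\chi_\pi$ is invariant under the involution $g\mapsto g^t$ of $\G_n$.

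The main input, and the real content of the argument, is the Gelfand--Kazhdan theorem: every distribution on $\G_n$ invariant under conjugation by $\G_n$ is automatically invariant under the transpose $g\mapsto g^t$. Since $\chi_\pi$ is a conjugation--invariant distribution, this gives the desired equality. The proof of this invariant-distribution statement is the hard part; it rests on the geometric fact that every element of $\G_n$ is conjugate to its transpose (via a symmetric matrix implementing the rational canonical form), together with a Bernstein-style localization argument to upgrade this pointwise statement to one about all invariant distributions. I would simply quote this from \cite{gk75}.

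Combining these steps, $\chi_{\theta(\pi)}=\chi_{\pi^\vee}$, and by linear independence of characters $\theta(\pi)\cong \pi^\vee$, completing the proof. The anticipated obstacle is not in any of the formal manipulations above but in the invariant-distribution theorem of Gelfand--Kazhdan; since this is already established in the literature and is being invoked as a known result, the lemma follows essentially immediately from it.
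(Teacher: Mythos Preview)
Your argument is correct and is precisely the standard Gelfand--Kazhdan proof; the paper itself does not supply a proof but simply attributes the lemma to \cite{gk75}, so your write-up is entirely consistent with (and more detailed than) what the paper does.
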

The following lemma (\cite[Lemma 1.9]{ze80}) tells us that $\theta$ acts on a product of two representations by reversing their order.
\begin{lemma} \label{theta on principal series}
Let $\pi_1$ and $\pi_2$ be smooth representations of $\G_{n_1}$ and $\G_{n_2}$  respectively. Then, 
\[ \theta(\pi_1 \times \pi_2)\cong \theta(\pi_2) \times \theta(\pi_1). \]

\end{lemma}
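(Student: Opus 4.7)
The plan is to track what $\theta$ does to the data defining the parabolic induction $\pi_1 \times \pi_2 = \Ind_P^{\G_n}(\pi_1 \boxtimes \pi_2)$, where $n = n_1 + n_2$ and $P = P_{n_1,n_2}$ is the standard upper block parabolic with Levi $M = \G_{n_1}\times \G_{n_2}$. Since $\theta(g) = (g^{-1})^t$ is a genuine automorphism of $\G_n$ (it is multiplicative because inverse-transpose is), one has the general principle that for any closed subgroup $H\subset \G_n$ and $\tau\in \Alg(H)$,
\[
\theta\bigl(\Ind_H^{\G_n}\tau\bigr)\;\cong\;\Ind_{\theta(H)}^{\G_n}\bigl(\theta(\tau)\bigr),
\]
the isomorphism being realized by $f\mapsto f\circ\theta$ on the spaces of induced functions. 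I would write down this intertwining map explicitly and verify it is $\G_n$-equivariant against the twisted action $g\cdot f = \rho(\theta(g))f$ (this is a one-line check using $\theta(xg)=\theta(x)\theta(g)$).

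Next I would apply this to $H = P$ and $\tau = \pi_1\boxtimes\pi_2$. A direct matrix computation shows $\theta(P_{n_1,n_2}) = \bar P_{n_1,n_2}$, the opposite (block lower triangular) parabolic with the same Levi, and that $\theta|_M$ acts factorwise as $(g_1,g_2)\mapsto(\theta(g_1),\theta(g_2))$. Combined with the principle above, this yields
\[
\theta(\pi_1\times\pi_2)\;\cong\;\Ind_{\bar P_{n_1,n_2}}^{\G_n}\bigl(\theta(\pi_1)\boxtimes \theta(\pi_2)\bigr).
\]
I then conjugate by the block permutation matrix $w=\begin{pmatrix}0 & I_{n_2}\\ I_{n_1} & 0\end{pmatrix}$; one checks $w^{-1}\bar P_{n_1,n_2}w = P_{n_2,n_1}$ and $w^{-1}\mathrm{diag}(g_1,g_2)w = \mathrm{diag}(g_2,g_1)$, giving the identification
\[
\Ind_{\bar P_{n_1,n_2}}^{\G_n}\bigl(\theta(\pi_1)\boxtimes \theta(\pi_2)\bigr)\;\cong\;\Ind_{P_{n_2,n_1}}^{\G_n}\bigl(\theta(\pi_2)\boxtimes \theta(\pi_1)\bigr)\;=\;\theta(\pi_2)\times\theta(\pi_1).
\]
Chaining the two isomorphisms yields the claim.

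The only subtlety I expect is bookkeeping for the modulus character $\delta_P^{1/2}$ attached to normalized induction. The check is that $\delta_{\bar P_{n_1,n_2}}(\theta(m)) = \delta_{P_{n_1,n_2}}(m)$ for $m=(g_1,g_2)\in M$, which reduces to the identity $|\det\theta(g_i)| = |\det g_i|^{-1}$ together with the fact that $\delta_{\bar P}$ and $\delta_P$ on $M$ are reciprocals; and that the intertwiner given by $w$ matches $\delta_{\bar P_{n_1,n_2}}^{1/2}$ to $\delta_{P_{n_2,n_1}}^{1/2}$, which follows because $w$-conjugation just relabels the two block sizes. Neither step is deep, but this modulus-character tracking is the main place a careless proof could go wrong.
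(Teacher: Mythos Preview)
Your argument is correct and is the standard proof of this fact. The paper itself does not supply a proof: it simply records the lemma with a citation to \cite[Lemma 1.9]{ze80}, so there is nothing to compare against beyond noting that your write-up unpacks exactly what that reference contains.
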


\subsection{Left and Right Derivatives}

Given $\pi\in \Alg(\G_n)$, we define the $i$-th Bernstein-Zelevinsky derivative $\pi^{(i)}$ as follows. Let $U_i\subset \G_i$ be the subgroup of upper triangular unipotent matrices. Fix a non-degenerate character $\psi_i$ of $U_i$. Let $R_{n-i}$ be the subgroup of $\G_n$ defined as,
\[ R_{n-i} = \left\{ \begin{pmatrix}
I_{n-i} & v \\
0  & z \\
\end{pmatrix} : v \in M_{(n-i) \times i}, z\in U_i \right \}. \]
We abuse notation to denote by $\psi_i$ the character of $R_{n-i}$ defined as,
\[ \psi_i(\begin{pmatrix}
I_{n-i} & v \\
0  & z \\
\end{pmatrix} ) =  \psi_i^{\prime}(z). \]
Let $\delta$ be the modular character of $R_{n-i}$. Then $\pi^{(i)}$ is defined as the normalized twisted Jacquet module,
\[ \pi^{(i)} = \delta^{-1/2} \pi / \langle \pi(u).x - \psi_i(u).x : x \in \pi, u\in R_{n-i}
 \rangle. \] 
 
We shall call the usual $i$-th Bernstein-Zelevinsky derivative $\pi^{(i)}$ as the right $i$-th Bernstein-Zelevinsky derivative. The notion of left Bernstein-Zelevinsky derivative was introduced in the work of K.Y. Chan and G. Savin (see \cite{cs21}). We define the left $i$-th Bernstein-Zelevinsky  derivative $^{(i)}\pi$ as,
\[ ^{(i)}\pi = \theta(\theta(\pi)^{(i)}) .\] 
We recall that by the Bernstein-Zelevinsky filtration (see \cite{bz76}), we have a filtration of submodules $ 0=V_n\subsetneq V_{n-1}\subsetneq \ldots \subsetneq V_0 = \pi|_{\G_{n-1}}$ such that,
\[ V_i/V_{i+1} \cong \ind_{\G_{n-i-1}R_{n-i-1}}^{\G_{n-1}}(\nu^{1/2}\pi^{(i+1)}\otimes \psi_i). \]

We shall frequently make use of the following product rule for derivatives (see \cite[Corollary 4.14(c)]{bz77}).
\begin{lemma}
Let $\pi_j \in \Alg(\G_{n_j})$ for $j=1,2,\ldots,k$. Then the representation $(\pi_1 \times \pi_2 \times \ldots \times \pi_k)^{(i)}$ has a filtration whose successive quotients are, \[\pi_1^{(i_1)} \times \pi_2^{(i_2)} \times \ldots \times \pi_k^{(i_k)},\] where $0\leq i_t \leq n_t,$ for $t = 1, 2, 3, \ldots, k$ and $i_1+i_2+\ldots+i_k = i$.   
\end{lemma}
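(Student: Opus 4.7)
The plan is to reduce the general statement to the two-factor case by induction on $k$, and then to treat $k=2$ using the Bernstein--Zelevinsky geometric lemma applied to the twisted Jacquet functor that defines $(-)^{(i)}$.

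\emph{Reduction to $k=2$.} By associativity of parabolic induction, $\pi_1 \times \cdots \times \pi_k \cong \pi_1 \times (\pi_2 \times \cdots \times \pi_k)$. Assuming the two-factor case, I would obtain a filtration of $(\pi_1 \times \cdots \times \pi_k)^{(i)}$ whose successive quotients have the shape $\pi_1^{(i_1)} \times (\pi_2 \times \cdots \times \pi_k)^{(j)}$ with $i_1 + j = i$. Applying the inductive hypothesis to the $k-1$ factors, each $(\pi_2 \times \cdots \times \pi_k)^{(j)}$ itself has a filtration with successive quotients $\pi_2^{(i_2)} \times \cdots \times \pi_k^{(i_k)}$ satisfying $\sum_{t \geq 2} i_t = j$. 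Since parabolic induction is exact, I would splice these filtrations to produce a filtration of $(\pi_1 \times \cdots \times \pi_k)^{(i)}$ with precisely the claimed successive quotients.

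\emph{The $k=2$ case.} Set $\Pi = \pi_1 \times \pi_2 = \Ind_P^{\G_n}(\pi_1 \boxtimes \pi_2)$, where $P = MU$ is the standard parabolic with Levi $M = \G_{n_1} \times \G_{n_2}$ and $n = n_1+n_2$. The subgroup $R_{n-i}$ is the unipotent radical of the standard parabolic $Q = LN$ with Levi $L = \G_{n-i} \times (\G_1)^{i}$, and $\psi_i$ is nondegenerate on the simple root entries sitting inside the $U_i$ block. I would first apply the Bernstein--Zelevinsky geometric lemma to the normalized Jacquet module $r_Q(\Pi)$: it admits a filtration indexed by the shortest representatives of $W_L \backslash W_n / W_M$, each successive quotient being (up to modulus twist) a parabolic induction inside $L$ of a Jacquet module of $\pi_1 \boxtimes \pi_2$ along $M \cap wQw^{-1}$. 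I would then pass to $\psi_i$-coinvariants; since this functor is right-exact, the filtration descends to a filtration of $\Pi^{(i)}$.

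\emph{Identifying the surviving pieces, and the main obstacle.} Since $\psi_i$ is nondegenerate on the $U_i$ block, any summand for which the character induced on its unipotent piece is nontrivially twisted must vanish after taking $\psi_i$-coinvariants. A direct combinatorial inspection shows the surviving double cosets are parameterized exactly by pairs $(i_1,i_2)$ with $i_1 + i_2 = i$ and $0 \leq i_t \leq n_t$, where $i_1$ (respectively $i_2$) records how many of the $i$ singleton blocks of $L$ are absorbed into the $\G_{n_1}$ (respectively $\G_{n_2}$) factor by the Weyl element $w$. On the $(i_1,i_2)$-summand, the restricted Jacquet and twisted-Jacquet data on each $\pi_j$-block recover $\pi_j^{(i_j)}$ by the very definition of the derivative, and the remaining parabolic induction inside $\G_{n-i}$ assembles these into the external product $\pi_1^{(i_1)} \times \pi_2^{(i_2)}$. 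The hard part will be the normalization bookkeeping: one must verify that the character induced on each surviving piece matches, after conjugation by $w$, the nondegenerate character used to define $\pi_j^{(i_j)}$, and that the product of modulus characters produced by the geometric lemma combines correctly with the explicit $\delta^{-1/2}$ in the definition of $(-)^{(i)}$ to yield the correct normalization on the external product. These are standard but delicate root-space computations of the kind carried out in \cite{bz77}.
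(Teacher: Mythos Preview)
The paper does not prove this lemma; it simply records it with a citation to \cite[Corollary~4.14(c)]{bz77}. Your reduction to $k=2$ by associativity and exactness of parabolic induction is correct and standard, and the Mackey-theoretic strategy you outline for $k=2$ is in the spirit of the original argument, although Bernstein--Zelevinsky package it through the functors $\Phi^{\pm},\Psi^{\pm}$ on the mirabolic subgroup and an induction on $i$ rather than a single geometric-lemma computation.

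There is, however, a slip in your description of the $k=2$ step. You propose to compute the \emph{untwisted} Jacquet module $r_Q(\Pi)$ along $Q=LN$ with $N=R_{n-i}$ and then ``pass to $\psi_i$-coinvariants.'' But $r_Q(\Pi)$ is by definition the space of $N$-coinvariants for the \emph{trivial} character of $N$; once $N$ has been factored out there is no $N$-action left against which to impose the nontrivial character $\psi_i$, and $\Pi^{(i)}$ is \emph{not} a further quotient of $r_Q(\Pi)$. The correct order of operations is to use Mackey theory one step earlier: the $(Q,P)$-double coset decomposition already furnishes a filtration of $\Pi|_Q$ by $Q$-submodules \emph{before} any coinvariants are taken, and the $\psi_i$-twisted Jacquet functor (which is exact, not merely right-exact) can then be applied term by term to that filtration. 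Equivalently, since $\psi_i$ is trivial on the unipotent radical of the $(n-i,i)$-parabolic, one may first take $r_{(n-i,i)}(\Pi)$ and then apply the top derivative on the $\G_i$-factor, using heredity of Whittaker coinvariants under parabolic induction. Either way, on each surviving piece the transported character matches the nondegenerate characters defining $\pi_1^{(i_1)}$ and $\pi_2^{(i_2)}$, exactly as you anticipate in your final paragraph; once the order of operations is corrected, the rest of your outline goes through.
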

There is a similar product rule for left derivatives. We now collect some lemmas that we shall need. 

\begin{lemma}(see \cite[Proof of Lemma 3.2]{cs21}) \label{left right bz filtration}
Let $\pi_1\in\Alg(\G_n)$ and $\pi_2\in\Alg(\G_{n-1})$. Then,
\begin{itemize}
\item[(A)] $\dim \Hom_{\G_{n-1}}(\pi_1,\pi_2) \leq \sum_{i=1}^{n}\dim \Hom_{G_{n-i}}(\nu^{1/2}.\pi_1^{(i)},^{(i-1)}\pi_2).$
\item[(B)] $\dim \Hom_{\G_{n-1}}(\pi_1,\pi_2) \leq \sum_{i=1}^{n}\dim \Hom_{G_{n-i}}(\nu^{-1/2}.^{(i)}\pi_1,\pi_2^{(i-1)}).$
\end{itemize}
\end{lemma}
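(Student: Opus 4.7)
The plan is to derive (A) directly from the Bernstein--Zelevinsky filtration $0=V_n\subsetneq\cdots\subsetneq V_0=\pi_1|_{\G_{n-1}}$ with $V_i/V_{i+1}\cong\ind_{\G_{n-i-1}R_{n-i-1}}^{\G_{n-1}}(\nu^{1/2}\pi_1^{(i+1)}\otimes\psi_i)$ recalled above, and then deduce (B) formally via the Gelfand--Kazhdan involution $\theta$. For (A), I first apply $\Hom_{\G_{n-1}}(-,\pi_2)$ to each short exact sequence $0\to V_{i+1}\to V_i\to V_i/V_{i+1}\to 0$. Left-exactness and iteration yield
\[\dim\Hom_{\G_{n-1}}(\pi_1,\pi_2)\leq \sum_{i=0}^{n-1}\dim\Hom_{\G_{n-1}}(V_i/V_{i+1},\pi_2),\]
and, after re-indexing $j=i+1$, the right side matches the sum $\sum_{j=1}^{n}$ in (A) provided I can identify each $\dim\Hom_{\G_{n-1}}(V_i/V_{i+1},\pi_2)$ with $\dim\Hom_{\G_{n-i-1}}(\nu^{1/2}\pi_1^{(i+1)},{}^{(i)}\pi_2)$.

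This identification is the crux of the proof. Applying Frobenius reciprocity for normalized compact induction, $\Hom_{\G_{n-1}}(V_i/V_{i+1},\pi_2)$ becomes a Hom over $\G_{n-i-1}R_{n-i-1}$ whose codomain is the $(R_{n-i-1},\psi_i)$-twisted Jacquet module of $\pi_2$. Because $R_{n-i-1}$ lies in the lower-right block of $\G_{n-1}$, this twisted Jacquet restriction is taken along the radical \emph{opposite} to the one used to form the right derivative $\pi_2^{(i)}$; combining the definition ${}^{(i)}\pi_2=\theta(\theta(\pi_2)^{(i)})$ with the fact that $\theta$ interchanges upper- and lower-unipotent radicals identifies this Jacquet module precisely with ${}^{(i)}\pi_2$, the residual modular character being absorbed by the $\delta^{-1/2}$ normalization built into the definition of the derivative. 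This gives (A).

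Part (B) follows formally by applying (A) to the pair $(\theta(\pi_1),\theta(\pi_2))$ and using that $\theta$ is an exact involution, so $\dim\Hom_{\G_{n-1}}(\pi_1,\pi_2)=\dim\Hom_{\G_{n-1}}(\theta(\pi_1),\theta(\pi_2))$. Via the identities $\theta(\pi_1)^{(i)}\cong\theta({}^{(i)}\pi_1)$, $\theta(\pi_2^{(i-1)})\cong{}^{(i-1)}\theta(\pi_2)$, and $\theta(\nu^{c}\sigma)\cong\nu^{-c}\theta(\sigma)$ for any $\sigma\in\Alg(\G_m)$, each summand produced by (A) translates, after pulling $\theta$ back across the Hom, into $\dim\Hom_{\G_{n-i}}(\nu^{-1/2}{}^{(i)}\pi_1,\pi_2^{(i-1)})$, the desired bound in (B). The main obstacle will be the careful bookkeeping in the Frobenius reciprocity identification in the middle step, specifically tracking all modular characters and checking that $\theta$ truly intertwines the opposite-radical twisted Jacquet functor with the left Bernstein--Zelevinsky derivative; once this compatibility is verified, the two bounds follow by the formal manipulations above.
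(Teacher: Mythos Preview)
The paper does not supply its own proof of this lemma; it simply records the statement with a pointer to \cite[Proof of Lemma 3.2]{cs21}. Your overall architecture is the standard one and matches Chan--Savin: bound $\Hom_{\G_{n-1}}(\pi_1,\pi_2)$ by the sum over the graded pieces of the Bernstein--Zelevinsky filtration, identify each summand via an adjunction, and then deduce (B) from (A) by applying $\theta$.

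The gap is in the identification step. You invoke ``Frobenius reciprocity for normalized compact induction'' to turn $\Hom_{\G_{n-1}}(V_i/V_{i+1},\pi_2)$ into a Hom whose codomain is the $(R_{n-i-1},\psi_i)$-twisted Jacquet module of $\pi_2$, and then argue that this is the left derivative because $R_{n-i-1}$ ``lies in the lower-right block'' and is hence opposite to the radical defining the right derivative. Neither part of this is right. With the paper's conventions $R_{n-i-1}\subset\G_{n-1}$ is the \emph{upper}-triangular group of matrices $\left(\begin{smallmatrix} I_{n-i-1}&*\\ 0&z\end{smallmatrix}\right)$ with $z\in U_i$, and it is \emph{exactly} the subgroup used to form the right derivative $\pi_2^{(i)}$, not an opposite one. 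Moreover, naive Frobenius reciprocity (left adjointness of $\ind$ to restriction) is unavailable here because $\G_{n-i-1}R_{n-i-1}$ is not open in $\G_{n-1}$; and even formally it would yield $(R_{n-i-1},\psi_i)$-\emph{invariants} of $\pi_2$, not a Jacquet module. The correct input is Bernstein's second adjointness in this Whittaker setting---this is precisely what the paper invokes as \cite[Lemma 2.1]{cs21} in the proof of Theorem \ref{multiplicity one for generic version two}---which identifies the right adjoint of $\ind_{\G_{n-i-1}R_{n-i-1}}^{\G_{n-1}}(\,\cdot\,\otimes\psi_i)$ with the twisted Jacquet functor along the \emph{opposite} unipotent $\bar R_{n-i-1}$; it is this opposite-radical coinvariants that, after the modular normalization, equals ${}^{(i)}\pi_2$. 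Once you substitute this second adjointness for your Frobenius step, the rest of your argument, including the passage from (A) to (B) via $\theta$, goes through as written.
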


Let $\rho$ be a cuspidal representation of $\G_r$. Given a segment $\Delta=[\nu^a\rho,\nu^b\rho]$ we let $\Delta^{(k)}$ (resp. $^{(k)}\Delta$) be the segment obtained from $\Delta$ by truncating from the right (resp. left) $k$ times. We denote $\Delta^{-} = \Delta^{(1)}$ and $^{-}\Delta=$ $^{(1)}\Delta$. We state the next two lemmas with this notation. 

\begin{lemma}(see \cite[Proposition 2.7]{cs21}) Let $i\in \mathbb{Z}_{>0}$. Then the $i$-th left and right derivatives of $Q(\Delta)$ vanish unless $i=jr$ for some $j\in \mathbb{Z}_{\geq 0}$ in which case, \[ Q(\Delta)^{(i)} = Q(^{(j)} \Delta) \text{ and } ^{(i)}Q(\Delta) = Q( \Delta^{(j)}). \]
\end{lemma}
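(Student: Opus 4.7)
The result has two parts — one for right derivatives and one for left — and I would reduce the second to the first via the Gelfand--Kazhdan automorphism $\theta$.

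First I would handle the right derivatives. The essential input is that for an irreducible cuspidal representation $\sigma$ of $\G_r$ the Bernstein--Zelevinsky derivatives are supported at the endpoints: $\sigma^{(0)} = \sigma$, $\sigma^{(r)}$ is the trivial representation of $\G_0$, and $\sigma^{(i)} = 0$ for $0 < i < r$. Applying the Leibniz-type product rule stated just above the lemma to $\nu^a\rho \times \nu^{a+1}\rho \times \cdots \times \nu^b\rho$, I get a filtration of $(\nu^a\rho \times \cdots \times \nu^b\rho)^{(i)}$ whose nonzero successive quotients are indexed by choices of a subset $S \subseteq \{1,\ldots,b-a+1\}$ where we take the $r$-th derivative (trivial) and the remaining positions where we take the $0$-th derivative. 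Since the indices must sum to $i$, there is no nonzero subquotient unless $r \mid i$, which gives the vanishing statement.

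For $i = jr$, exactness of the $i$-th derivative functor applied to the surjection $\nu^a\rho \times \cdots \times \nu^b\rho \twoheadrightarrow Q(\Delta)$ realises $Q(\Delta)^{(jr)}$ as a quotient of the filtered module above. The surviving subquotient is the one corresponding to $S = \{1,2,\ldots,j\}$, giving the product $\nu^{a+j}\rho \times \nu^{a+j+1}\rho \times \cdots \times \nu^b\rho$; every other choice of $S$ produces a subquotient whose cuspidal support, after stripping the top derivatives, is incompatible with being a quotient of $Q(\Delta)^{(jr)}$. To identify the surviving piece precisely, I would either argue by ordering the subquotients in the BZ filtration (only the extremal one survives when mapping into an irreducible quotient with fixed cuspidal support) or, alternatively, compute the Jacquet module of $Q(\Delta)$ with respect to the maximal parabolic of type $(n - jr, jr)$, which is classically known to equal $Q(^{(j)}\Delta) \boxtimes Q([\nu^{b-j+1}\rho, \nu^b\rho])$. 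Either way the outcome is $Q(\Delta)^{(jr)} = Q(^{(j)}\Delta)$.

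For the left derivative claim I would use the definition $^{(i)}Q(\Delta) = \theta(\theta(Q(\Delta))^{(i)})$ together with Lemma \ref{theta is dual}, giving $\theta(Q(\Delta)) = Q(\Delta)^{\vee} = Q(\Delta^{\vee})$, where the second equality is the standard fact that the smooth dual of an essentially square-integrable representation is given by the dual segment. Applying the right-derivative computation to $Q(\Delta^{\vee})$ yields $Q(\Delta^{\vee})^{(jr)} = Q(^{(j)}(\Delta^{\vee}))$, and a direct comparison of the definitions shows $(\Delta^{(j)})^{\vee} = {}^{(j)}(\Delta^{\vee})$, so applying $\theta$ once more recovers $^{(jr)}Q(\Delta) = Q(\Delta^{(j)})$. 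The main obstacle is the middle step above: pinning down exactly which subquotient of the filtration on $(\nu^a\rho \times \cdots \times \nu^b\rho)^{(jr)}$ survives after passing to $Q(\Delta)^{(jr)}$, since the naive product rule only produces a filtration with many pieces and one must rule out all but one as contributing to an irreducible quotient.
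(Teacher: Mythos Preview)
The paper does not supply its own proof of this lemma; it is quoted from \cite[Proposition~2.7]{cs21}, so there is nothing to compare against directly. I will instead comment on the internal soundness of your argument.

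Your reduction of the left derivative to the right one via $\theta$ is correct and is exactly how the left case is handled in \cite{cs21}. For the right derivative, the product-rule route is workable for the vanishing statement (if the derivative of the ambient principal series vanishes, so does that of its quotient $Q(\Delta)$), but for the identification $Q(\Delta)^{(jr)}=Q({}^{(j)}\Delta)$ it is, as you yourself note, awkward: knowing that $Q(\Delta)^{(jr)}$ is a quotient of a filtered module with many pieces does not by itself tell you it is irreducible or single out the correct piece.

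The alternative you mention---computing the Jacquet module of $Q(\Delta)$---is the clean route and is in fact already available in the paper as the lemma immediately preceding this one (Zelevinsky's \cite[Proposition~9.5]{ze80}). However, you have written that Jacquet module incorrectly: for the parabolic of type $(n-jr,jr)$ one has
\[
r_{(n-jr,jr)}\bigl(Q(\Delta)\bigr)\;=\;Q({}^{(j)}\Delta)\boxtimes Q\bigl([\nu^{a}\rho,\nu^{a+j-1}\rho]\bigr),
\]
i.e.\ the $\G_{jr}$-factor is the \emph{bottom} $j$ terms of the segment, not the top segment $[\nu^{b-j+1}\rho,\nu^{b}\rho]$ you wrote (your two factors even overlap in cuspidal support, which cannot happen). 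Once this is corrected the argument is immediate: the $i$-th derivative factors as $r_{(n-i,i)}$ followed by taking $(U_i,\psi_i)$-coinvariants on the second factor; here the second factor is essentially square-integrable hence generic, so its Whittaker space is one-dimensional and $Q(\Delta)^{(jr)}=Q({}^{(j)}\Delta)$ on the nose. This bypasses the ``main obstacle'' you flagged entirely.
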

\begin{lemma}(see \cite[Proposition 2.5]{cs21}) \label{derivative of ZDelta} Let $i\in \mathbb{Z}_{>0}$. Then the $i$-th left and right derivatives of $Z(\Delta)$ vanish unless $i=r$ in which case, \[ Z(\Delta)^{(i)} = Z( \Delta^{-}) \text{ and } ^{(i)}Z(\Delta) = Z(^{-}\Delta). \]
\end{lemma}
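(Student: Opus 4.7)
The plan is to compute both derivatives $Z(\Delta)^{(i)}$ and $^{(i)}Z(\Delta)$ by first computing the ordinary Jacquet module of $Z(\Delta)$ along the maximal parabolic $P_{n-i,i}$ (resp.\ $P_{i,n-i}$), and then applying the $(U_i,\psi_i)$-twisted coinvariants functor on the $\G_i$-factor.

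First I would dispose of indices with $r \nmid i$. Since the derivative functor is exact and $Z(\Delta) \hookrightarrow \nu^a\rho \times \nu^{a+1}\rho \times \ldots \times \nu^b\rho$, the derivative $Z(\Delta)^{(i)}$ embeds in the $i$-th derivative of this principal series. By the product rule for derivatives, the latter is filtered by tensor products $(\nu^a\rho)^{(i_1)} \times \cdots \times (\nu^b\rho)^{(i_k)}$ with $\sum i_j = i$; since $\rho$ is cuspidal on $\G_r$, each factor vanishes unless the corresponding index is $0$ or $r$. Hence $i$ must be a multiple of $r$. The analogous vanishing for $^{(i)}Z(\Delta)$ follows from $^{(i)}Z(\Delta) = \theta(\theta(Z(\Delta))^{(i)})$ together with Lemmas \ref{theta is dual} and \ref{theta on principal series}, which give $\theta(Z(\Delta)) \cong Z(\Delta^{\vee})$.

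For $i = jr$ with $j \ge 1$, the crucial step is the ``rightmost-extraction'' Jacquet-module identity
\[
J_{P_{r(k-j),\,jr}}\bigl(Z(\Delta)\bigr) \;\cong\; Z(\Delta^{(j)}) \otimes Z\bigl([\nu^{b-j+1}\rho,\nu^b\rho]\bigr),
\]
which is part of Zelevinsky's classical analysis of segment representations. It can be derived by applying the geometric lemma to $Z(\Delta) \hookrightarrow \nu^a\rho \times \cdots \times \nu^b\rho$: the Jacquet module of the ambient induction is filtered by subquotients indexed by $j$-element subsets $S \subseteq \{a,\ldots,b\}$, and combining exactness of the Jacquet functor with the characterization of $Z(\Delta)$ as the unique irreducible submodule of the ambient principal series (together with a socle analysis on each Levi factor) identifies the Jacquet module of $Z(\Delta)$ with the single subquotient corresponding to $S = \{b-j+1,\ldots,b\}$.

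Given this Jacquet module, the derivative becomes
\[
Z(\Delta)^{(jr)} \;\cong\; Z(\Delta^{(j)}) \otimes \Wh\bigl(Z([\nu^{b-j+1}\rho,\nu^b\rho])\bigr),
\]
applying twisted $(U_{jr},\psi_{jr})$-coinvariants to the right factor. By the classification of generic representations, $Z(\Delta')$ is generic iff $l_r(\Delta') \le 1$: the induced segment $\nu^c\rho \times \cdots \times \nu^{c+m-1}\rho$ has a unique generic irreducible subquotient $Q(\Delta')$, and $Q(\Delta') = Z(\Delta')$ exactly when $l_r(\Delta') = 1$. Therefore the Whittaker factor equals $\mathbb{C}$ when $j = 1$ and vanishes for $j \ge 2$. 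This yields $Z(\Delta)^{(r)} \cong Z(\Delta^{(1)}) = Z(\Delta^-)$ and $Z(\Delta)^{(jr)} = 0$ for $j \ge 2$. The identity $^{(r)}Z(\Delta) = Z(^{-}\Delta)$ follows either by the parallel Jacquet-module computation with $P_{jr,r(k-j)}$ or by $\theta$-symmetry combined with the right-derivative result already obtained.

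The main technical obstacle is pinning down the rightmost-extraction Jacquet-module identity for $Z(\Delta)$; once that is in place, the remainder of the argument reduces to a Whittaker calculation using the well-known characterization of generic irreducibles.
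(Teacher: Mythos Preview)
The paper does not supply its own proof of this lemma; it is quoted verbatim from \cite[Proposition 2.5]{cs21}, so there is no in-paper argument to compare against. Your argument is correct and is essentially the classical one: the right-derivative statement is Zelevinsky's computation (\cite[Proposition~3.4]{ze80}), obtained exactly as you describe by factoring $\pi^{(i)}$ as the Jacquet module along $P_{n-i,i}$ followed by Whittaker coinvariants on the $\G_i$-factor, together with the Jacquet-module identity for $Z(\Delta)$; the left-derivative statement then follows from the definition ${}^{(i)}Z(\Delta)=\theta(\theta(Z(\Delta))^{(i)})$ and $\theta(Z(\Delta))\cong Z(\Delta^{\vee})$, which is precisely how \cite{cs21} sets things up. One small remark: your ``rightmost-extraction'' Jacquet-module identity for $Z(\Delta)$ is standard and can be read off directly from Zelevinsky rather than rederived via the geometric lemma and a socle analysis, so that step need not be flagged as the main technical obstacle.
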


\subsection{A Second Adjointness Formula}

Let $r_{(n-l,l)}$ (resp. $\bar{r}_{(n-l,l)}$) denote the Jacquet functor on the category $\Alg(\G_n)$ with respect to the parabolic subgroup (resp. opposite parabolic subgroup) corresponding to the partition $(n-l,l)$ of $n$.

\begin{lemma}(see \cite[Proposition 9.5]{ze80})
 Let $\rho$ be a cuspidal representation of $\G_m$ and consider the segment $\Delta=[\rho,\rho^{k-1}\rho]$ where, $n=km$. If $l$ is not divisible by $m$ then $r_{(n-l,l)}(Q(\Delta))=0$. If $l=mp$ then, \[ r_{(n-l,l)}(Q(\Delta))= Q([\nu^p\rho,\rho^{k-1}\rho]) \otimes Q([\rho,\nu^{p-1}\rho]). \]
\end{lemma}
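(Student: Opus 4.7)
The plan is to compute $r_{(n-l,l)}(Q(\Delta))$ by passing through the finest Jacquet module: the geometric lemma applied to the standard module having $Q(\Delta)$ as quotient handles the vanishing case, while transitivity together with Casselman's criterion pins down the structure in the non-vanishing case.

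For the vanishing, since $Q(\Delta)$ is a quotient of $\Pi := \rho \times \nu\rho \times \cdots \times \nu^{k-1}\rho$ and $r_{(n-l,l)}$ is exact, $r_{(n-l,l)}(Q(\Delta))$ is a quotient of $r_{(n-l,l)}(\Pi)$. The Bernstein--Zelevinsky geometric lemma filters $r_{(n-l,l)}(\Pi)$ by subquotients coming from shuffles of the cuspidal factors $\nu^i\rho$ (each itself cuspidal of $\G_m$, and hence sent whole to one of the two Levi blocks); consequently the $\G_l$-block receives dimension a multiple of $m$, and when $m \nmid l$ no shuffle exists, forcing $r_{(n-l,l)}(\Pi) = 0$.

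For $l = mp$, let $P = P_{(n-l,l)}$ and let $P' = P_{(m,m,\ldots,m)}$ be the finest standard parabolic of $\G_n$, with $P' \subset P$. Transitivity of normalised Jacquet functors gives $r_{P'}(Q(\Delta)) = r_{P' \cap M}\bigl(r_P(Q(\Delta))\bigr)$, where $M = \G_{n-l} \times \G_l$. Since $Q(\Delta)$ is essentially square integrable, Casselman's criterion forces every exponent of $r_{P'}(Q(\Delta))$ into the open positive chamber for the simple roots of $A_{M'}$ in the unipotent radical of $P'$; among the $k!$ orderings of the cuspidal support $\{\rho, \nu\rho, \ldots, \nu^{k-1}\rho\}$ appearing in $r_{P'}(\Pi)$, only the strictly decreasing one satisfies all these inequalities, so
\[
r_{P'}(Q(\Delta)) \ = \ \nu^{k-1}\rho \boxtimes \nu^{k-2}\rho \boxtimes \cdots \boxtimes \rho.
\]
A cuspidal-support argument rules out cuspidal composition factors of $r_P(Q(\Delta))$ (any such factor would have to come from cuspidals of each $\G_{n_i}$, incompatible with the $\G_m$-cuspidal support of $Q(\Delta)$ unless $n_i = m$), and combined with the external-tensor description of irreducibles of the product $M$ this forces $r_P(Q(\Delta)) = \tau_1 \boxtimes \tau_2$. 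Splitting the $k$ fine factors in the transitivity equation into the first $k-p$ and the last $p$ yields the fine Jacquet profile of each $\tau_i$, and a second application of Casselman's criterion within $\G_{n-l}$ and $\G_l$ identifies $\tau_1 = Q([\nu^p\rho, \nu^{k-1}\rho])$ and $\tau_2 = Q([\rho, \nu^{p-1}\rho])$ as the unique essentially square integrable representations with these profiles.

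Non-vanishing of $r_P(Q(\Delta))$ (so that the identification above is non-trivial) follows from the adjunction $\Hom_M(r_P \pi, \sigma) \cong \Hom_G(\pi, \sigma_1 \times \sigma_2)$ with $\sigma = \tau_1 \boxtimes \tau_2$: the segments $[\nu^p\rho, \nu^{k-1}\rho]$ and $[\rho, \nu^{p-1}\rho]$ are linked and concatenate to $\Delta$, so by Zelevinsky's analysis of a product of two linked segments in standard order, $Q(\Delta)$ appears as the unique irreducible submodule of $Q([\nu^p\rho, \nu^{k-1}\rho]) \times Q([\rho, \nu^{p-1}\rho])$, yielding a one-dimensional $\Hom$. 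The main obstacle in this plan is establishing the fine Jacquet module formula for $Q(\Delta)$ rigorously from Casselman's criterion, the key combinatorial input being that the strictly decreasing ordering of $\{0,1,\ldots,k-1\}$ is the unique permutation with all adjacent differences positive; once this single-character form is available, the rest is transitivity bookkeeping together with a second application of the same principle within each smaller $\GL$-factor.
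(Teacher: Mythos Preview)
The paper does not give its own proof of this lemma; it is quoted with a reference to Zelevinsky's Proposition 9.5 and used as a black box. So there is nothing in the paper to compare against directly.

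Your argument is correct. The vanishing for $m\nmid l$ via the geometric lemma applied to $\Pi=\rho\times\nu\rho\times\cdots\times\nu^{k-1}\rho$ is clean. For $l=mp$, the key point---that $r_{P'}(Q(\Delta))$ is the single cuspidal $\nu^{k-1}\rho\boxtimes\cdots\boxtimes\rho$---follows exactly as you say: Casselman's criterion forces any exponent of $r_{P'}(Q(\Delta))$ to be strictly decreasing, and since each of the $k!$ orderings occurs with multiplicity one in $r_{P'}(\Pi)$ (the cuspidals $\nu^i\rho$ being pairwise inequivalent), the decreasing one occurs at most once in the subquotient $Q(\Delta)$. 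The transitivity and Frobenius-reciprocity steps are fine.

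One expository point: the sentence ``A cuspidal-support argument rules out cuspidal composition factors of $r_P(Q(\Delta))$ \ldots'' is really doing the job of showing that $r_{P'\cap M}$ annihilates no irreducible subquotient of $r_P(Q(\Delta))$, because every such subquotient has cuspidal support concentrated on the $(m,\ldots,m)$ Levi; hence the length of $r_P(Q(\Delta))$ is bounded by that of $r_{P'}(Q(\Delta))$, namely $1$. Saying it this way makes the logical role of that step transparent. As written, the phrase ``rules out cuspidal composition factors'' is a bit misleading (in the boundary cases $p=1$ or $k-p=1$ one of the tensor factors \emph{is} cuspidal, and that is fine).

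For comparison, Zelevinsky's own proof in \cite{ze80} proceeds via the derivative calculus and an inductive characterisation of the representations $\langle\Delta\rangle$, rather than through Casselman's criterion; your route is a standard alternative and is arguably more direct for this particular computation.
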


We shall require the following formula for the Jacquet module with respect to the opposite parabolic subgroup.
\begin{lemma} \label{second adjointness formula}
 Let the notation be as in the previous lemma. If $l$ is not divisible by $m$ then $\bar{r}_{(n-l,l)}(Q(\Delta))=0$. If $l=mp$ then, \[ \bar{r}_{(n-l,l)}(Q(\Delta))= Q([\rho,\nu^{k-p-1}\rho]) \otimes Q([\nu^{k-p}\rho,\nu^{k-1}\rho]). \]
\end{lemma}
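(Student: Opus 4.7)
The plan is to deduce the formula for $\bar r_{(n-l,l)}(Q(\Delta))$ from the already-stated formula for $r_{(n-l,l)}(Q(\Delta))$ by transporting via the Gelfand-Kazhdan involution $\theta$, which interchanges the upper and lower block parabolics.

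First, I would record the functorial compatibility
\[\bar r_{(n-l,l)}(\pi) \;\cong\; \theta_M\bigl(r_{(n-l,l)}(\theta(\pi))\bigr)\]
for every $\pi \in \Alg(\G_n)$, where $\theta_M$ denotes the componentwise Gelfand-Kazhdan involution on the Levi $M = \G_{n-l}\times\G_l$. The input to verify is that $\theta(g)=(g^{-1})^t$ sends the upper block parabolic $P_{(n-l,l)}$ to its opposite $\bar P_{(n-l,l)}$, acts on $M$ componentwise while preserving the ordering of the block factors, and satisfies $\delta_P\circ\theta = \delta_P^{-1}$ on $M$, so that the normalization factors cooperate; a direct check on coinvariants by the unipotent radical then yields the displayed isomorphism.

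Second, applying this compatibility to $\pi=Q(\Delta)$ and invoking Lemma \ref{theta is dual} together with the known identity $Q(\Delta)^\vee \cong Q(\Delta^\vee)$, the right-hand side becomes $\theta_M\bigl(r_{(n-l,l)}(Q(\Delta^\vee))\bigr)$. Writing $\sigma = \nu^{-(k-1)}\rho^\vee$ so that $\Delta^\vee = [\sigma,\nu^{k-1}\sigma]$, the previous lemma forces vanishing unless $m\mid l$; when $l=mp$, it produces
\[r_{(n-l,l)}(Q(\Delta^\vee)) \;=\; Q([\nu^p\sigma,\nu^{k-1}\sigma]) \otimes Q([\sigma,\nu^{p-1}\sigma]).\]

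Third, I apply $\theta_M$ componentwise, which amounts to taking contragredients of each irreducible factor via $\theta(Q(\Delta'))\cong Q(\Delta'^\vee)$. Substituting $\sigma^\vee=\nu^{k-1}\rho$, a short computation yields
\[Q([\nu^p\sigma,\nu^{k-1}\sigma])^\vee = Q([\rho,\nu^{k-p-1}\rho]), \qquad Q([\sigma,\nu^{p-1}\sigma])^\vee = Q([\nu^{k-p}\rho,\nu^{k-1}\rho]),\]
which assembles to the claimed formula and accounts for the swap in the order of segments compared to the previous lemma.

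The main obstacle is justifying the compatibility in the first step. One may either verify it from scratch by tracking the action of $\theta$ on $J_P$ coinvariants (using $\theta N = \bar N$ and $\delta_P\circ\theta=\delta_P^{-1}$ to get the normalizations right) or invoke Casselman's duality $(r_P(\pi))^\vee\cong r_{\bar P}(\pi^\vee)$, valid for admissible $\pi$ (which covers $Q(\Delta)$); either route reduces the entire statement to a mechanical unwinding of segment contragredients.
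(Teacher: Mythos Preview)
The paper states this lemma without proof, presumably regarding it as a routine consequence of the preceding lemma (which is cited from Zelevinsky). Your argument via the Gelfand--Kazhdan involution is correct: the key compatibility $\bar r_{(n-l,l)}(\pi)\cong\theta_M\bigl(r_{(n-l,l)}(\theta\pi)\bigr)$ holds exactly for the reasons you indicate (since $\theta$ carries $N$ to $\bar N$, preserves $M$ componentwise, and inverts $\delta_P$), and the subsequent segment bookkeeping is accurate.

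A slightly more direct route, which you might prefer, avoids contragredients altogether. Conjugation by the permutation matrix $w=\begin{pmatrix}0&I_l\\ I_{n-l}&0\end{pmatrix}$ carries $\bar P_{(n-l,l)}$ to $P_{(l,n-l)}$ and swaps the two Levi blocks, so $\bar r_{(n-l,l)}(\pi)$ is isomorphic to $r_{(l,n-l)}(\pi)$ with the tensor factors interchanged. Applying the previous lemma with $l$ replaced by $n-l=m(k-p)$ gives $r_{(l,n-l)}(Q(\Delta))=Q([\nu^{k-p}\rho,\nu^{k-1}\rho])\otimes Q([\rho,\nu^{k-p-1}\rho])$, and swapping yields the claim immediately. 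Both approaches are standard; yours has the virtue of exercising the $\theta$-machinery already set up in the paper, while the Weyl-conjugation argument is a one-liner that sidesteps duals and the modular-character check.
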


\section{Filtration for Parabolically Induced Modules} \label{section three}

In this section we recall the filtration on parabolically induced modules introduced by K.Y. Chan in the work \cite{ch22}. This filtration on a principal series of the form $\pi_1\times \pi_2$ is coarser than the Bernstein-Zelevinsky filtration but has the advantage that it allows us to maintain control on the second factor while we apply the Bernstein-Zelevinsky filtration on the first factor. This will be useful for us in the proof of Lemmas \ref{reduction lemma one} and \ref{reduction lemma two}.

We first recall the (equal rank) Fourier-Jacobi Model and Rankin-Selberg Models introduced by K.Y. Chan in the work \cite{ch22}. They arise in dealing with the restriction problem from $\GL_n(F)$ to $\GL_{n-1}(F)$ of a parabolically induced representation of $\GL_n$. Such reductions are at the basis of the Bessel models of GGP, cf. \cite[Theorem 15.1]{ggp11}.

\subsection{Fourier-Jacobi Model (see \cite[Section 2.5]{ch23})}

Let $S(F^n)$ denote the space of Schwartz functions on $F^n$ (locally constant functions with compact support). Define $\zeta^F$ to be the representation of $\G_n$ with underlying space $S(F^n)$ where the group action is defined as follows. For $g\in \G_n$ and $f\in S(F^n)$,
\[ (g.f)(x) = \nu^{-1/2}(g)f(g^{-1}.x). \]
For $\pi\in \Alg(\G_n)$, the representation $\pi\otimes \zeta^F$ of $\G_n$ is known as the equal rank Fourier-Jacobi model of $\pi$.

\subsection{Rankin-Selberg Model (see \cite[Section 2.4]{ch23})}
Let $U_r$ denote the group of upper triangular unipotent matrices of $\G_r$. For $r\geq 0$, consider the subgroup $H_{r,n}^R$ of $\G_{n+1}$ defined as, 
\[ H_{r,n}^R = \left\{ \begin{pmatrix}
g & 0 & x\\
  & 1 & v^t \\
  &   & u\\
\end{pmatrix} : g\in \G_{n-r}, x\in M_{n-r,r}, u\in U_r, v\in F^r \right \}. \] 
Define the character $\zeta_R : H_{r,n}^R \rightarrow \mathbb{C}$ as,
\[ \zeta_R(\begin{pmatrix}
g & 0 & x\\
  & 1 & v^t \\
  &   & u\\
\end{pmatrix}) = \psi(\begin{pmatrix}
1 & v^t \\
  & u\\
\end{pmatrix}) \]
where $\psi$ is a non-degenerate character on $U_{r+1}$. 

Given a smooth representation $\pi$ of $\G_{n-r}$, we trivially extend it to a representation of $H_{r,n}^R$. Then the Rankin-Selberg Model of $\pi$ is defined as, \[ \RS_r(\pi) = \ind_{H_{r,n}^R}^{\G_{n+1}} (\pi \otimes \zeta_R). \]
We point out that if $\pi\in \Alg(\G_n)$ then $\RS_0(\pi) = \ind_{\G_n}^{\G_{n+1}}\pi.$

\subsection{A Filtration for Parabolically Induced Modules}

We have the following filtration for parabolically induced modules stated in terms of the Fourier-Jacobi and Rankin-Selberg Models (see \cite[Proposition 2.4]{ch23}).

\begin{lemma} \label{filtration for parabolically induced modules}

Let $\pi_1$ and $\pi_2$ be smooth representations of $\G_{n_1}$ and $\G_{n_2}$ respectively. Then $(\pi_1\times\pi_2)|_{\G_{n_1+n_2-1}}$ has a filtration,
\[0=V_d\subsetneq V_{d-1}\subsetneq \ldots \subsetneq V_0 = (\pi_1\times\pi_2)|_{\G_{n_1+n_2-1}}\]
such that, $$ V_0/V_1 \cong \nu^{1/2}\pi_1\times\pi_2|_{\G_{n_2-1}}, $$ $$ V_1/V_2 \cong \nu^{1/2}\pi_1^{(1)}\times(\pi_2\otimes \zeta^F)$$ and for $2\leq k\leq d-1$, 
$$V_k/V_{k+1} \cong \nu^{1/2}\pi_1^{(k)}\times\RS_{k-2}(\pi_2),$$
where, $\pi_2\otimes \zeta^F$ is the Fourier-Jacobi model of $\pi_2$ and $\RS_{k-2}(\pi_2)$ is the Rankin-Selberg model of $\pi_2$. 
\end{lemma}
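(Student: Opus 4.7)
The plan is to obtain this filtration by applying the geometric lemma (Mackey theory) to the restriction of $\pi_1 \times \pi_2 = \Ind_P^G(\pi_1 \boxtimes \pi_2)$ from $G = \G_{n_1+n_2}$ to $H = \G_{n_1+n_2-1}$, where $P$ denotes the standard parabolic of $G$ with Levi $\G_{n_1} \times \G_{n_2}$ and $H$ sits inside $G$ via the standard embedding from the introduction. The filtration will be indexed by a suitable $H$-orbit stratification of the partial flag variety $G/P \cong \mathrm{Gr}(n_2, n_1+n_2)$.

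First I would parameterize and order the $H$-orbits on $G/P$. Since $H$ preserves the hyperplane $W = F^{n_1+n_2-1}$ and fixes the vector $e_{n_1+n_2}$, an $n_2$-dimensional subspace $V \subset F^{n_1+n_2}$ is classified coarsely by whether $V \subset W$ or $V \not\subset W$. The latter case is further refined by recording the position of $V \cap W$ relative to a reference flag in $W$ stabilized by the standard mirabolic of $H$, producing a stratification $O_0, O_1, \ldots, O_{d-1}$ in order of increasing codimension. By the geometric lemma, $(\pi_1 \times \pi_2)|_H$ acquires a filtration $V_0 \supset V_1 \supset \ldots \supset V_d = 0$ whose successive quotient $V_k/V_{k+1}$ is compactly induced from the stabilizer $H_k$ of a chosen representative $V_k \in O_k$, twisted by the appropriate half-power of the modular character.

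Next, for each stratum I would compute $H_k$ and match the resulting Mackey piece to the claimed expression. For $k=0$ (the stratum $V \subset W$), $H_0$ is a maximal parabolic of $H$ whose unipotent radical acts trivially on the relevant tensor factor, producing $\nu^{1/2}\pi_1 \times \pi_2|_{\G_{n_2-1}}$. For $k=1$ (where $V$ meets the line $Fe_{n_1+n_2}$ nontrivially but is otherwise generic), a Heisenberg-type unipotent piece appears in $H_1$, on which the restriction of $\pi_1 \boxtimes \pi_2$ is twisted by the character defining $\zeta^F$, yielding $\nu^{1/2}\pi_1^{(1)} \times (\pi_2 \otimes \zeta^F)$. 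For $k \geq 2$, deeper degenerations enlarge the unipotent radical of $H_k$; the twist on this radical matches the character $\zeta_R$ used in the definition of $\RS_{k-2}(\pi_2)$, while the growing codimension along the $\pi_1$-direction forces a total of $k$ Bernstein-Zelevinsky derivatives to appear, producing $\nu^{1/2}\pi_1^{(k)} \times \RS_{k-2}(\pi_2)$.

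The main obstacle is the careful bookkeeping needed to identify each Mackey piece with the explicit Fourier-Jacobi and Rankin-Selberg models: one must match the character obtained on the unipotent radical of $H_k$ (after a suitable conjugation) with the prescribed $\zeta_R$, and verify that the modular character computations consistently yield the uniform $\nu^{1/2}$ twist on $\pi_1^{(k)}$ for every $k$. One must also check that the orbit stratification admits the closure order required for Mackey's decomposition to descend to a genuine filtration with the stated ordering, which reduces to standard facts about $H$-orbits on the Grassmannian.
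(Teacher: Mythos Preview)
The paper does not give its own proof of this lemma: it is simply quoted from \cite[Proposition 2.4]{ch23}, with the surrounding section serving only to recall Chan's Fourier--Jacobi and Rankin--Selberg models and to state the filtration. So there is no proof in the paper to compare against.

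Your Mackey-theoretic plan is the natural approach and is essentially how the result is obtained in the source \cite{ch22,ch23}: one analyzes the $\G_{n_1+n_2-1}$-orbits on $P\backslash \G_{n_1+n_2}$ and identifies the resulting compact inductions. One small caution on your orbit description: the stratification giving this particular filtration is not purely a matter of how the $n_2$-plane $V$ sits relative to the hyperplane $W$, but rather of how far the mirabolic-type degeneration penetrates the $\pi_1$ block; this is why the index $k$ tracks the derivative $\pi_1^{(k)}$ while $\pi_2$ stays intact inside $\RS_{k-2}(\pi_2)$. Your sketch already anticipates this in the $k\geq 2$ case, but the phrase ``position of $V\cap W$ relative to a reference flag'' is a bit loose and would need to be made precise to match the $H^R_{k-2,n_2}$ stabilizers appearing in the definition of $\RS_{k-2}$. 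Apart from that bookkeeping, the plan is sound.
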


\section{Definition of a Good Pair} \label{section four}

Given a principal series representation of $\G_n$ induced from a cuspidal representation of a Levi subgroup and an irreducible generic representation of $\G_s$, where $s\leq n$, we wish to define the notion of when they form a good pair. We first set up the notations required for our definition.

Let $\Pi$ be a principal series representation of $\G_n$ induced from a cuspidal representation of a Levi subgroup. Suppose $\Pi=\rho_1\times \rho_2 \times \ldots \times \rho_k$ where, $\rho_1,\rho_2, \ldots,\rho_k$ are cuspidal representations of $\G_{n_i}$ ($i=1,2,\ldots,k$) such that $\sum_{i=1}^{k} n_i = n$. We first define the notion of when $\Pi$ is said to be good to a given segment $\Delta$.

\begin{definition} Let $\Pi$ be as above. Consider the segment $\Delta=[\tau,\nu^c\tau]$ where $\tau$ is a cuspidal representation of $\G_{m}$ (for some $m \in \mathbb{Z}_{>0}$) and $c$ is a nonnegative integer. We say that the principal series $\Pi$ is bad to $\Delta$ if there exists a strictly increasing sequence $s_0<s_1< \ldots <s_{c}<s_{c + 1}$ of natural numbers such that, \[ \rho_{s_t} = \nu^{t - \frac{1}{2}} \tau \] for all $t=0,1,\ldots,c, (c + 1)$.

If $\Pi$ is not bad to $\Delta$ then we say that $\Pi$ is good to $\Delta$.
    
\end{definition}

The above definition basically says that if $\Pi$ is bad to $\Delta$ then the factors $\nu^{-\frac{1}{2}}\tau,\nu^{\frac{1}{2}} \tau,\ldots$, $\nu^{c-\frac{1}{2}} \tau, \nu^{c+\frac{1}{2}} \tau$ occur in this order (with gaps allowed in between) as some factors of the principal series $\Pi$. Notice that this ordering is completely opposite to the standard ordering. 

We now define the notion of a good pair. Let $\pi$ be an irreducible generic representation of $\G_s$ for some integer $s>0$. By Zelevinsky (see \cite{ze80}) there exist segments $\Delta_1,\Delta_2,\ldots,\Delta_r$ such that $\pi = Q(\Delta_1)\times Q(\Delta_2)\times \ldots \times Q(\Delta_r)$. Naturally, no two of these segments are linked to each other. Suppose that $\Delta_i=[\tau_i,\nu^{c_i}\tau_i]$ for all $i=1,2,\ldots,r$. Here each $\tau_i$ is a cuspidal representation of some $\G_{m_i}$ and each $c_i$ is a nonnegative integer. 

We now define when $(\Pi,\pi)$ are said to form a good pair.

\begin{definition} \label{definition of good pair}

Let $\Pi$, $\pi$ and $\Delta_i$ ($i=1,2,\ldots,r$) be as above. We say that $(\Pi,\pi)$ is a good pair if the following conditions hold:
\begin{itemize}    
\item[(A)] For all $i\neq j$, $\tau_i$ and $\tau_j$ do not lie on the same cuspidal line.               
\item[(B)] The principal series $\Pi$ is good to $\Delta_j$ for all $j=1,2,\ldots,r$.
\end{itemize}    
\end{definition}

The first condition above is a condition on the generic representation $\pi$ and the second condition is a combinatorial condition on the factors of the principal series $\Pi$. If $(\Pi,\pi)$ is a good pair we say that $\Pi$ is good to $\pi$. 

\section{Proof of Theorem \ref{multiplicity one for generic}} \label{section five}
    
We first prove two key reduction lemmas using the filtration in Lemma \ref{filtration for parabolically induced modules}. These reduction lemmas allow us to replace either the first or last factor in the principal series $\Pi$.

\subsection{Two Reduction Lemmas}

\begin{lemma} \label{reduction lemma one}
Let $n\geq 2$ be an integer. Let $\Pi=\rho_1\times \rho_2 \times \ldots \times \rho_k \in \Alg(\G_n)$ where $\rho_1,\rho_2, \ldots,\rho_k$ are cuspidal representations of $\G_{n_i}$ ($i=1,2,\ldots,k$) such that $\sum_{i=1}^{k} n_i = n$. Let $\pi$ be an irreducible generic representation of $\G_{n-1}$. Suppose that $\pi = Q(\Delta_1)\times Q(\Delta_2)\times \ldots \times Q(\Delta_r)$ for some segments $\Delta_1,\Delta_2,\ldots,\Delta_r$ such that no two of these segments are linked to each other. Let $\Delta_i=[\tau_i,\nu^{c_i}\tau_i]$ for all $i=1,2,\ldots,r$. Here each $\tau_i$ is a cuspidal representation of some $\G_{m_i}$ and each $c_i$ is a nonnegative integer.

If $\rho_1 \neq \nu^{-1/2}a(\Delta_i)$ for all $i=1,2,\ldots,r$, then there exists a cuspidal representation $\rho_1^{\prime}\in \Alg(\G_{n_1})$ such that $\rho_1^{\prime}\not \in \csupp_{\mathbb{Z}}(\Pi)\cup \csupp_{\mathbb{Z}}(\nu^{-1/2}\pi)$ and,
\[ \Ext^j_{\G_{n-1}}(\Pi, \pi) = \Ext^j_{\G_{n-1}}(\rho_1^{\prime} \times \rho_2 \times \rho_3 \times \ldots \times \rho_k, \pi) \] for all integers $j\geq 0$.
    
\end{lemma}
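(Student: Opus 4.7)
The plan is to apply the filtration of Lemma \ref{filtration for parabolically induced modules} to the decomposition $\Pi = \rho_1 \times \sigma$ with $\sigma = \rho_2 \times \ldots \times \rho_k$, viewed as a $\G_{n-1}$-module. Because $\rho_1$ is cuspidal of rank $n_1$, its right Bernstein--Zelevinsky derivatives satisfy $\rho_1^{(i)} = 0$ for $1 \leq i \leq n_1 - 1$ and $\rho_1^{(n_1)} = \mathbb{C}$, so every graded piece apart from the top piece $V_0/V_1 \cong \nu^{1/2}\rho_1 \times \sigma|_{\G_{n-1-n_1}}$ depends only on $\sigma$ and not on $\rho_1$. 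Using that there are infinitely many Bernstein components of cuspidals of $\G_{n_1}$ (while only finitely many cuspidal lines are forbidden), one chooses any cuspidal $\rho_1' \in \Alg(\G_{n_1})$ with $\rho_1' \not\in \csupp_{\mathbb{Z}}(\Pi) \cup \csupp_{\mathbb{Z}}(\nu^{-1/2}\pi)$. The same filtration applied to $\rho_1' \times \sigma$ yields identical graded pieces in positions $\geq 1$, with the top piece replaced by $\nu^{1/2}\rho_1' \times \sigma|_{\G_{n-1-n_1}}$.

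By the long exact sequences in $\Ext$ coming from the two filtrations, the desired equality reduces to the vanishing
\[\Ext^j_{\G_{n-1}}(\nu^{1/2}\rho_1 \times \sigma|_{\G_{n-1-n_1}}, \pi) = 0 = \Ext^j_{\G_{n-1}}(\nu^{1/2}\rho_1' \times \sigma|_{\G_{n-1-n_1}}, \pi) \quad \text{for all } j \geq 0.\]
Parabolic induction is exact with exact left adjoint $r_P$, so its right adjoint $\bar r_P$ preserves injectives and Bernstein's second adjointness upgrades to $\Ext$; the above becomes $\Ext^j_L(\nu^{1/2}\rho_1 \boxtimes \sigma|_{\G_{n-1-n_1}}, \bar r_P(\pi))$, and similarly for $\rho_1'$, where $L = \G_{n_1} \times \G_{n-1-n_1}$. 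Since cuspidal representations of $\G_{n_1}$ are projective (and injective) in their Bernstein components, a K\"unneth-type reduction further identifies this with $\Ext^j_{\G_{n-1-n_1}}(\sigma|_{\G_{n-1-n_1}}, \Hom_{\G_{n_1}}(\nu^{1/2}\rho_1, \bar r_P(\pi)))$. The whole problem thus reduces to showing that neither $\nu^{1/2}\rho_1$ nor $\nu^{1/2}\rho_1'$ embeds as a $\G_{n_1}$-subrepresentation of $\bar r_P(\pi)$.

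For the last step I would analyze $\bar r_P(\pi)$ via Zelevinsky's geometric lemma. With $\pi = Q(\Delta_1) \times \ldots \times Q(\Delta_r)$ and the $\Delta_i$ pairwise unlinked, $\bar r_P(\pi)$ admits a filtration whose graded pieces have $\G_{n_1}$-component, by Lemma \ref{second adjointness formula}, of the form $\prod_{i=1}^{r} Q([\tau_i, \nu^{c_i - p_i}\tau_i])$ for some choice of integers $0 \leq p_i \leq c_i + 1$ (with the $i$-th factor omitted when $p_i = c_i + 1$). Condition (A) of a good pair places the $\tau_i$ on pairwise distinct cuspidal lines, so these sub-segments remain pairwise unlinked and each such product is an irreducible parabolic induction. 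A cuspidal representation of $\G_{n_1}$ can embed into such an irreducible induction only when the product collapses to a single cuspidal, which forces all but one $p_i$ to equal $c_i + 1$, the surviving factor to have length one (so $p_{i_0} = c_{i_0}$), and $\nu^{1/2}\rho_1 = \tau_{i_0} = a(\Delta_{i_0})$, i.e.\ $\rho_1 = \nu^{-1/2}a(\Delta_{i_0})$ --- excluded by hypothesis. For $\rho_1'$ the same conclusion is immediate since $\nu^{1/2}\rho_1'$ is absent from $\csupp(\pi)$ altogether.

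The main obstacle I foresee is this final structural step: one must identify the first $\G_{n_1}$-factor in every graded piece of $\bar r_P(\pi)$, confirm irreducibility using the distinct-cuspidal-lines hypothesis, and rule out embeddings of cuspidals into non-trivial parabolically induced pieces. Once this is handled, the K\"unneth reduction together with the long exact sequences close out the argument.
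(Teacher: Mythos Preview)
Your overall strategy matches the paper's: apply the filtration of Lemma~\ref{filtration for parabolically induced modules} to $\rho_1\times\sigma$, observe that the only piece depending on $\rho_1$ is the top quotient $\nu^{1/2}\rho_1\times\sigma|_{\G_{n-1-n_1}}$, pass to $\bar r_P(\pi)$ by second adjointness, and kill the resulting $\Ext$ by examining the $\G_{n_1}$-factor of the geometric-lemma pieces. So the architecture is correct.

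There is, however, a genuine gap in the final step. You invoke Condition~(A) of Definition~\ref{definition of good pair} (the $\tau_i$ lie on distinct cuspidal lines) in order to conclude that each $\G_{n_1}$-component $\prod_i Q([\tau_i,\nu^{c_i-p_i}\tau_i])$ is irreducible, and then argue that a cuspidal can embed only when the product collapses to a single cuspidal. But Lemma~\ref{reduction lemma one} does \emph{not} assume that $(\Pi,\pi)$ is a good pair; the only hypothesis is $\rho_1\neq\nu^{-1/2}a(\Delta_i)$ for all $i$. Nothing prevents two of the $\tau_i$ from living on the same cuspidal line, and in that case your irreducibility claim can fail. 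As written, your argument proves a weaker lemma than the one stated.

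The fix is simple and is exactly what the paper does: drop the irreducibility detour and compare cuspidal supports directly. Each $\G_{n_1}$-factor $N_1$ arising from the geometric lemma is a parabolic induction from cuspidals, and every irreducible subquotient of such an induction has the \emph{same} cuspidal support as $N_1$ itself. Hence if the cuspidal $\nu^{1/2}\rho_1$ were a subquotient of $N_1$, we would have $\csupp(N_1)=\{\nu^{1/2}\rho_1\}$ as multisets, forcing $N_1$ to consist of a single length-one segment $[a(\Delta_{i_0}),a(\Delta_{i_0})]$ with $\nu^{1/2}\rho_1=a(\Delta_{i_0})$, i.e.\ $\rho_1=\nu^{-1/2}a(\Delta_{i_0})$, contrary to hypothesis. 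This requires neither irreducibility of $N_1$ nor Condition~(A), and it suffices (together with projectivity of cuspidals in their block) to make all the $\Ext^j$ vanish. With this correction your proof goes through and coincides with the paper's.
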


\begin{proof}
Let $\Pi^{\prime} = \rho_2 \times \rho_3 \times \ldots \times \rho_k$. Let us suppose that $n_1 \geq 2$. By Lemma \ref{filtration for parabolically induced modules} we have the short exact sequence,
\[ 0 \rightarrow \RS_{n_1-2}(\Pi^{\prime}) \rightarrow \Pi|_{\G_{n-1}} \rightarrow \nu^{1/2}\rho_1 \times (\Pi^{\prime}|_{\G_{n-n_1-1}}) \rightarrow 0. \]
We claim that for all integers $j\geq 0$, \[ \Ext^j_{\G_{n-1}}(\nu^{1/2}\rho_1 \times (\Pi^{\prime}|_{\G_{n-n_1-1}}), \pi) = 0.\] This is because by second adjointness,
 \[ \Ext^j_{\G_{n-1}}(\nu^{1/2}\rho_1 \times (\Pi^{\prime}|_{\G_{n-n_1-1}}), \pi) = \Ext^j_{\G_{n-1}}(\nu^{1/2}\rho_1 \otimes (\Pi^{\prime}|_{\G_{n-n_1-1}}), \bar{r}_{(n_1,n-n_1)}(\pi)),\] where $\bar{r}_{(n_1,n-1-n_1)}$ denotes the Jacquet functor with respect to the opposite parabolic subgroup corresponding to the partition $(n_1,n-1-n_1)$.

By Lemma \ref{second adjointness formula} and the geometric lemma any term in the semi-simplification of $\bar{r}_{(n_1,n-1-n_1)}(\pi)$ is of the form, \[ Q([a(\Delta_{i_1}),\nu^{d_1} a(\Delta_{i_1})]) \times \ldots \times Q([a(\Delta_{i_t}),\nu^{d_t} a(\Delta_{i_t})]) \otimes \omega \] where, $d_{i_1},d_{i_2},\ldots,d_{i_t}$ are some nonnegative integers and $\omega$ is a representation of $\G_{n-1-n_1}$. Comparing cuspidal supports at $a(\Delta_{i_1})$ proves our claim. By a long exact sequence argument, we conclude that \[ \Ext^j_{\G_{n-1}}(\Pi, \pi) = \Ext^j_{\G_{n-1}}(\RS_{n_1-2}(\Pi^{\prime}), \pi) \] for all integers $j\geq 0$.

Now choose a $\rho_1^{\prime}\in \Alg(\G_{n_1})$ such that $\rho_1^{\prime}\not \in \csupp_{\mathbb{Z}}(\Pi)\cup \csupp_{\mathbb{Z}}(\nu^{-1/2}\pi)$. Repeating the entire argument for the principal series $\rho_1^{\prime} \times \rho_2 \times \rho_3 \times \ldots \times \rho_k$ instead of $\Pi$ shows that, \[ \Ext^j_{\G_{n-1}}(\rho_1^{\prime} \times \rho_2 \times \rho_3 \times \ldots \times \rho_k, \pi) = \Ext^j_{\G_{n-1}}(\RS_{n_1-2}(\Pi^{\prime}), \pi) \] for all integers $j\geq 0$. 

By the above two equations conclude that, \[ \Ext^j_{\G_{n-1}}(\Pi, \pi) = \Ext^j_{\G_{n-1}}(\rho_1^{\prime} \times \rho_2 \times \rho_3 \times \ldots \times \rho_k, \pi) \] for all integers $j\geq 0$. If $n_1=1$ we use the equal rank Fourier-Jacobi model instead of the Rankin-Selberg model throughout the proof.

\end{proof}

\begin{lemma} \label{reduction lemma two}
Let $\Pi$, $\pi$ and $\Delta_i$ ($i=1,2,\ldots,r$) be as in Lemma \ref{reduction lemma one}.  If $\rho_k \neq \nu^{1/2}b(\Delta_i)$ for all $i=1,2,\ldots,r$, then there exists a cuspidal representation $\rho_k^{\prime}\in \Alg(\G_{n_k})$ such that $\rho_k^{\prime}\not \in \csupp_{\mathbb{Z}}(\Pi)\cup \csupp_{\mathbb{Z}}(\nu^{-1/2}\pi)$ and,
\[ \Ext^j_{\G_{n-1}}(\Pi, \pi) = \Ext^j_{\G_{n-1}}(\rho_1 \times \rho_2 \times\ldots \times \rho_{k-1} \times \rho_k^{\prime}, \pi) \] for all integers $j\geq 0$.  
\end{lemma}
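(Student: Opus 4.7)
The plan is to reduce Lemma \ref{reduction lemma two} to Lemma \ref{reduction lemma one} via the Gelfand-Kazhdan involution $\theta$. The relevant properties (recalled in Section \ref{section two}) are that $\theta$ is an exact self-equivalence of $\Alg(\G_n)$ preserving the subgroup $\G_{n-1}$, so that $\Ext^j_{\G_{n-1}}(A,B)\cong \Ext^j_{\G_{n-1}}(\theta(A),\theta(B))$ for all $j\geq 0$; it sends an irreducible representation to its smooth dual by Lemma \ref{theta is dual}; and it reverses the order of a parabolic product via $\theta(\pi_1 \times \pi_2) \cong \theta(\pi_2) \times \theta(\pi_1)$ by Lemma \ref{theta on principal series}. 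Consequently, replacing the \emph{last} factor of $\Pi$ becomes equivalent to replacing the \emph{first} factor of $\theta(\Pi)$, which is exactly what Lemma \ref{reduction lemma one} handles.

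First I will set $\widetilde{\Pi} := \theta(\Pi) = \rho_k^\vee \times \rho_{k-1}^\vee \times \cdots \times \rho_1^\vee$, a principal series of $\G_n$ induced from cuspidals of a Levi subgroup. For the second argument, using that the segments $\Delta_i$ are pairwise unlinked (so that the factors of $\pi$ commute up to isomorphism) together with $\theta(Q(\Delta_i)) \cong Q(\Delta_i)^\vee \cong Q(\Delta_i^\vee)$, I get
\[\widetilde{\pi} := \theta(\pi) \cong Q(\Delta_1^\vee)\times \cdots \times Q(\Delta_r^\vee),\]
an irreducible generic representation of $\G_{n-1}$, with the segments $\Delta_i^\vee$ pairwise unlinked as well. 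The next step is to check the hypothesis of Lemma \ref{reduction lemma one} for $(\widetilde{\Pi},\widetilde{\pi})$: since $a(\Delta_i^\vee) = \nu^{-c_i}\tau_i^\vee = b(\Delta_i)^\vee$, the assumption $\rho_k \neq \nu^{1/2}b(\Delta_i)$ for every $i$ translates, upon dualizing, into $\rho_k^\vee \neq \nu^{-1/2} a(\Delta_i^\vee)$ for every $i$, which is precisely the hypothesis of Lemma \ref{reduction lemma one} with $\rho_k^\vee$ playing the role of the first factor.

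Applying Lemma \ref{reduction lemma one} will then supply a cuspidal $\sigma \in \Alg(\G_{n_k})$ outside $\csupp_\mathbb{Z}(\widetilde{\Pi})\cup \csupp_\mathbb{Z}(\nu^{-1/2}\widetilde{\pi})$ for which
\[\Ext^j_{\G_{n-1}}(\widetilde{\Pi},\widetilde{\pi}) = \Ext^j_{\G_{n-1}}(\sigma \times \rho_{k-1}^\vee \times \cdots \times \rho_1^\vee,\widetilde{\pi})\]
for every $j\geq 0$. Applying $\theta$ once more to both sides and invoking Lemma \ref{theta on principal series} (together with $\theta^2 = \mathrm{id}$) reverses the order back and yields
\[\Ext^j_{\G_{n-1}}(\Pi,\pi) = \Ext^j_{\G_{n-1}}(\rho_1 \times \cdots \times \rho_{k-1} \times \sigma^\vee,\pi),\]
so $\rho_k' := \sigma^\vee$ will be the required cuspidal representation; the assertion $\rho_k' \notin \csupp_\mathbb{Z}(\Pi)\cup \csupp_\mathbb{Z}(\nu^{-1/2}\pi)$ will follow by dualizing the corresponding condition on $\sigma$ and using that $\csupp_\mathbb{Z}$ is invariant under integral $\nu$-twists. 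Since Lemma \ref{reduction lemma one} already supplies all the essential analytic content (the filtration of Lemma \ref{filtration for parabolically induced modules} and the Jacquet-module calculation via second adjointness), there is no new obstacle of substance here; the only thing that requires care is keeping the bookkeeping of how segments, cuspidal supports, and the extremality condition transform under duality straight.
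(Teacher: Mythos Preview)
Your proof is correct and follows essentially the same route as the paper: apply the Gelfand--Kazhdan involution $\theta$ to pass from $(\Pi,\pi)$ to $(\widetilde{\Pi},\widetilde{\pi})=(\rho_k^\vee\times\cdots\times\rho_1^\vee,\pi^\vee)$, observe that the hypothesis $\rho_k\neq\nu^{1/2}b(\Delta_i)$ dualizes to $\rho_k^\vee\neq\nu^{-1/2}a(\Delta_i^\vee)$, invoke Lemma~\ref{reduction lemma one} on the dual side, and then apply $\theta$ once more. The only cosmetic difference is that the paper re-runs the argument of Lemma~\ref{reduction lemma one} after first choosing $\rho_k'$ by hand (with the slightly redundant avoidance set $\csupp_{\mathbb Z}(\nu^{-1/2}\pi)\cup\csupp_{\mathbb Z}(\nu^{1/2}\pi)$), whereas you invoke Lemma~\ref{reduction lemma one} as a black box and set $\rho_k'=\sigma^\vee$; your observation that $\csupp_{\mathbb Z}(\nu^{1/2}\pi)=\csupp_{\mathbb Z}(\nu^{-1/2}\pi)$ (invariance under an integral $\nu$-twist) is exactly what makes the dualized avoidance condition match the one in the statement.
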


\begin{proof}
By the Gelfand-Kazhdan automorphism,
\[ \Ext^j_{\G_{n-1}}(\Pi, \pi) = \Ext^j_{\G_{n-1}}(\rho_k^{\vee} \times \rho_{k-1}^{\vee} \times\ldots \times \rho_1^{\vee}, Q(\Delta_1^{\vee})\times Q(\Delta_2^{\vee})\times \ldots \times Q(\Delta_r^{\vee})).\] Note that $\rho_k^{\vee} \neq \nu^{-1/2}a(\Delta_i^{\vee})$ for all $i=1,2,\ldots,r$. Choose a cuspidal representation $\rho_k^{\prime}\in \Alg(\G_{n_k})$ such that $(\rho_k^{\prime})^{\vee}\not \in \csupp_{\mathbb{Z}}(\Pi)\cup \csupp_{\mathbb{Z}}(\nu^{-1/2}\pi) \cup \csupp_{\mathbb{Z}}(\nu^{1/2}\pi)$. Reasoning exactly as in Lemma \ref{reduction lemma one} gives that, \[ \Ext^j_{\G_{n-1}}(\Pi, \pi) = \Ext^j_{\G_{n-1}}((\rho_k^{\prime})^{\vee} \times \rho_{k-1}^{\vee} \times\ldots \times \rho_1^{\vee}, \pi^{\vee}).\] Applying the Gelfand-Kazhdan automorphism once again proves our result.

\end{proof}

\subsection{A Combinatorial Lemma}

The combinatorial lemma in this subsection allows us to rearrange the factors of certain principal series representations so that specified cuspidal representations do not occur as the first or last factors in the given principal series.

\begin{lemma} \label{a combinatorial lemma}

Let $n, k, c \in \mathbb{Z}_{>0}$ and suppose that $n\geq 2$. Let $\rho$ be a cuspidal representation of $\G_k$. Consider the segment $\Delta=[\rho,\nu^c\rho]$. Consider the principal series representation $\Pi = \pi_1\times \pi_2 \times \ldots \times \pi_n \in \Alg(\G_{nk})$ where, $\pi_1,\pi_2,\ldots,\pi_n$ are cuspidal representations of $\G_k$ lying in the same cuspidal line as $\nu^{-1/2}\rho$. Suppose that $\Pi$ is good to $\Delta$.

Then there exists a permutation $\sigma\in S_n$ such that $\Pi \cong \pi_{\sigma(1)}\times \pi_{\sigma(2)} \times \ldots \times \pi_{\sigma(n)}$ and at least one of the following conditions hold:
\begin{itemize}
\item[(A)] $\pi_{\sigma(1)}\neq \nu^{-\frac{1}{2}}\rho$.
\item[(B)] $\pi_{\sigma(n)}\neq \nu^{\frac{1}{2} + c}\rho$.
\end{itemize}
\end{lemma}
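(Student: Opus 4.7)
The plan is to reformulate the conclusion in terms of which factors of $\Pi$ can be moved to the first or last position by a sequence of non-linked adjacent swaps. Writing $\pi_i = \nu^{b_i - 1/2}\rho$ with $b_i \in \mathbb{Z}$, two adjacent factors may be swapped without changing the isomorphism class of the principal series if and only if the corresponding cuspidals are not linked, i.e. $|b_i - b_j| \neq 1$ (a standard consequence of Zelevinsky's theory for singleton segments). Iterating this, a factor at position $i$ can be bubbled to the first (resp. last) position exactly when no $j < i$ (resp. no $j > i$) satisfies $|b_j - b_i| = 1$. It therefore suffices to prove that at least one of
\begin{itemize}
\item[(A)] there exists $i$ with $b_i \neq 0$ and $|b_j - b_i| \neq 1$ for all $j < i$,
\item[(B)] there exists $i$ with $b_i \neq c + 1$ and $|b_j - b_i| \neq 1$ for all $j > i$,
\end{itemize}
holds under the assumption that $\Pi$ is good to $\Delta$.

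I will argue by contradiction, assuming both (A) and (B) fail. Applying the failure of (A) to $i = 1$ (where the quantifier ``$j < 1$'' is vacuous) forces $b_1 = 0$. Starting from $k_0 = 1$, the failure of (B) applied at $k_0$ (we have $b_{k_0} = 0 \neq c + 1$ since $c \geq 1$) yields some $k_1 > k_0$ with $|b_{k_1} - b_{k_0}| = 1$. If $b_{k_1} \neq c + 1$, I reapply the failure of (B) at $k_1$ to produce $k_2 > k_1$ with $|b_{k_2} - b_{k_1}| = 1$, and continue. Since the $k_t$ are strictly increasing and bounded by $n$, the procedure must stop, and it can only stop upon reaching some $k_p$ with $b_{k_p} = c + 1$.

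The value sequence $b_{k_0} = 0, b_{k_1}, \ldots, b_{k_p} = c + 1$ is then a walk on $\mathbb{Z}$ with $\pm 1$ steps from $0$ to $c + 1$. A discrete intermediate-value argument lets me extract indices $0 = i_0 < i_1 < \cdots < i_{c+1} \leq p$ with $b_{k_{i_t}} = t$, by taking $i_t$ to be the first index after $i_{t-1}$ at which the walk attains the value $t$: the sub-walk from $k_{i_{t-1}}$ onward runs from value $t - 1$ to $c + 1 \geq t$ with unit steps, so it must cross $t$. Setting $s_t := k_{i_t}$ produces a strictly increasing sequence of positions with $\pi_{s_t} = \nu^{t - 1/2}\rho$ for $t = 0, 1, \ldots, c + 1$, which is precisely the forbidden pattern in the definition of ``$\Pi$ is good to $\Delta$'' --- the desired contradiction. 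The only somewhat delicate point is the opening reduction, where the ``bubble to the end'' characterization must be justified from the non-linked commutation lemma; once that is in place, the rest is a clean walk-extraction in the spirit of a discrete IVT.
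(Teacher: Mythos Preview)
Your argument is correct and is cleaner than the paper's. The only direction of your ``bubble'' characterization that you actually use is the sufficient one: if no $j<i$ (resp.\ $j>i$) is linked to $\pi_i$, then $\pi_i$ can be moved to the first (resp.\ last) slot by successive adjacent swaps, each between non-linked singleton segments; this is exactly what the paper also invokes. After that, your unified contradiction argument --- force $b_1=0$ via the failure of (A), grow a strictly increasing chain of positions via the failure of (B) whose $b$-values form a $\pm 1$ walk terminating at $c+1$, then extract by discrete IVT a strictly increasing subsequence of positions realizing the values $0,1,\ldots,c+1$ --- produces the forbidden pattern witnessing that $\Pi$ is bad to $\Delta$.

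The paper instead proceeds by explicit case analysis. It first reduces to $\pi_1=\nu^{-1/2}\rho$, $\pi_n=\nu^{c+1/2}\rho$, $c\geq 1$, then splits into two cases: (i) some value in $\{0,1,\ldots,c+1\}$ does not occur among the $b_i$, in which case the factors split into two mutually unlinked blocks and one can slide an appropriate factor to an endpoint; (ii) all these values occur, in which case one looks at the first occurrences $k_j$ of each value $j$ and uses the first inversion $k_s>k_{s+1}$ to engineer a rearrangement, with a further sub-case split. Your walk-plus-IVT argument subsumes both cases at once and avoids the somewhat delicate sub-case (2B) in the paper. The trade-off is that the paper's proof is constructive in the sense of explicitly naming which factor to move and where, whereas yours is packaged as a contradiction; but for the purposes of the downstream application (Theorem~\ref{multiplicity one for generic}) only the existence statement is needed, so your route loses nothing.
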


\begin{proof}
 We set $S(n) = \{ \pi_1,\pi_2,\ldots,\pi_n \}$ and $S_0 = \{ \nu^{-\frac{1}{2}}\rho,\nu^{\frac{1}{2}} \rho,\ldots,\nu^{\frac{1}{2} + c}\rho \}$. When $n=2$ the lemma is clearly true so we assume that $n\geq 3$. If either $\pi_1 \neq \nu^{-\frac{1}{2}}\rho$ or $\pi_n \neq \nu^{\frac{1}{2}+c}\rho$ then there is nothing to prove. Therefore throughout the proof, we assume that $\pi_1 = \nu^{-\frac{1}{2}}\rho$ and $\pi_n = \nu^{\frac{1}{2}+c}\rho$. If $\pi_1 = \nu^{-\frac{1}{2}}\rho$ and $\pi_n = \nu^{\frac{1}{2}+c}\rho$ and $c=0$ then $\Pi$ is bad to $\Delta$. This contradiction implies that the lemma is true for $c=0$. So from now on we assume that $c\geq 1$. We divide the proof of the lemma into two cases depending on whether $S_0\subset S(n)$ or $S_0\not \subset S(n)$. 

\textbf{Case 1} - Suppose that $S_0\not \subset S(n)$.

Then there exists an $i_0 \in \{ 1, 2,\ldots,c \}$ such that $\nu^{-\frac{1}{2} + i_0}\rho \not \in S(n)$. Consider the following two disjoint subsets of $S(n)$. 
\[ T_1 = \{ \pi \in S(n) | \pi = \nu^{-\frac{1}{2}+j} \rho \text{ for some } j < i_0, \text{ such that } j-i_0\in \mathbb{Z} \}. \]
\[ T_2 = \{ \pi \in S(n) | \pi = \nu^{-\frac{1}{2}+j} \rho \text{ for some } j > i_0, \text{ such that } j-i_0\in \mathbb{Z} \}. \]
Note that $S(n) = T_1\cup T_2$. Also, no element of $T_1$ is linked to an element of $T_2$ and versa. Note that $\pi_1 = \nu^{-\frac{1}{2}}\rho$ lies in $T_1$ and therefore $T_1$ is non-empty. Let $M$ be the largest integer such that $\pi_{M}\in T_1$. If $M=n$ then we are done since condition (B) is satisfied. If $M<n$ then since $\pi_{M}$ is not linked to $\pi_j$ for $j>M$ we can successively switch the positions of $\pi_{M}$ with $\pi_{M + 1},\pi_{M + 2},\ldots$ so on until $\pi_n$. The new principal series that we obtain looks like,
\[\Pi \cong \pi_1 \times \pi_2 \times \ldots \times \pi_{M-1}\times \pi_{M + 1} \times \pi_{M + 2}\times \ldots \times \pi_n \times \pi_{M}.\] Now, condition (B) is clearly satisfied and we are done.

Note that in this case, we can also ensure that condition (A) is satisfied by considering the smallest integer $M'$  integer such that $\pi_{M'}\in T_2$. We then switch $\pi_{M'}$ with the factors to its left until it occupies the first position.

\textbf{Case 2} - Suppose that $S_0 \subset S(n)$. For $j= 1,2,\ldots,c$ let $k_j$ be the smallest integer such that $\pi_{k_j}=\nu^{-\frac{1}{2}+j}\rho$. If $k_1 < k_2 < \ldots < k_{c}$ then the presence of the sequence $\pi_{1}, \pi_{k_1},\ldots,\pi_{k_c},\pi_{k_n}$ implies that $\Pi$ is bad to $\Delta$ which is a contradiction. Therefore there exists an integer $i$ such that $k_i > k_{i+1}$. Let $s$ be the smallest integer such integer. We now consider two sub-cases.

\textbf{Case 2(A)} - Suppose that $\pi_{k_{s+1}}\neq \nu^{-\frac{1}{2}+c}\rho$. By the minimality of $s$, we conclude that $\pi_{k_{s+1}}$ is not linked to any of the factors to its left. By consecutively switching the position of $\pi_{k_{s+1}}$ with the factor to its left we can ensure that it occupies the first position and condition (A) is satisfied.

\textbf{Case 2(B)} - Suppose that $\pi_{k_{s+1}}= \nu^{-\frac{1}{2}+c}\rho$. This means that $\pi_{k_{s}}= \nu^{-\frac{3}{2}+c}\rho$. Consider the following set. 
\[ T = \{ \pi_j \in S(n) | j\geq k_{s+1} \text{ and } \pi_j = \nu^{-\frac{1}{2}+j} \rho \text{ for some } j \leq (c-1) \}. \] Since, $\pi_{k_{s}}\in T$ we conclude that $T$ is non-empty. Let $M$ be the largest integer such that $\pi_{M}\in T$. Note that none of $\pi_{M+1},\pi_{M+2},\ldots,\pi_{n-1}$ is equal to $\nu^{-\frac{1}{2}+c}\rho$. This is because if $\pi_{M'}=\nu^{-\frac{1}{2}+c}\rho$ for some integer $(M+1)\leq M' \leq (n-1)$ then the presence of the sequence $\pi_{1}, \pi_{k_1},\ldots,\pi_{k_{s-1}},\pi_{k_{s}},\pi_{M'},\pi_{k_n}$ implies that $\Pi$ is bad to $\Delta$.

Now by the maximality of $M$, we conclude that $\pi_M$ is not linked to any factor to its right. By consecutively switching the positions of $\pi_{M}$ with $\pi_{M + 1},$ $\pi_{M + 2},\ldots$ so on until $\pi_n$ we can ensure that condition (B) is satisfied.
\end{proof}

\subsection{Proof of Theorem \ref{multiplicity one for generic}}

We recall the statement of the main theorem.

\begin{theorem} \label{multiplicity one for generic version two}

Let $n\geq 2$ be an integer. Let $\pi$ be an irreducible generic representation of $\G_{n-1}$. Let $\Pi$ be a principal series representation of $\G_n$ parabolically induced from an irreducible cuspidal representation of a proper Levi subgroup. If $\Pi$ is good to $\pi$, then
\[ \dim \Hom_{\G_{n-1}}(\Pi,\pi) = 1,\] and for all integers $i\geq 1$,
\[ \Ext^i_{\G_{n-1}}(\Pi,\pi) = 0.\]

\end{theorem}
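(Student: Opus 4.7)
The idea is to iteratively replace cuspidal factors of $\Pi$ using the two reduction lemmas (Lemmas \ref{reduction lemma one} and \ref{reduction lemma two}), each replacement preserving all Ext groups, until $\Pi$ has been transformed into an isomorphic copy of a standard module $\Pi^{std}$. At that point, K.Y.~Chan's multiplicity one theorem for standard representations (\cite[Theorem 1.1]{ch23}) applies to give $\dim \Hom_{\G_{n-1}}(\Pi^{std},\pi)=1$ and $\Ext^i_{\G_{n-1}}(\Pi^{std},\pi)=0$ for $i\geq 1$, noting that the irreducible generic $\pi = Q(\Delta_1)\times\cdots\times Q(\Delta_r)$ with pairwise unlinked segments $\Delta_i = [\tau_i,\nu^{c_i}\tau_i]$ is itself the dual of a standard module. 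The induction is on the number $N(\Pi)$ of cuspidal factors of $\Pi$ lying in one of the active cuspidal lines $L_i = \{\nu^c \tau_i : c\in\mathbb{Z}\}$.

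For the inductive step ($N(\Pi) > 0$), pick any active line $L_{i_0}$ meeting $\Pi$. By good pair condition (A), factors in distinct cuspidal lines are unlinked, so we may rearrange $\Pi$ (preserving its isomorphism class) to bring all $L_{i_0}$-factors of $\Pi$ into a single contiguous block at one end. Within this block, good pair condition (B) says $\Pi$ is good to $\Delta_{i_0}$, and the combinatorial Lemma \ref{a combinatorial lemma} then provides a further rearrangement, using only unlinked swaps within $L_{i_0}$, so that either the first factor of $\Pi$ is not $\nu^{-1/2}\tau_{i_0}$ or the last factor is not $\nu^{c_{i_0}+1/2}\tau_{i_0}$. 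Applying Lemma \ref{reduction lemma one} or \ref{reduction lemma two} accordingly replaces that factor by an inert cuspidal $\rho'$, yielding a new principal series $\Pi^{(1)}$ with $\Ext^j_{\G_{n-1}}(\Pi,\pi) = \Ext^j_{\G_{n-1}}(\Pi^{(1)},\pi)$ for all $j\geq 0$ and $N(\Pi^{(1)}) = N(\Pi)-1$. Crucially, goodness of $\Pi^{(1)}$ to each remaining $\Delta_i$ is inherited, since any bad sub-sequence inside $\Pi^{(1)}$ would pull back to a bad sub-sequence inside $\Pi$, contradicting good pair condition (B). The inductive hypothesis then applies to $(\Pi^{(1)},\pi)$.

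For the base case $N(\Pi) = 0$, every factor of $\Pi$ is inert (in no $L_i$), so the hypothesis of Lemma \ref{reduction lemma one} applied to the first factor is automatic. Successive replacements of this kind produce a principal series whose cuspidal factors lie in pairwise distinct cuspidal lines; this principal series is an irreducible generic representation, hence a standard module in its own right, and Chan's theorem gives the desired conclusion. The main technical obstacle throughout is the orchestration of the replacements: one must ensure at every stage that the combinatorial rearrangements used are legitimate (only swapping unlinked pairs) and that the goodness property persists after each replacement. Both points rest on the precise form of Lemma \ref{a combinatorial lemma} and Definition \ref{definition of good pair}, in particular on the fact that the ``bad'' configuration in the definition of goodness is the single completely reverse-standard ordering, which is exactly what the combinatorial lemma is designed to avoid.
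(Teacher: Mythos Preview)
Your inductive strategy matches the paper's: induct on the number of factors of $\Pi$ lying on the cuspidal lines relevant to $\pi$, gather those factors into a contiguous block, invoke Lemma~\ref{a combinatorial lemma} to arrange that an end factor avoids the forbidden value, and then apply Lemma~\ref{reduction lemma one} or~\ref{reduction lemma two} to replace it. The one substantive difference is the base case $N(\Pi)=0$. The paper argues directly via the Bernstein--Zelevinsky filtration: when no factor of $\Pi$ lies on a relevant line, only the bottom (Gelfand--Graev) piece can contribute, and its projectivity together with one-dimensionality of the Whittaker space gives the result. You instead keep applying the reduction lemmas until all cuspidal factors sit on distinct lines, obtaining an irreducible generic (hence standard) module, and then quote Chan's theorem. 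Both routes are valid; the paper's is more self-contained, while yours outsources the endpoint to \cite{ch23}.

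Two small inaccuracies are worth fixing. Your active lines should be the half-integer shifts $\{\nu^{c-1/2}\tau_i : c\in\mathbb{Z}\}$ rather than $\{\nu^c\tau_i : c\in\mathbb{Z}\}$, since it is $\nu^{-1/2}a(\Delta_i)$ and $\nu^{1/2}b(\Delta_i)$ that obstruct the reduction lemmas; your later use of the combinatorial lemma shows you have the shifted line in mind. Also, ``factors in distinct cuspidal lines are unlinked'' is a general fact about cuspidals, not a consequence of condition~(A); where condition~(A) actually enters is in passing from $\rho_{\sigma(1)}\neq\nu^{-1/2}\tau_{i_0}$ to $\rho_{\sigma(1)}\neq\nu^{-1/2}\tau_i$ for \emph{all} $i$, which is the hypothesis Lemma~\ref{reduction lemma one} requires. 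Finally, when the combinatorial lemma yields case~(B), the safe factor sits at the end of the $L_{i_0}$-block, not of $\Pi$, so you must slide it past the inert tail before invoking Lemma~\ref{reduction lemma two}; your claim that goodness pulls back then rests on the observation that the combinatorial lemma's rearrangement only moves a single factor, leaving the remaining $L_{i_0}$-factors in their original relative order.
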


\begin{proof}

Let $N$ denote the cardinality of the set $\csupp_{\mathbb{Z}}(\nu^{1/2}\Pi)\cap \csupp_{\mathbb{Z}}(\pi)$. The proof is via induction on $N$. When $N=0$, we use the Bernstein- Zelevinsky filtration. The pieces other than the bottom piece in the Bernstein-Zelevinsky filtration do not contribute to $\Ext^{i}(\Pi,\pi)$. This is because for $0<j<n$, we have that $\Ext^i_{G_{n-1}}(\nu^{1/2}\Pi^{(j)},$$^{(j-1)}\pi) = 0$ by comparing cuspidal supports. (Here we are using the second adjointness result in \cite[Lemma 2.1]{cs21} for the contribution of the $j$-th piece). The bottom piece of the filtration is the Gelfand-Graev representation. Since the Gelfand-Graev representation is projective (see \cite{cs19}) we conclude that $\Ext^i_{\G_{n-1}}(\Pi,\pi) = 0$ for all integers $i\geq 1$. Since the space of Whittaker functionals of $\pi$ is one dimensional we conclude that the space  $\Hom_{\G_{n-1}}(\Pi,\pi)$ has dimension 1.

Now suppose that $N>0$. Let $\Pi=\rho_1\times \rho_2 \times \ldots \times \rho_k$ where, $\rho_1,\rho_2, \ldots,\rho_k$ are cuspidal representations of $\G_{n_i}$ ($i=1,2,\ldots,k$) such that $\sum_{i=1}^{k} n_i = n$. Since $\pi$ is generic, by Zelevinsky (\cite{ze80}) there exist segments $\Delta_1,\Delta_2,\ldots,\Delta_r$ such that $\pi = Q(\Delta_1)\times Q(\Delta_2)\times \ldots \times Q(\Delta_r)$ and no two of these segments are linked to each other. Let $\Delta_i=[\tau_i,\nu^{c_i}\tau_i]$ for all $i=1,2,\ldots,r$. Here each $\tau_i$ is a cuspidal representation of some $\G_{m_i}$ and each $c_i$ is a nonnegative integer.

Let $j$ be the smallest integer such that $\rho_j$ lies in the same cuspidal line as $\nu^{-1/2}a(\Delta_s)$ for some $s\in \{1,2,\ldots,r\}$. By switching factors, we may assume that $j=1$. Also by switching factors, we can assume that for some integer 
$t\leq k$, $\rho_1,\rho_2,\ldots,\rho_t$ are the set of all cuspidal factors of $\Pi$ lying in the same cuspidal line as $\nu^{-1/2}a(\Delta_s)$. 

Since $\Pi$ is good to $\Delta_s$, the principal series $\rho_1\times\rho_2\times\ldots\times \rho_t$ is also good to $\Delta_s$. By Lemma \ref{a combinatorial lemma}, we conclude that there exists a permutation $\sigma\in S_t$ such that, $\Pi \cong \rho_{\sigma(1)}\times \rho_{\sigma(2)} \times \ldots \times \rho_{\sigma(t)}\times \rho_{t+1} \times \ldots \times \rho_k$ and either $\rho_{\sigma(1)} \neq \nu^{-1/2}a(\Delta_s)$ or $\rho_{\sigma(t)} \neq \nu^{1/2}b(\Delta_s)$.

Suppose that $\rho_{\sigma(1)} \neq \nu^{-1/2}a(\Delta_s)$. By condition (B) of Definition \ref{definition of good pair}, $\rho_{\sigma(1)} \neq \nu^{-1/2}a(\Delta_i)$ for all $i=1,2,\ldots,r$. We now invoke Lemma \ref{reduction lemma one} to conclude that, \[ \Ext^j_{\G_{n-1}}(\Pi, \pi) = \Ext^j_{\G_{n-1}}(\rho_1^{\prime} \times \rho_2 \times \rho_3 \times \ldots \times \rho_k, \pi) \] for some $\rho_1^{\prime}\not \in \csupp_{\mathbb{Z}}(\Pi)\cup \csupp_{\mathbb{Z}}(\nu^{-1/2}\pi)$. By the induction hypothesis, we are done.

On the other hand, if $\rho_{\sigma(t)} \neq \nu^{1/2}b(\Delta_s)$ we switch the position of $\rho_{\sigma(t)}$ with the factors to its right until it occupies the $k$-th position. We reason exactly as in the previous case but now invoke Lemma \ref{reduction lemma two} instead of Lemma \ref{reduction lemma one} to conclude the argument.

\end{proof}

\section{Proof of Theorem \ref{multiplicity for steinberg} and Theorem \ref{multiplicity for generalized steinberg}} \label{section six}

\subsection{Proof of Theorem \ref{multiplicity for steinberg} Part (A)}

Let notations be as in the statement of Theorem \ref{multiplicity for steinberg}. If $\Pi$ is bad to $\St_{n-1}$ then $\nu^{-(\frac{n-1}{2})}, 
\nu^{-(\frac{n-3}{2})},\ldots,\nu^{(\frac{n-1}{2})}\in \csupp(\Pi)$. Therefore the only possible principal series that is bad to $\St_{n-1}$ is $\Pi=\nu^{-(\frac{n-1}{2})} \times 
\nu^{-(\frac{n-3}{2})} \times \ldots \times \nu^{(\frac{n-1}{2})}$. By Theorem \ref{multiplicity one for generic} we are done.

\subsection{Proof of Theorem \ref{multiplicity for generalized steinberg}}

Let notations be as in the statement of Theorem \ref{multiplicity for generalized steinberg}. If $\Pi$ is bad to $Q(\Delta)$ then, $\nu^{-\frac{1}{2}}\rho,\nu^{\frac{1}{2}} \rho,\ldots, \nu^{m-\frac{1}{2}} \rho, \nu^{m+\frac{1}{2}} \rho\in \csupp(\Pi)$. This implies that $k(m+1)\leq n$. Therefore, \[ km +k \leq km +1\implies k\leq 1. \] This contradicts the fact that $k\geq 2$. We conclude that $\Pi$ is good to $Q(\Delta)$. By Theorem \ref{multiplicity one for generic} we are done.

\subsection{Proof of Theorem \ref{multiplicity for steinberg} Part (B)} \label{subsection six point three}

Given $\Pi = \nu^{-(\frac{n-1}{2})} \times 
\nu^{-(\frac{n-3}{2})} \times \ldots \times \nu^{(\frac{n-1}{2})}\in \Alg(\G_n)$ we want to show that
\[ \dim \Hom_{\G_{n-1}}(\Pi,\St_{n-1}) = n.\] 
The proof is via induction on n. For the base case $n=2$ it is known (see \cite[Theorem 7.4]{vs13}) that,
\[ \dim \Hom_{\G_1}(\nu^{-1/2}\times \nu^{+1/2},1) = 2.\]
We now assume that the theorem is true for all integers less than $n$. We denote $\xi(n) = \nu^{-(\frac{n-1}{2})} \times \ldots \times \nu^{(\frac{n-1}{2})}$. Note that $\xi(n) = \nu^{-(\frac{n-1}{2})} \times \nu^{1/2}\xi(n-1)$. We first prove that the multiplicity is bounded above by $n$, that is, 
\[ \dim \Hom_{\G_{n-1}}(\Pi,\St_{n-1}) \leq n.\]
By Lemma \ref{filtration for parabolically induced modules} we have the short exact sequence,
\[ 0 \rightarrow \nu^{1/2}\xi(n-1) \otimes \zeta^F \rightarrow \xi(n)|_{\G_{n-1}} \rightarrow \nu^{-(\frac{n-2}{2})} \times \nu^{1/2}\xi(n-1)|_{\G_{n-2}} \rightarrow 0 \]
By second adjointness,
\begin{align*}
  \Hom_{\G_{n-1}}&(\nu^{-(\frac{n-2}{2})} \times \nu^{1/2}\xi(n-1)|_{\G_{n-2}},\St_{n-1}) \\ &= \Hom_{\G_1 \times \G_{n-2}}(\nu^{-(\frac{n-2}{2})} \otimes \nu^{1/2}\xi(n-1)|_{\G_{n-2}},\nu^{-(\frac{n-2}{2})} \otimes \nu^{1/2} \St_{n-2}) \\ &= \Hom_{\G_{n-2}}( \nu^{1/2}\xi(n-1)|_{\G_{n-2}},\nu^{1/2} \St_{n-2}).   
\end{align*}
By the induction hypothesis, we conclude that
\begin{equation} \label{eq:induction}
\dim \Hom_{\G_{n-1}}(\nu^{-(\frac{n-2}{2})} \times \nu^{1/2}\xi(n-1)|_{\G_{n-2}},\St_{n-1}) = n-1.
\end{equation} If we now show that $\dim \Hom_{\G_{n-1}}(\nu^{1/2}\xi(n-1) \otimes \zeta^F,\St_{n-1}) = 1$ then we can conclude that,
\[ \dim \Hom_{\G_{n-1}}(\xi(n),\St_{n-1}) \leq n.
\]  
Choose a character $\chi$ such that $\nu^{1/2}\chi$ does not lie in the cuspidal support of $\St_{n-1}$. Applying Lemma \ref{filtration for parabolically induced modules} to $\chi \times \nu^{1/2}\xi(n-1)$, we have the short exact sequence,
\[ 0 \rightarrow \nu^{1/2}\xi(n-1) \otimes \zeta^F \rightarrow (\chi \times \nu^{1/2}\xi(n-1))|_{\G_{n-1}} \rightarrow \chi \times \nu^{1/2}\xi(n-1)|_{\G_{n-2}} \rightarrow 0 \] 
By second adjointness and comparing cuspidal supports we have that \[ \Ext^i_{\G_{n-1}}(\nu^{1/2}\chi \times \nu^{1/2}\xi(n-1)|_{\G_{n-2}}, \St_{n-1})=0\]  for all $i\geq 0$. By a long exact sequence argument we conclude that,
\[ \Hom_{\G_{n-1}}(\nu^{1/2}\xi(n-1) \otimes \zeta^F,\St_{n-1}) = \Hom_{\G_{n-1}}(\chi \times \nu^{1/2}\xi(n-1) ,\St_{n-1}). \]
The $\Hom$ space on the right hand side of the above equation has dimension $1$ by part (A) of Theorem \ref{multiplicity for steinberg}. We obtain that
\begin{equation} \label{an equation for multiplicity} \dim \Hom_{\G_{n-1}}(\nu^{1/2}\xi(n-1) \otimes \zeta^F,\St_{n-1}) = 1, 
\end{equation} which concludes the proof for the upper bound.

We now show that the multiplicity is bounded below by $n$, that is,
\[ \dim \Hom_{\G_{n-1}}(\Pi,\St_{n-1}) \geq n.\]
We use Mackey theory to restrict $\xi(n) = \nu^{-(\frac{n-1}{2})} \times \nu^{1/2}\xi(n-1)$ to $\G_{n-1}$ (see \cite[Section 5]{vs13}). Applying the geometric lemma with respect to the $(1,n-1)$ parabolic subgroup we obtain (see \cite[Section 5, Equation 5.3]{vs13}) the short exact sequence,
\[ 0 \rightarrow \tau \rightarrow \xi(n)|_{\G_{n-1}} \rightarrow \nu^{-(\frac{n-2}{2})} \times \nu^{1/2}\xi(n-1)|_{\G_{n-2}} \oplus \xi(n-1)  \rightarrow 0
 \]
Here, $\tau$ is some subrepresentation of $\xi(n)|_{\G_{n-1}}$ whose definition we do not require.
By Equation \ref{eq:induction} we have that,\[ \dim \Hom_{\G_{n-1}}(\nu^{-(\frac{n-2}{2})} \times \nu^{1/2}\xi(n-1)|_{\G_{n-2}},\St_{n-1}) = n-1.\] Since $\Hom_{\G_{n-1}}(\xi(n-1),\St_{n-1}) = \mathbb{C}$ from the above exact sequence we conclude that,
\[ \dim \Hom_{\G_{n-1}}(\xi(n),\St_{n-1}) \geq (n-1)+1 \geq n. \]
This completes the proof of part (B) of Theorem \ref{multiplicity for steinberg}.

\subsection{Proof of Theorem \ref{intermediate multiplicities}}

Let notations be as in the statement of Theorem \ref{intermediate multiplicities}. We shall use Theorem \ref{multiplicity for steinberg} part (B) to prove Theorem \ref{intermediate multiplicities}. We are given the segment $\Delta_i = [\nu^{-(\frac{n-1}{2})}, \nu^{-(\frac{n-1-2i}{2})}]$, for $i=1, 2, 3, \ldots, (n-2)$. We have the principal series $\Pi_i \in \Alg(\G_n)$ given as,
\[ \Pi_i = Q(\Delta_i) \times \nu^{-(\frac{n-3-2i}{2})} \times \nu^{-(\frac{n-5-2i}{2})} \times \ldots \times \nu^{(\frac{n-1}{2})}. \]
We want to prove that for all $i = 1, 2, 3, \ldots, (n-2)$,
\[ \dim \Hom_{\G_{n-1}}(\Pi_i,\St_{n-1}) = n - i. \] 
We first prove that the desired multiplicity is bounded above by $(n-i)$ and then show that it is bounded below by $(n-i)$ in order to conclude equality. For $0\leq p \leq (n-2)$, we let $\Delta_{1p}$ and $\Delta_{2p}$ denote the segments, 
\[\Delta_{1p} = [\nu^{-(\frac{n-2}{2})}, \nu^{-(\frac{n-2-2p}{2})}] \text{ and }\Delta_{2p} = [\nu^{-(\frac{n-4-2p}{2})},\nu^{(\frac{n-2}{2})}]\] We also denote $\xi(i) = \nu^{-(\frac{i-1}{2})} \times \ldots \times \nu^{(\frac{i-1}{2})}\in Alg(\G_i)$. Note that, $\Pi_i = Q(\Delta_i) \times \nu^{(\frac{i+1}{2})}\xi(n-i-1)$. By Lemma \ref{filtration for parabolically induced modules}, $\Pi_i|_{\G_{n-1}}$ is glued together from, 
\[ \nu^{1/2}Q(\Delta_i) \times \nu^{(\frac{i+1}{2})}\xi(n-i-1)|_{G_{n-i-2}}, \]
\[ \nu^{1/2}Q(^{(1)}\Delta_i) \times (\nu^{(\frac{i+1}{2})}\xi(n-i-1) \otimes \zeta^F), \]
and for $2\leq j\leq (i+1)$,
\[ \nu^{1/2}Q(^{(j)}\Delta_i) \times \RS_{j-2}(\nu^{(\frac{i+1}{2})}\xi(n-i-1)). \]
For $2\leq j < i+1$, by second adjointness,
\begin{align*}
 & \Hom_{\G_{n-1}}(\nu^{1/2}Q(^{(j)}\Delta_i) \times \RS_{j-2}(\nu^{(\frac{i+1}{2})}\xi(n-i-1)),\St_{n-1})  \\ & = \Hom(\nu^{1/2}Q(^{(j)}\Delta_i) \otimes \RS_{j-2}(\nu^{(\frac{i+1}{2})}\xi(n-i-1)),Q(\Delta_{1(i-j)})\otimes Q(\Delta_{2(i-j)})). 
\end{align*}  
Since, $\nu^{-(\frac{n-2}{2})} \not \in \nu^{1/2}.^{(j)}\Delta_i$ by comparing cuspidal supports we conclude that the above $\Hom$ space is equal to 0.

Similarly, we have that \[ \Hom_{\G_{n-1}}(\nu^{1/2}Q(^{(1)}\Delta_i) \times (\nu^{(\frac{i+1}{2})}\xi(n-i-1) \otimes \zeta^F),\St_{n-1}) = 0. \]
For $j=i+1$ we have that,
\[ \dim \Hom_{\G_{n-1}}(\RS_{i-1}(\nu^{(\frac{i+1}{2})}\xi(n-i-1)),\St_{n-1}) = 1. \] The above equality can be shown exactly as in Equation \ref{an equation for multiplicity} in subsection \ref{subsection six point three}. We choose a cuspidal representation of $\G_{i+1}$ instead of a character $\chi$ and the same argument works.
 
Finally, for the first piece in the above filtration,
\begin{align*}
 & \Hom_{\G_{n-1}}(\nu^{1/2}Q(\Delta_i) \times \nu^{(\frac{i+1}{2})}\xi(n-i-1)|_{\G_{n-i-2}},\St_{n-1}) \\ &= \Hom_{\G_{i+1}\times \G_{n-i-2}}(\nu^{1/2}Q(\Delta_i) \otimes \nu^{(\frac{i+1}{2})}\xi(n-i-1)|_{\G_{n-i-2}},Q(\Delta_{1i})\otimes Q(\Delta_{2i})). 
\end{align*}  
Note that $Q(\Delta_{2i}) = \nu^{(\frac{i+1}{2})}\St_{n-i-2}$. By part (B) of Theorem \ref{multiplicity for steinberg},
\[ \dim \Hom_{\G_{n-1}}(\nu^{1/2}Q(\Delta_i) \times \nu^{(\frac{i+1}{2})}\xi(n-i-1)|_{G_{n-i-2}},\St_{n-1}) = n-i-1. \]
We add up the contributions of each of the pieces in the above filtration to the desired $\Hom$ space to conclude that,
\[ \dim \Hom_{\G_{n-1}}(\Pi_i,\St_{n-1}) \leq (n - i - 1) + 1 = (n-i). \]

We now show that the desired multiplicity is bounded below by $(n-i)$. As in the previous subsection, we use Mackey theory to restrict $\Pi_i = Q(\Delta_i) \times \nu^{(\frac{i+1}{2})}\xi(n-i-1)$ to $\G_{n-1}$. Applying the geometric lemma with respect to the $(i+1,n-i-1)$ parabolic subgroup we obtain (see \cite[Section 5, Equation 5.3]{vs13}) the short exact sequence,
\begin{align*}
 0 \rightarrow \tau_i \rightarrow & \Pi_i|_{\G_{n-1}} \rightarrow \nu^{1/2}Q(\Delta_i) \times \nu^{(\frac{i+1}{2})}\xi(n-i-1)|_{\G_{n-i-2}} \\  & \oplus Q(\Delta_i)|_{\G_{i}} \times \nu^{-1/2}\nu^{(\frac{i+1}{2})}\xi(n-i-1)  \rightarrow 0     
\end{align*}
Here, $\tau_i$ is some subrepresentation of $\Pi_i|_{\G_{n-1}}$ whose definition we do not require. We have calculated above that,
\[ \dim \Hom_{\G_{n-1}}(\nu^{1/2}Q(\Delta_i) \times \nu^{(\frac{i+1}{2})}\xi(n-i-1)|_{\G_{n-i-2}},\St_{n-1}) = n-i-1. \]
We now calculate the contribution of the last term of the above exact sequence. By second adjointness,
\begin{align*}
& \Hom_{\G_{n-1}}(Q(\Delta_i)|_{\G_{i}} \times \nu^{-1/2}\nu^{(\frac{i+1}{2})}\xi(n-i-1),\St_{n-1}) \\ &= \Hom_{\G_i \times \G_{n-i-1}}(Q(\Delta_i)|_{\G_{i}} \otimes \nu^{\frac{i}{2}}\xi(n-i-1),Q(\Delta_{1(i-1))})\otimes Q(\Delta_{2(i-1)})). \end{align*}
Note that $Q(\Delta_{2(i-1)}) = \nu^{\frac{i}{2}}\St_{n-i-1}$ and therefore,
\[\Hom_{\G_{n-i-1}}(\nu^{\frac{i}{2}}\xi(n-i-1),\nu^{\frac{i}{2}}\St_{n-i-1}) = \mathbb{C}.\]
Also, it is known (see \cite[Theorem 3]{pr93}) that if $\pi_1\in \Alg(\G_{i+1})$ and $\pi_2\in \Alg(\G_i)$ are irreducible generic representations then $\Hom_{\G_i}(\pi_1,\pi_2) = \mathbb{C}$. We conclude that
\[\Hom_{\G_i}(Q(\Delta_i)|_{\G_{i}},Q(\Delta_{1(i-1))})) = \mathbb{C}\] and therefore,
\[ \dim \Hom_{\G_{n-1}}(Q(\Delta_i)|_{\G_{i}} \times \nu^{-1/2}\nu^{(\frac{i+1}{2})}\xi(n-i-1),\St_{n-1}) = 1. \]
We add up the contributions of the last two terms in the above short exact sequence to conclude that,
\[ \dim \Hom_{\G_{n-1}}(\Pi_i,\St_{n-1}) \geq (n - i - 1) + 1 = (n-i). \]
This completes the proof of the theorem.

\begin{remark}
We have shown that the intermediate multiplicities are attained for specific principal series $\Pi_i$ defined above. Since \[ \Hom_{\G_{n-1}}(\Pi_i,\St_{n-1})\cong \Hom_{\G_{n-1}}(\theta(\Pi_i),\theta(\St_{n-1}))\cong \Hom_{\G_{n-1}}(\theta(\Pi_i),\St_{n-1})\] we obtain another list of principal series $\pi_i^{\prime} = \theta(\Pi_i)$ for which the intermediate multiplicities are attained. We record our observation in the following corollary.
\end{remark}

\begin{corollary}
Let $n\geq 3$ be an integer. For each integer $i = 1, 2, \ldots, (n-2)$, consider the segment $\Delta_i^{\prime} = [\nu^{(\frac{n-1-2i}{2})},\nu^{(\frac{n-1}{2})}]$. Let $\Pi_i^{\prime} \in \Alg(\G_n)$ denote the principal series representation,
\[ \Pi_i^{\prime} = \nu^{-(\frac{n-1}{2})} \times \nu^{-(\frac{n-3}{2})} \times \ldots \times \nu^{(\frac{n-3-2i}{2})} \times Q(\Delta_i^{\prime}). \]
Then for all $i = 1, 2, \ldots, (n-2)$,
\[ \dim \Hom_{\G_{n-1}}(\Pi_i^{\prime},\St_{n-1}) = n - i. \]    
\end{corollary}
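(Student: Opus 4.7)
The plan is to obtain this Corollary as a direct consequence of Theorem \ref{intermediate multiplicities} via the Gelfand--Kazhdan involution $\theta$, exactly as suggested in the Remark preceding the statement. Since $\theta$ is exact and restricts to an automorphism of $\G_{n-1}$, we have
\[ \Hom_{\G_{n-1}}(\Pi_i,\St_{n-1}) \cong \Hom_{\G_{n-1}}(\theta(\Pi_i),\theta(\St_{n-1})). \]
So it suffices to identify $\theta(\Pi_i)$ with $\Pi_i^{\prime}$ and $\theta(\St_{n-1})$ with $\St_{n-1}$.

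First I would check that $\theta(\St_{n-1}) \cong \St_{n-1}$. By Lemma \ref{theta is dual}, $\theta(\St_{n-1}) \cong \St_{n-1}^{\vee}$. Writing $\St_{n-1} = Q(\Delta)$ for the self-dual symmetric segment $\Delta = [\nu^{-(n-2)/2},\nu^{(n-2)/2}]$ built from the trivial character, one has $\Delta^{\vee} = \Delta$ and hence $\St_{n-1}^{\vee} = Q(\Delta^{\vee}) = \St_{n-1}$.

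Next I would compute $\theta(\Pi_i)$. Iterating Lemma \ref{theta on principal series} reverses the factors, so
\[ \theta(\Pi_i) \cong \theta\!\left(\nu^{(\frac{n-1}{2})}\right) \times \cdots \times \theta\!\left(\nu^{-(\frac{n-3-2i}{2})}\right) \times \theta(Q(\Delta_i)). \]
Each character $\nu^{c}$ satisfies $\theta(\nu^{c}) = \nu^{-c}$, so the first $(n-1-i)$ factors become $\nu^{-(\frac{n-1}{2})} \times \nu^{-(\frac{n-3}{2})} \times \cdots \times \nu^{(\frac{n-3-2i}{2})}$. For the last factor, Lemma \ref{theta is dual} gives $\theta(Q(\Delta_i)) \cong Q(\Delta_i)^{\vee} = Q(\Delta_i^{\vee})$, and one checks directly that $\Delta_i^{\vee} = [\nu^{(\frac{n-1-2i}{2})},\nu^{(\frac{n-1}{2})}] = \Delta_i^{\prime}$. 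Therefore $\theta(\Pi_i) \cong \Pi_i^{\prime}$.

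Combining these two identifications with Theorem \ref{intermediate multiplicities} yields
\[ \dim \Hom_{\G_{n-1}}(\Pi_i^{\prime},\St_{n-1}) = \dim \Hom_{\G_{n-1}}(\Pi_i,\St_{n-1}) = n-i, \]
which is the desired statement. There is no real obstacle here: the argument is purely bookkeeping about the action of $\theta$ on parabolically induced representations and on Langlands/Zelevinsky data for segments, all of which is already available from Lemmas \ref{theta is dual} and \ref{theta on principal series}.
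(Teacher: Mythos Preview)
Your proposal is correct and follows exactly the approach indicated in the paper: the corollary is deduced from Theorem \ref{intermediate multiplicities} by applying the Gelfand--Kazhdan involution $\theta$, using Lemmas \ref{theta is dual} and \ref{theta on principal series} to identify $\theta(\Pi_i)\cong\Pi_i'$ and $\theta(\St_{n-1})\cong\St_{n-1}$. The paper's own argument is contained entirely in the preceding Remark, and your write-up simply makes the bookkeeping explicit.
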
     

\section{Proof of Theorem \ref{list} and Theorem \ref{list of non generic}} \label{section seven}

We shall require the following two propositions.

\begin{proposition} \label{proposition for generic}
Let $n\geq 2$ be an integer. Given segments $\Delta_1,\Delta_2,\ldots,\Delta_k$ consider the principal series representation $\pi_1 = Z(\Delta_1)\times Z(\Delta_2) \times \ldots \times Z(\Delta_k)$ of $\G_n$. Let $\pi_2$ be an irreducible generic representation of $\G_{n-1}$ such that,
\[ \Hom_{\G_{n-1}}(Z(\Delta_1)\times Z(\Delta_2) \times \ldots \times Z(\Delta_k),\pi_2) \neq 0. \]
Then for all $i=1, 2, \ldots, k$, $l_r(\Delta_i)\leq 2$.
\end{proposition}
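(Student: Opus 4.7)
The plan is to argue by contrapositive: assuming $l_r(\Delta_{i_0}) \geq 3$ for some index $i_0$, I will show $\Hom_{\G_{n-1}}(\pi_1,\pi_2) = 0$. After reordering the tensor factors (the statement concerns the multiset $\{\Delta_1,\ldots,\Delta_k\}$ and is insensitive to the ordering), we may assume $i_0 = 1$ and write $\pi_1 = Z(\Delta_1) \times W$, where $W = Z(\Delta_2) \times \cdots \times Z(\Delta_k)$ and $\Delta_1 = [\rho_1, \nu^c \rho_1]$ with $c \geq 2$.

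The main tool is Chan's filtration from Lemma \ref{filtration for parabolically induced modules} applied to $\pi_1|_{\G_{n-1}}$. By Lemma \ref{derivative of ZDelta}, the right derivatives $Z(\Delta_1)^{(p)}$ vanish except at the single index $p = r_1 := n(\rho_1)$, where the derivative equals $Z(\Delta_1^-)$. Consequently all pieces of Chan's filtration collapse except two: the top piece
$$V_0/V_1 \cong \nu^{1/2} Z(\Delta_1) \times W|_{\G_{n - n_1 - 1}},$$
where $n_1 = l_a(\Delta_1)$, and one intermediate piece of the form $\nu^{1/2} Z(\Delta_1^-) \times \RS_{r_1 - 2}(W)$ (or $\nu^{1/2} Z(\Delta_1^-) \times (W \otimes \zeta^F)$ when $r_1 = 1$). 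By a standard long-exact-sequence argument, it suffices to show that each of these two surviving pieces has trivial $\Hom$ into $\pi_2$.

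For the top piece, second adjointness turns the computation into
$$\Hom_{\G_{n_1} \times \G_{n-1-n_1}}\!\bigl(\nu^{1/2} Z(\Delta_1) \otimes W|_{\G_{n-1-n_1}},\ \bar{r}_{(n_1, n-1-n_1)}(\pi_2)\bigr).$$
Writing $\pi_2 = Q(\Delta_1') \times \cdots \times Q(\Delta_r')$ with pairwise non-linked $\Delta_i'$, the geometric lemma together with Lemma \ref{second adjointness formula} expresses each JH component of this Jacquet module with first tensor slot of the form $\prod_j Q(\Gamma_j)$, where each $\Gamma_j$ is a prefix sub-segment of some $\Delta_i'$. I would then invoke the Langlands classification: the Langlands data of $\nu^{1/2} Z(\Delta_1)$ consists of $l_r(\Delta_1) \geq 3$ singletons forming a maximally-linked chain, while the Langlands data of any irreducible quotient of $\prod_j Q(\Gamma_j)$ inherits the non-linked structure of the $\Delta_i'$; a direct comparison rules out $\nu^{1/2} Z(\Delta_1)$ from ever occurring as such a quotient.

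For the intermediate piece I would induct on $n$, the base case $n = 2$ being vacuous since $l_a(\Delta_j) \leq 2$ forces $l_r(\Delta_j) \leq 2$ automatically. A second-adjointness manoeuvre parallel to that of the top piece reduces the Hom space to an analogous problem featuring $Z(\Delta_1^-)$ in place of $Z(\Delta_1)$ and an appropriate derivative-type representation of $\pi_2$; once $l_r(\Delta_1^-) \geq 3$ the inductive hypothesis applies, and the boundary case $l_r(\Delta_1) = 3$ is settled by a direct cuspidal-support comparison using Lemma \ref{second adjointness formula}. The main obstacle is the combinatorial Langlands-data argument in the top-piece analysis: one must show precisely that the highly-linked Langlands data of $Z(\Delta_1)$ cannot be extracted as a quotient from any tensor product of $Q(\Gamma_j)$'s arising from sub-segments of the generic $\pi_2$. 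Carefully tracking which multisegments occur in the Jacquet decomposition of a generic representation, and then matching them against the rigid Langlands data of $Z(\Delta_1)$, is the delicate step on which the whole argument hinges.
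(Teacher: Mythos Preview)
Your approach has two genuine gaps and is considerably more complicated than the paper's argument.

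First, the reordering step is unjustified. The isomorphism class of $Z(\Delta_1)\times\cdots\times Z(\Delta_k)$ \emph{does} depend on the ordering when segments are linked, so you cannot freely move the long segment into first position. The proposition is stated for the given ordering, and Chan's filtration (Lemma~\ref{filtration for parabolically induced modules}) only peels off the leftmost factor; if the offending segment sits in the middle you have no direct access to it. Second, your inductive treatment of the intermediate piece does not close. The second factor there is the Rankin--Selberg model $\RS_{r_1-2}(W)$ (or the Fourier--Jacobi model), which is a representation of $\G_{n-1-l_a(\Delta_1^-)}$ built from $W$ in a nontrivial way; after second adjointness you do not land back in a situation of the form ``$Z(\text{something})\times\cdots$ mapping to a generic representation of one smaller rank'', so the inductive hypothesis does not apply. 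Your top-piece analysis is also shakier than you suggest: prefix sub-segments of pairwise non-linked segments can easily be linked (take $\Delta'_1=[\rho,\nu^5\rho]\supset\Delta'_2=[\nu^2\rho,\nu^3\rho]$ and look at the prefixes $[\rho,\nu\rho]$ and $\{\nu^2\rho\}$), so the claimed inheritance of the non-linked structure fails.

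The paper's proof bypasses all of this by using the ordinary Bernstein--Zelevinsky filtration (Lemma~\ref{left right bz filtration}(A)) applied to the entire product $\pi_1$ at once, so no reordering is needed. One obtains $\Hom_{\G_{n-i_0}}(\nu^{1/2}\pi_1^{(i_0)},{}^{(i_0-1)}\pi_2)\neq 0$ for some $i_0$, and then invokes the fact (\cite[Corollary~2.6]{ch21}) that every simple submodule of a left derivative of a generic representation is again generic. Hence $\pi_1^{(i_0)}$ has a generic quotient. By the product rule and Lemma~\ref{derivative of ZDelta}, each piece of $\pi_1^{(i_0)}$ has the shape $\prod_j Z(\Delta_j')$ with $\Delta_j'\in\{\Delta_j,\Delta_j^-\}$; such a product can have a generic subquotient only if every $l_r(\Delta_j')\leq 1$, forcing $l_r(\Delta_j)\leq 2$. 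This is a three-line argument once the right filtration and the right lemma about derivatives of generic representations are in hand; the key idea you are missing is to take derivatives of the whole product simultaneously rather than isolating one factor.
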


\begin{proof}

By the Bernstein-Zelevinsky filtration (see \cite[Lemma 3.2]{cs21}), \[ \Hom_{\G_{n-1}}(\nu^{1/2}\pi_1^{(i_0)} , ^{(i_0-1)}\pi_2) \neq 0 \] for some $i_0 \in \{ 1, 2, \dots, n \}$. By \cite[Corollary 2.6]{ch21}, any simple submodule of $^{(i_0-1)}\pi_2$ is generic. Hence from the above equation we conclude that $\pi_1^{(i_0)}$ has a non-degenerate quotient.

But by the product rule for derivatives and Lemma \ref{derivative of ZDelta} we have that $\pi_1^{(i_0)}$ is glued to together from subquotients of the form, \[ Z(\Delta_1^{\prime})\times Z(\Delta_2^{\prime}) \times \ldots \times \ldots Z(\Delta_k^{\prime}), \] where $\Delta_j^{\prime} = \Delta_j$ or $\Delta_j^{-}$. By the product rule for derivatives and Lemma \ref{derivative of ZDelta}, if a subquotient of a principal series of the above form is non-degenerate then $l_r(\Delta_j^{\prime}) \leq 1$ for all $j=1,2, \ldots, k$. This implies that \[ l_r(\Delta_j) \leq 2 \]  for all $j=1,2, \ldots, k$.

\end{proof}

\begin{proposition} \label{proposition for list}

Let $n,m, k,s\in \mathbb{Z}_{\geq 0}$ such that $m,s\geq 1$ and $k\geq 2$. Let $n=ms+1$. Let $\Delta_1,\Delta_2,\ldots,\Delta_k$ be segments such that the $\Delta_i\cap \Delta_j= \phi$ for all $i,j\in \{ 1,2,\ldots, k\}$ with $i\neq j$ and $\sum_{r=1}^{k}l_a(\Delta_r)=n$.  Consider the principal series representation $\pi_1 = Z(\Delta_1)\times Z(\Delta_2) \times \ldots \times Z(\Delta_k)$ of $\G_n$. Let $\Delta$ be the segment $\Delta = [\rho,\nu^{s-1}\rho]$ where, $\rho$ is a cuspidal representation of $G_m$. Suppose that at least two  segments in the multiset $\mathfrak{m} = \{ \Delta_1,\Delta_2,\ldots,\Delta_k \}$ have relative length greater than 1. Then,
\[ \Hom_{\G_{n-1}}(Z(\Delta_1)\times Z(\Delta_2) \times \ldots \times Z(\Delta_k),Q(\Delta)) = 0. \]

\end{proposition}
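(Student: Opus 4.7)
I will prove Proposition~\ref{proposition for list} by bounding $\Hom_{\G_{n-1}}(\pi_1, Q(\Delta))$ via the left Bernstein--Zelevinsky filtration from Lemma~\ref{left right bz filtration}(A) and then eliminating each term using the Jacquet module structure of the essentially square integrable representation $Q(\Delta)$ given by Lemma~\ref{second adjointness formula}. The strategy is in the same spirit as the proof of Proposition~\ref{proposition for generic}, but further exploits that $Q(\Delta)$ is irreducible and essentially square integrable, so its derivatives and opposite Jacquet modules are themselves irreducible of a very restricted form.

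First, Lemma~\ref{left right bz filtration}(A) gives
\[
\dim \Hom_{\G_{n-1}}(\pi_1, Q(\Delta)) \;\leq\; \sum_{i \geq 1} \dim \Hom_{\G_{n-i}}\bigl(\nu^{1/2}\pi_1^{(i)}, \, {}^{(i-1)}Q(\Delta)\bigr).
\]
Combining Lemma~\ref{derivative of ZDelta}-type computations for $Q$ of a segment, $^{(i-1)}Q(\Delta) = Q([\rho, \nu^{s-1-j}\rho])$ when $i = jm+1$ for some $j \in \{0,\ldots,s\}$ and vanishes otherwise. By the product rule for right derivatives combined with Lemma~\ref{derivative of ZDelta}, $\pi_1^{(jm+1)}$ is glued from pieces $\sigma_\epsilon := Z(\Delta_1^{\epsilon_1}) \times \cdots \times Z(\Delta_k^{\epsilon_k})$ with $\epsilon_r \in \{0, -\}$, $\sum_{\epsilon_r = -} n(\rho_r) = jm+1$, and the convention $Z(\Delta_r^-) = \mathbb{C}$ (the trivial representation of $\G_0$) when $l_r(\Delta_r) = 1$. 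It therefore suffices to show $\Hom_{\G_{(s-j)m}}(\nu^{1/2}\sigma_\epsilon, Q([\rho, \nu^{s-1-j}\rho])) = 0$ for every $j$ and every $\epsilon$.

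Next, iterating second adjointness together with Lemma~\ref{second adjointness formula} identifies this Hom with a Hom on the Levi $\prod_r \G_{l_a(\Delta_r^{\epsilon_r})}$ into the Jacquet module $\bar r(Q([\rho,\nu^{s-1-j}\rho]))$, which vanishes unless each $l_a(\Delta_r^{\epsilon_r})$ is divisible by $m$ and is otherwise the irreducible tensor product $\bigotimes_r Q(\Delta'_r)$, where $\{\Delta'_r\}$ is a contiguous partition of $[\rho,\nu^{s-1-j}\rho]$ into cuspidal segments of $\G_m$. Nonvanishing of this Hom between irreducibles forces $\nu^{1/2} Z(\Delta_r^{\epsilon_r}) \cong Q(\Delta'_r)$ for every $r$; since $Z(\Delta)$ and $Q(\Delta)$ agree only when $\Delta$ is a single cuspidal, this forces $l_r(\Delta_r^{\epsilon_r}) \leq 1$ for every $r$. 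In particular, every $\Delta_r$ with $l_r(\Delta_r) \geq 2$ must satisfy $\epsilon_r = -$ together with $l_r(\Delta_r) = 2$ exactly.

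The final step is a cuspidal support argument exploiting the disjointness of the $\Delta_r$'s: the cuspidal support matching gives $\bigsqcup_r \Delta_r^{\epsilon_r} = \{\nu^{-1/2}\rho, \nu^{1/2}\rho, \ldots, \nu^{s-3/2-j}\rho\}$, a contiguous set of $(s-j)$ cuspidals of $\G_m$. For any differentiated $\Delta_r = [\nu^{a_r}\rho, \nu^{a_r+1}\rho]$ with $l_r(\Delta_r)=2$, the left endpoint $\nu^{a_r}\rho$ lies in this set, while its removed right endpoint $\nu^{a_r+1}\rho$ must lie outside it (otherwise $\nu^{a_r+1}\rho$ would also appear as the cuspidal of some $\Delta_{r'}^{\epsilon_{r'}}$ with $r' \neq r$, violating disjointness). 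This forces $a_r = s - 3/2 - j$, so at most one $\Delta_r$ with $l_r(\Delta_r) = 2$ can appear, contradicting the hypothesis that at least two segments have $l_r > 1$. The main obstacle I anticipate is the careful bookkeeping around indices $r$ with $l_r(\Delta_r) = 1$ and $\epsilon_r = -$: these contribute a trivial $\G_0$-factor that drops out of the Levi Jacquet matching but still consumes $n(\rho_r)$ units of the derivative budget, and one must check that such terms do not enable extra flexibility that could spoil the uniqueness of $a_r$.
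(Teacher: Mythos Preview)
Your proof is correct and follows essentially the same route as the paper's: reduce via the Bernstein--Zelevinsky filtration (Lemma~\ref{left right bz filtration}(A)) and the product rule for derivatives of the $Z(\Delta_r)$'s to a piece $\sigma_\epsilon$ mapping onto a left derivative of $Q(\Delta)$, and then derive a contradiction from a cuspidal-support ``hole'' created by truncating a long segment, using the pairwise disjointness of the $\Delta_r$.

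The organizational differences are minor. The paper first reduces without loss of generality to the case where exactly two segments $\Delta_p,\Delta_q$ have relative length $>1$ (by replacing the remaining $Z(\Delta_i)$ by products of cuspidals), then observes that the contributing piece must have a generic quotient, forcing both $\Delta_p,\Delta_q$ to be truncated; writing $\Delta_p=[\nu^a\rho,\nu^b\rho]$ and $\Delta_q=[\nu^c\rho,\nu^d\rho]$ with $a<b<c$, disjointness makes $\nu^b\rho$ absent from the cuspidal support while $\nu^a\rho,\nu^c\rho$ are present, contradicting that $\csupp(Q(\Delta^{(i_0-1)}))$ is a segment. You instead skip the WLOG step and use iterated second adjointness with Lemma~\ref{second adjointness formula} to force $\nu^{1/2}Z(\Delta_r^{\epsilon_r})\cong Q(\Delta'_r)$ factor by factor, hence $l_r(\Delta_r^{\epsilon_r})\leq 1$; your contradiction then pins the removed right endpoint of each length-$2$ segment to the top of the interval. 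Both contradictions are the same phenomenon viewed slightly differently, and your more explicit Levi-matching step recovers exactly what the paper's ``generic quotient'' observation gives. The bookkeeping issue you flag (length-$1$ segments with $\epsilon_r=-$) is harmless: such factors simply vanish from $\sigma_\epsilon$ and from the Levi, and the disjointness argument showing $\nu^{a_r+1}\rho\notin\csupp(\sigma_\epsilon)$ is unaffected.
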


\begin{proof}
Since any of the factors $Z(\Delta_i)$ of $\pi_1$ is a quotient of a product of cuspidal representations ($b(\Delta_i) \times \nu^{-1}b(\Delta_i) \times \nu^{-2}b(\Delta_i) \times \ldots \times a(\Delta_i) \twoheadrightarrow Z(\Delta_i)$), we can without loss of generality assume that exactly two segments $\Delta_p$ and $\Delta_q$ in $\{ \Delta_1,\Delta_2,\ldots,\Delta_k \}$ have a relative length greater than one. Set $\pi_2 = Q(\Delta)$. For the sake of contradiction assume that $\Hom_{\G_{n-1}}(\pi_1,\pi_2) \neq 0$. 

By the Bernstein-Zelevinsky filtration, $\Hom_{\G_{n-1}}(\nu^{1/2}\pi_1^{(i_0)} , ^{(i_0-1)}\pi_2) \neq 0$ for some $i_0 \in \{ 1, 2, \dots, n \}$. By Lemma \ref{derivative of ZDelta}, $\pi_1$ is not generic and therefore $i_0 \neq n$. Note that, $^{(i_0-1)}\pi_2 = Q(\Delta^{(i_0-1)})$ is generic and therefore $\pi_1^{(i_0)}$ must also be generic. But by Lemma \ref{derivative of ZDelta}, $Z(\Delta_p)$ and $Z(\Delta_q)$ are not generic. Therefore $\pi_1^{(i_0)}$ must be of the form,
\[ \pi_1^{(i_0)} = Z(\Delta_1)^{\prime}\times \ldots \times Z(\Delta_{p-1})^{\prime} \times Z(\Delta_p^{-}) \times \ldots \times Z(\Delta_{q-1})^{\prime} \times Z(\Delta_q^{-}) \times \ldots Z(\Delta_k)^{\prime}. \]
Here $Z(\Delta_j)^{\prime} = Z(\Delta_j)$ or the trivial representation of $G_0$. Since, 
\[ \Hom_{\G_{n-1}}(\nu^{1/2}\pi_1^{(i_0)} , Q(\Delta^{(i_0-1)})) \neq 0 \] by comparing cuspidal supports we conclude that $\Delta_p=[\nu^a\rho,\nu^b\rho]$ and $\Delta_p=[\nu^c\rho,\nu^d\rho]$ for some real numbers $a,b,c,d$ such that $b-a,d-c\in \mathbb{Z}_{>0}$. Also since $\Delta^{(i_0-1)}$ is a segment, the nonzeroness of the above $\Hom$ space implies that $a-c\in \mathbb{Z}$. Let us assume $a<c$. The case $a>c$ can be handled similarly. Since $a<c$ and $\Delta_p\cap \Delta_q = \phi$ we conclude that $b<c$. 

Since $\Hom_{\G_{n-1}}(\nu^{1/2}\pi_1^{(i_0)} , Q(\Delta^{(i_0-1)})) \neq 0$ and $\nu^b\rho\not \in \Delta_p^{(-)}$ we conclude that $\nu^a\rho, \nu^c\rho\in$ csupp($Q(\Delta^{(i_0-1)}))$) but $\nu^b\rho \not \in$ csupp($Q(\Delta^{(i_0-1)}))$). Since $b-a,c-b \in \mathbb{Z}_{>0}$ this contradicts the fact that $\Delta^{(i_0-1)}$ is a segment. 

\end{proof}

\subsection{Proof of Theorem \ref{list}} 

We are given the principal series $\Pi = \nu^{-(\frac{n-1}{2})} \times 
\nu^{-(\frac{n-3}{2})} \times \ldots \times \nu^{(\frac{n-1}{2})}\in \Alg(\G_n)$. Let $\Delta^0$ denote the segment $\Delta^0 = [\nu^{-(\frac{n-1}{2})},\nu^{(\frac{n-1}{2})}]$. Let $\pi \in \Alg(\G_n)$ be an irreducible subquotient of $\Pi$ such that  $\Hom_{\G_{n-1}}(\pi,\St_{n-1}) = \mathbb{C}$. By an application of the Gelfand-Kazhdan automorphism, we have that $\Hom_{\G_{n-1}}(\pi^{\vee},\St_{n-1}) \neq 0$. 

Suppose that $\pi = Z(\Delta_1,\Delta_2,\ldots,\Delta_k)$ in terms of the Zelevinsky classification. Since $\pi \hookrightarrow Z(\Delta_1)\times Z(\Delta_2) \times \ldots \times Z(\Delta_k)$ therefore by taking duals, $Z(\Delta_1^{\vee})\times Z(\Delta_2^{\vee}) \times \ldots \times Z(\Delta_k^{\vee}) \twoheadrightarrow \pi^{\vee}$. We conclude that,
\[ \Hom_{\G_{n-1}}(Z(\Delta_1^{\vee})\times Z(\Delta_2^{\vee}) \times \ldots \times Z(\Delta_k^{\vee}),\St_{n-1}) \neq 0. \]

Now by Proposition \ref{proposition for generic} and Proposition \ref{proposition for list} at most one of the segments $\Delta_1,\Delta_2,\ldots,\Delta_k$ has length equal to 2. Therefore, the number of possibilities for $\pi$ such that $\Hom_{\G_{n-1}}(\pi ,\St_{n-1}) \neq 0$ is equal to the number of ways of partitioning $\Delta^0$ as a disjoint union of segments with at most one segment having length 2.  We write the disjoint union, 
\[ \Delta^0 = \coprod_{i=1}^k \Delta_k \] in such a way that $\Delta_i$ does not precede $\Delta_j$ for $i<j$ in order to ensure compatibility with the Zelevinsky classification.

Consider the partition $(1, 1, 1, \ldots, 1)$ of $n$ where $1$ occurs '$n$' times. This clearly corresponds to the partition $\Delta^0 = \coprod_{i=1}^n \{\nu^{(\frac{n-i}{2})}\}$. This partition in turn corresponds to the generic representation $\pi_0$ in our list in Theorem \ref{list}.

On the other hand, for $i\in \{ 1, 2, \dots, (n-1)\}$ consider the partition of $n$ where $2$ occurs at only the $i$-th position and $1$ occurs at the remaining positions. This partition clearly corresponds to the representation $\pi_i$ in our list in Theorem \ref{list}.

We have shown that there are at most $n$ irreducible subquotients $\pi_0,\pi_1,\ldots,$ $\pi_{n-1}$ that can have $\St_{n-1}$ as a quotient upon restriction. We now invoke part (B) of Theorem \ref{multiplicity for steinberg}. By the multiplicity one property for irreducible representations each irreducible subquotient of $\Pi$ can make a contribution of at most one to the dimension of the space $\Hom_{\G_{n-1}}(\pi,\St_{n-1})$. Since the dimension of $\Hom_{\G_{n-1}}(\pi,\St_{n-1})$ is equal to $n$ there must be at least $n$ such irreducible subquotients. We conclude that there are exactly $n$ irreducible subquotients of $\Pi$ that have $\St_{n-1}$ as a quotient upon restriction. These are exactly the irreducible representations $\pi_0,\pi_1,\ldots,\pi_{n-1}$ listed in Theorem \ref{list}.

\subsection{Proof of Theorem \ref{list of non generic}}

We reformulate Theorem \ref{list of non generic}
as follows.
\begin{theorem}
 Let $\Delta=[\rho,\nu^{s}\rho]$ be a segment such that $l_a(\Delta)=n-1$ and $\rho$ be some cuspidal representation of $\G_r$. Let $\tau$ be a non-generic irreducible representation of $\G_n$. Let $\pi_1, \pi_2, \ldots ,\pi_{n-1}$ be irreducible representations of $\G_n$ as defined in Theorem \ref{list}.

 \begin{itemize}
     \item[(A)] If $r\geq 2$, then $\Hom_{\G_{n-1}}(\tau,Q(\Delta))=0$.
     \item[(B)] If $r=1$, $\rho=\nu^{-\frac{(n-2)}{2}}$ and $s=n-2$, (that is, $Q(\Delta)=\St_{n-1}$) then
     \[\Hom_{\G_{n-1}}(\tau,Q(\Delta))\neq 0 \iff  \tau = \pi_{i_0}  \text{ for some }  i_0 \in \{ 1,2,\ldots,(n-1) \}.\]
 \end{itemize}
 
\end{theorem}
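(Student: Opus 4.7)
The plan is to classify all non-generic irreducible $\tau$ of $\G_n$ with $\Hom_{\G_{n-1}}(\tau,Q(\Delta))\neq 0$ by a two-sided Bernstein--Zelevinsky derivative analysis. The reverse direction of Part (B) is immediate from Theorem \ref{list}, so what remains is the forward implication for (B) together with all of (A). Write $\tau = Z(\Delta_1,\ldots,\Delta_k)$ in the Zelevinsky classification; non-genericity means at least one segment $\Delta_j$ has relative length $\geq 2$.

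First, I would establish that exactly one $\Delta_j$ has relative length $\geq 2$, the rest being singletons (cuspidal representations). This is a variant of Proposition \ref{proposition for list} tailored to the irreducible submodule $\tau$ rather than the full standard module. Since derivatives are exact, $\tau^{(i)}$ sits inside the set of subquotients of $Z(\Delta_1)\times\cdots\times Z(\Delta_k)^{(i)}$, so the same product-rule / cuspidal-support / segment-linkage argument used in the proof of Proposition \ref{proposition for list} still precludes the existence of a subquotient of $\tau^{(i)}$ isomorphic to $Q(\Delta^{(j)})$ whenever two segments of length $\geq 2$ occur.

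Next, I would combine both parts of Lemma \ref{left right bz filtration}. Since the left and right derivatives of $Q(\Delta)$ vanish except at multiples of $r$, the nonzero contributions in each upper bound occur only at indices of the form $jr+1$. For these indices we need nonzero maps
\[ \nu^{1/2}\tau^{(jr+1)}\twoheadrightarrow Q([\rho,\nu^{s-j}\rho]) \quad\text{and}\quad \nu^{-1/2}{}^{(j'r+1)}\tau \to Q([\nu^{j'}\rho,\nu^s\rho]) \]
for some $j,j'\in\{0,\ldots,s\}$. By the product rule and Lemma \ref{derivative of ZDelta}, a singleton cuspidal $\rho_l\in\Alg(\G_{m_l})$ contributes to $\tau^{(i)}$ only at $i_l\in\{0,m_l\}$, while the unique long segment $\Delta_{j_0}$ of cuspidal type $\rho'\in\Alg(\G_{r'})$ contributes only at $i_l\in\{0,r'\}$. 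Hence a decomposition $jr+1 = \sum_l i_l$ forces exactly one summand to contribute $1$, which must come from a singleton that is a character (cuspidal of $\G_1$) lying on the cuspidal line of $\rho$ dictated by the Hom-matching.

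For Part (A) with $r\geq 2$, the cuspidal line of $\rho$ contains only representations of $\G_r$, so no character is available for the $+1$ offset, yielding $\Hom_{\G_{n-1}}(\tau,Q(\Delta))=0$. For Part (B) with $r=1$ and $Q(\Delta)=\St_{n-1}$, the cuspidal line of $\rho=\nu^{-(n-2)/2}$ consists entirely of characters, so the $+1$ offset is feasible on both sides; matching the cuspidal support of $\tau$ against that forced by the two-sided Hom conditions identifies $\tau$ as an irreducible subquotient of $\Pi=\nu^{-(n-1)/2}\times\cdots\times\nu^{(n-1)/2}$, and Theorem \ref{list} then pins $\tau$ down to some $\pi_{i_0}$. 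The main obstacle will be the ``exactly one long segment'' reduction: adapting the argument of Proposition \ref{proposition for list} from the principal series to the irreducible submodule $\tau$ requires rerunning the derivative analysis while tracking subquotient relations, and one has to be careful that the cuspidal-support contradictions remain sharp at the level of $\tau$ itself.
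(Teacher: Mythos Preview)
Your plan to combine both halves of Lemma \ref{left right bz filtration} is the right framework and is what the paper does, but there is a real gap in the execution. You apply the product rule directly to $\tau^{(i)}$ and ${}^{(i)}\tau$, but $\tau$ is an irreducible \emph{submodule} of $Z(\Delta_1)\times\cdots\times Z(\Delta_k)$, so exactness only tells you that $\tau^{(i)}$ is a subquotient of the derivative of the product; it does not let you read off the pieces of a product-rule filtration on $\tau^{(i)}$ itself, which is what your ``$jr+1=\sum_l i_l$'' decomposition presupposes. The paper fixes this with a dual trick you omit: from $\tau\hookrightarrow\prod_l Z(\Delta_l)$ one gets $\prod_l Z(\Delta_l^\vee)\twoheadrightarrow\tau^\vee$, and Gelfand--Kazhdan turns $\Hom_{\G_{n-1}}(\tau,Q(\Delta))\neq 0$ into $\Hom_{\G_{n-1}}\bigl(\prod_l Z(\Delta_l^\vee),Q(\Delta^\vee)\bigr)\neq 0$, so that Lemma \ref{left right bz filtration} and the product rule now apply cleanly to an actual product.

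Your ``exactly one long segment'' reduction is both unnecessary and harder than you suggest: Proposition \ref{proposition for list} uses disjointness of the $\Delta_i$ in an essential way, and the Zelevinsky data of a general $\tau$ need not be disjoint. The paper never proves this. It picks \emph{one} segment $\Delta_p$ of relative length $>1$, shows (via genericity of the target) that $l_r(\Delta_p)=2$, say $\Delta_p=[\rho_1,\nu\rho_1]$, and then uses the right filtration to place $\nu^{-1/2}\rho_1^\vee$ at a specific point $\nu^m\rho^\vee$ of $(\Delta^\vee)^{(i-1)}$ and the left filtration to place it in ${}^{(j-1)}(\Delta^\vee)$. Because both truncations are still segments, this forces $\nu^{-1/2}\rho,\nu^{1/2}\rho,\ldots,\nu^{s+1/2}\rho$ all to lie in $\bigcup_q\Delta_q$, whence the dimension count $n=\sum_q l_a(\Delta_q)\geq (s+2)r=(n-1)+r$. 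This is a contradiction for $r\geq 2$, and for $r=1$ it pins down $\csupp(\tau)$ so that Theorem \ref{list} finishes. Your ``$+1$ offset'' claim that $jr+1=\sum_l i_l$ with $i_l\in\{0,m_l\}$ forces some $i_l=1$ is simply false without knowing in advance that the $m_l$ are multiples of $r$; the paper's dimension count is what replaces this.
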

\begin{proof}
 Suppose that $\Hom_{\G_{n-1}}(\tau,Q(\Delta))\neq 0 $. Let $\tau = Z(\Delta_1,\Delta_2,\ldots,\Delta_k)$ in terms of the Zelevinsky classification. Since, $\tau$ is non-generic, we conclude that at least one of the segments, say $\Delta_p$, has relative length greater than 1. Since $\pi \hookrightarrow Z(\Delta_1)\times Z(\Delta_2) \times \ldots \times Z(\Delta_k)$ therefore by taking duals we obtain that, $Z(\Delta_1^{\vee})\times Z(\Delta_2^{\vee}) \times \ldots \times Z(\Delta_k^{\vee}) \twoheadrightarrow \pi^{\vee}$. We conclude that,
\[ \Hom_{\G_{n-1}}(Z(\Delta_1^{\vee})\times Z(\Delta_2^{\vee}) \times \ldots \times Z(\Delta_k^{\vee}),Q(\Delta^{\vee})) \neq 0. \] By part (A) of Lemma \ref{left right bz filtration} we conclude that there exists an integer $i\geq 1$ such that $\Hom_{G_{n-i}}(\nu^{1/2}.(Z(\Delta_1^{\vee})\times Z(\Delta_2^{\vee}) \times \ldots \times Z(\Delta_k^{\vee}))^{(i)},^{(i-1)}Q(\Delta^{\vee}))\neq 0$. Therefore, 
\begin{equation} \label{eq:one}
 \Hom_{G_{n-i}}(\nu^{1/2}.(Z(\Delta_1^{\vee})^{\prime}\times\ldots  \times Z(\Delta_k^{\vee})^{\prime}),Q((\Delta^{\vee})^{(i-1)}))\neq 0   
\end{equation} 
where, $Z(\Delta_j^{\vee})^{\prime} = Z(\Delta_j^{\vee})$ or $Z(\Delta_j^{\vee})^{\prime} = Z((\Delta_j^{\vee})^{(-)})$. Since, $Q((\Delta^{\vee})^{(i-1)})$ is generic we must have that $Z(\Delta_p^{\vee})^{\prime}$ is generic. Therefore, $\Delta_p$ has relative length equal to 2 and $Z(\Delta_p^{\vee})^{\prime}=Z((\Delta_p^{\vee})^{(-)})$. Suppose $\Delta_p=[\rho_1,\nu\rho_1]$, for some cuspidal representation $\rho_1$. Since    $Z(\Delta_p^{\vee})^{\prime}=Z((\Delta_p^{\vee})^{(-)})=\nu^{-1}\rho_1^{\vee}$, by comparing cuspidal supports we obtain that $\nu^{-1/2}\rho_1^{\vee}\in (\Delta^{\vee})^{(i-1)}$. Suppose $\nu^{-1/2}\rho_1^{\vee}=\nu^{m}\rho^{\vee}$ for some integer $m$. Since $\nu^{-s}\rho^{\vee}, \nu^{-s+1}\rho^{\vee},\ldots, \nu^{m}\rho^{\vee}\in (\Delta^{\vee})^{(i-1)}$ by Equation \ref{eq:one} we conclude
that all the $\nu^{-s-\frac{1}{2}}\rho^{\vee}, \nu^{-s+\frac{1}{2}}\rho^{\vee},\ldots,$ $ \nu^{m-\frac{1}{2}}\rho^{\vee}$ belong to
$\cup_{q=1}^k \Delta_q^{\vee}$. By taking duals we conclude that all the $\nu^{-m+\frac{1}{2}}\rho,$ $ \nu^{-m+\frac{3}{2}}\rho,\ldots, \nu^{s+\frac{1}{2}}\rho$ belong to
$\cup_{q=1}^k \Delta_q$.

On the other hand, by Lemma \ref{left right bz filtration} part (B) we have that,  
\begin{equation} \label{eq:two}
 \Hom_{G_{n-j}}(\nu^{-1/2}.^{(j)}(Z(\Delta_1^{\vee})\times \ldots \times Z(\Delta_k^{\vee})),Q( ^{(j-1)}(\Delta^{\vee})))\neq 0,
\end{equation}  
for some integer $j\geq 1$. Just as above, by comparing cuspidal supports we obtain that $\nu^{-1/2}Z( ^{(-)}(\Delta_p^{\vee}))=\nu^{-1/2}\rho_1^{\vee}=\nu^{m}\rho^{\vee} \in $ $ ^{(j-1)}(\Delta^{\vee})$. Using the fact that
$\nu^{m}\rho^{\vee}, \nu^{m+1}\rho^{\vee},\ldots, \rho^{\vee} \in $ $^{(j-1)}(\Delta^{\vee})$ by Equation \ref{eq:two} we conclude that $\nu^{m+\frac{1}{2}}\rho^{\vee}, \nu^{m+\frac{3}{2}}\rho^{\vee},\ldots, \nu^{\frac{1}{2}}\rho^{\vee} \in \cup_{q=1}^k \Delta_q^{\vee}$. By taking duals we conclude that $\nu^{-\frac{1}{2}}\rho,\ldots, \nu^{-m-\frac{3}{2}}\rho, \nu^{-m-\frac{1}{2}}\rho \in \cup_{q=1}^k \Delta_q$.

From the above arguments we obtain that, $\nu^{-\frac{1}{2}}\rho,\nu^{\frac{1}{2}}\rho,\ldots,\nu^{s+\frac{1}{2}}\rho\in \cup_{q=1}^k \Delta_q$. This implies that,\[ n=\sum_{q=1}^k l_a(\Delta_q)\geq \sum_{u=0}^{s+1} n(\nu^{-\frac{1}{2}+u}\rho) = (s+2)r = (s+1)r + r = (n-1) +r. \]

If $r\geq 2$, we obtain a contradiction. This proves part (A).

We now prove part (B). If $r=1$, then the above inequality becomes an equality. When $\rho=\nu^{-\frac{(n-2)}{2}}$ and $s=n-2$ this implies that $\csupp(\tau)= \{ \nu^{-(\frac{n-1}{2})}, 
\nu^{-(\frac{n-3}{2})}, \ldots,\nu^{(\frac{n-1}{2})} \}$. This means that $\tau$ is a subquotient of the principal series $\nu^{-(\frac{n-1}{2})} \times 
\nu^{-(\frac{n-3}{2})} \times \ldots \times \nu^{(\frac{n-1}{2})}$. By Theorem \ref{list} we are done.

\end{proof}

\section{Proof of Theorem \ref{multiplicity for ZDelta}} \label{section eight}

Given $k\geq 2$ and $n = km +1$. Let $\Delta$ be the segment $\Delta = [\rho,\nu^{m-1}\rho]$ where, $\rho$ is a cuspidal representation of $\G_k$ and $m\geq 1$ is an integer. Any irreducible representation of $\G_n$ can be obtained as a quotient of some product of cuspidal representations. Therefore it is enough to prove the upper bound assuming that $\Pi$ is a principal series representation of $\G_n$ parabolically induced from a cuspidal representation of a Levi subgroup. 

If $m=1$ then $Z(\Delta)=\rho$ is simply a cuspidal representation of $\G_k$ and we invoke Theorem \ref{multiplicity for generalized steinberg}. So from now on we assume that $m\geq 2$.

\textbf{Case 1} - Suppose that $\Pi$ is of the form $\Pi = \nu^{1/2}(\nu^{m-1}\rho \times \nu^{m-2}\rho \times \ldots \times \rho)\times \chi$ for some character $\chi$. By part (A) of Lemma \ref{left right bz filtration} and Lemma \ref{derivative of ZDelta},
\begin{align*}  
\dim \Hom_{\G_{n-1}}(\Pi,Z(\Delta)) & \leq \dim \Hom_{\G_{n-1}}(\nu^{1/2}\Pi^{(1)},Z(\Delta)) \\ & + \dim \Hom_{\G_{n-1}}(\nu^{1/2}\Pi^{(k+1)},Z(^{-}\Delta)).
\end{align*}
The first term on the right hand side is clearly $0$. We use the product rule for derivatives to obtain the composition factors of $\nu^{1/2}\Pi^{(k+1)}$. The only composition factor of $\nu^{1/2}\Pi^{(k+1)}$
that can contribute to $\Hom_{\G_{n-1}}(\nu^{1/2}\Pi^{(k+1)},Z(^{-}\Delta))$ is $\nu^{m-1}\rho \times \nu^{m-2}\rho \times \ldots\times \nu \rho$. Therefore,
\[ \dim \Hom_{\G_{n-1}}(\nu^{1/2}\Pi^{(k+1)},Z(^{-}\Delta)) \leq 1 \implies \dim \Hom_{\G_{n-1}}(\Pi,Z(\Delta)) \leq 1. \]

\textbf{Case 2} - Suppose that $\Pi$ is not of the form $\nu^{1/2}(\nu^{m-1}\rho \times \nu^{m-2}\rho \times \ldots \times \rho)\times \chi$. Clearly $\Hom_{\G_{n-1}}(\nu^{-1/2}.^{(1)}\Pi,Z(\Delta)) = 0$ and therefore by part (B) of Lemma \ref{left right bz filtration} and Lemma \ref{derivative of ZDelta} we conclude that, 
\[ \dim \Hom_{\G_{n-1}}(\Pi,Z(\Delta)) \leq \dim \Hom_{\G_{n-1}}(\nu^{-1/2}.^{(k+1)}\Pi,Z(\Delta^{-})). \]
If $\Hom_{\G_{n-1}}(\Pi,Z(\Delta)) \neq 0$ then $\Hom_{\G_{n-1}}(\nu^{-1/2}.^{(k+1)}\Pi,Z(\Delta^{-})) \neq 0$. Therefore each of $\{ \rho, \nu \rho, \nu^2 \rho,\ldots, \nu^{m-2} \rho \}$ lies in the cuspidal support of $\nu^{-1/2}\Pi$. If $\dim \Hom_{\G_{n-1}}(\Pi,Z(\Delta)) \geq 2$, then $\dim \Hom_{\G_{n-1}}(\nu^{-1/2}.^{(k+1)}\Pi,Z(\Delta^{-})) \geq 2$. This means that the composition factor $\nu^{m-2}\rho\times \nu^{m-3} \rho\times \ldots\times \nu \rho \times  \rho$ must occur at least twice in the semi-simplification of $\nu^{-1/2}.^{(k+1)}\Pi$. This can only happen for the principal series $\Pi_1,\Pi_2,\ldots,\Pi_{m-1}$ defined as,
\[ \Pi_i = \tau_{i1} \times \tau_{i2} \times \ldots \times \tau_{im} \times \chi_i \] where,
\[ \tau_{ij} = \begin{cases}
     \nu^{1/2}.\nu^{m-j-1}\rho, & \text{ for } 1\leq j < i \\
     \nu^{1/2}.\nu^{m-i-1}\rho, & \text{ for }  j=i, i+1 \\
     \nu^{1/2}.\nu^{m-j-1}\rho, & \text{ for } i+2 \leq j \leq m
\end{cases}
\]
and $\chi_i$ ($i=1, 2, \ldots, (m-1)$) are some characters.

For instance, for $\Pi_1 = \nu^{1/2}(\nu^{m-2}\rho\times \nu^{m-2}\rho\times \nu^{m-3} \rho\times\ldots\times \nu^2 \rho \times \rho)\times \chi_1$, by the product rule for derivatives it is clear that $\nu^{m-2}\rho\times \nu^{m-3} \rho\times \ldots\times \nu \rho \times  \rho$ occurs exactly twice in the semi-simplification of $\nu^{-1/2}.^{(k+1)}\Pi_1$. 

Therefore, we need to now only deal with the principal series $\Pi_1,\Pi_2,\ldots,$ $\Pi_{m-1}$ defined above. Since $Z(\Delta)\hookrightarrow \rho \times \nu^2 \rho \times \ldots \times \nu^{m-1}\rho$,
\[ \dim \Hom_{\G_{n-1}}(\Pi_i,Z(\Delta)) \leq \dim \Hom_{\G_{n-1}}(\Pi_i,\rho \times \nu^2 \rho \times \ldots \times \nu^{m-1}\rho)\] for $i=1, 2, \ldots, (m-1)$. Note that each $\Pi_i$ is a standard representation and $\rho \times \nu^2 \rho \times \ldots \times \nu^{m-1}\rho$ is the dual of a standard representation. Therefore by the multiplicity one theorem for standard representations (see \cite[Theorem 1.1]{ch23}) the dimension of the $\Hom$ space on the right hand side of the above inequality is equal to $1$.

\section{Geometric Origin of Hom Spaces: An Example} \label{section nine}

In section 9 of the work \cite{pr18}, it is suggested by Dipendra Prasad that homomorphisms and extensions between representations realized on functions on geometric spaces or $\ell$-sheaves often have a geometric origin via the Bernstein-Zelevinsky exact sequence, cf. Proposition 1.8 of \cite{bz76}. Let $\pi_1,\ldots,\pi_{n-1}$ be the irreducible representations of $\GL_n(F)$ as defined in Theorem \ref{list}. We have seen that $\Hom_{\G_{n-1}}(\pi_i,\St_{n-1})= \mathbb{C}$ for all $i= 1, 2, \ldots, (n-1)$. In this section, we explain how these homomorphisms arise from maps between geometric spaces. 

\subsection{Geometric Origin of Certain Hom Spaces} \label{section nine point one} Given a $p$-adic manifold $X$, let $\mathbb{S}[X]$ denote the space of locally constant compactly supported functions on X. For our purposes, a $p$-adic manifold is an open subset of a topological space of the form $G/H$ where, $G$ is a $p$-adic group and $H$ is a closed subgroup of $G$. Let $V$ be a vector space over the non-archimedean field $F$ with basis $\{ e_1,e_2,\ldots, e_n$\}. Let $W$ be the subspace of $V$ generated by the first $(n-1)$ elements in this basis. Then $\GL(W)=\GL(n-1)$ is contained inside $\GL(V)=\GL(n)$, acting trivially on $e_n$. 

For $r=1, 2, \ldots, (n-1)$, let $Q_r$ be the parabolic subgroup in $\GL(V)$ defined as the stabilizer of the flag, \[ V_{1,r} \subset V_{2,r} \subset \ldots \subset V_{r,r} \subset \ldots \subset V_{n-1,r} \]
where $V_{i,r} = 
    \begin{cases}
    \langle e_1,\ldots, e_{i} \rangle, & \text{ for } i=1,2,\ldots,r-1, \\
    \langle e_1,\ldots, e_{r},e_n \rangle, & \text{ for } i=r, \\
    \langle e_1,\ldots, e_r, e_n, e_{r+1}, ,\ldots, e_{i} \rangle, & \text{ for } i=(r+1),\ldots, (n-1).
    \end{cases}$

Clearly, $Q_r$ is a parabolic subgroup in $\GL(V)$ corresponding to the partition of $n$ given by $(1,1,\ldots,2,\ldots,1)$  where $2$ occurs at the $r$-th position. Let $B_n$ and $B_{n-1}$ denote the standard Borel subgroups in $\GL(n)$ and $\GL(n-1)$ respectively.

It is clear that $Q_r \cap \GL(n-1) = B_{n-1}$ and therefore we have the injection, \[ \GL(n-1)/B_{n-1} \hookrightarrow \GL(n)/Q_r. \] We also have the following natural injection, \[ \GL(n-1)/B_{n-1} \hookrightarrow \GL(n)/B_n. \] We now prove the following lemma.

\begin{lemma} \label{lemma on parabolic}
  Let $B_{n-1}$ be a fixed Borel subgroup of $\GL(n-1)=\GL(W)$, and let $P$ be a parabolic subgroup of $\GL(n)=\GL(V)$ where, $V=W\oplus e_n$. Then the natural inclusion of $\GL(n-1)$ inside $\GL(n)$ gives rise to a closed embedding, \[ \GL(n-1)/B_{n-1} \hookrightarrow \GL(n)/P, \]
  i.e., $P\cap \GL(n-1) = B_{n-1}$,  if and only if, in the notation introduced above, either $P=B_n$ or $P=Q_r$ for some $r\in \{ 1, 2, \ldots, (n-1) \}.$ In particular, if $Q^{\prime}$ is a parabolic subgroup in $\GL(n)$ strictly containing $Q_r$ for some $r$, then $Q^{\prime}\cap \GL(n-1)$ is a parabolic subgroup of $\GL(n-1)$ strictly containing $B_{n-1}$.
\end{lemma}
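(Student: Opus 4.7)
The plan is to translate the intersection condition into a condition on the flag stabilized by $P$ and then carry out a short combinatorial analysis. The key observation is that any $g \in \GL(W)$ acts trivially on $\langle e_n \rangle$ and preserves $W$; consequently, for any subspace $V' \subseteq V$ one has $gV' = V'$ if and only if $g(V' \cap W) = V' \cap W$. Therefore, if $P$ is the stabilizer of a flag $V_1 \subsetneq V_2 \subsetneq \cdots \subsetneq V_k$ of proper nontrivial subspaces of $V$, then $P \cap \GL(W)$ equals the parabolic of $\GL(W)$ stabilizing the chain $\{V_i \cap W\}_{i=1}^k$. The condition $P \cap \GL(W) = B_{n-1}$ is then equivalent to requiring that the distinct proper nontrivial members of $\{V_i \cap W\}$ are precisely the subspaces $\langle e_1 \rangle \subsetneq \langle e_1, e_2 \rangle \subsetneq \cdots \subsetneq \langle e_1, \ldots, e_{n-2} \rangle$ of $W$.

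For the ``if'' direction, the verification is a direct inspection. The flag of $B_n$ intersected with $W$ yields $\langle e_1 \rangle \subset \cdots \subset \langle e_1, \ldots, e_{n-2} \rangle \subset W$, and the proper nontrivial terms form the standard flag of $W$. For $Q_r$, intersecting each $V_{i,r}$ with $W$ strips off $e_n$ whenever it is present, and one obtains exactly $\langle e_1 \rangle \subset \cdots \subset \langle e_1, \ldots, e_{n-2} \rangle$. For the converse, note that each $V_i$ is determined by $V_i \cap W$ together with the binary datum ``is $e_n \in V_i$?'': if $e_n \notin V_i$ then $V_i = V_i \cap W$, and if $e_n \in V_i$ then $V_i = (V_i \cap W) \oplus \langle e_n \rangle$. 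The indices with $e_n \in V_i$ form a final segment $\{i_0, i_0+1, \ldots, k\}$ of $\{1, \ldots, k\}$, where $i_0$ may be absent. Combining this parametrization with the constraint that the distinct $V_i \cap W$ form the standard flag of $W$, a case split on $i_0$ forces the flag: if no $V_i$ contains $e_n$, then the resulting flag in $V$ is a sub-flag of the standard complete flag of $V$, which forces $P = B_n$ (when $W$ occurs as a $V_i$) or $P = Q_{n-1}$ (when it does not); if $i_0$ exists, then the dimension ``jump'' of $V_i \cap W$ at position $i_0$ picks out a single missing dimension $r \in \{1, \ldots, n-1\}$, and matching subspaces identifies $P$ with $Q_r$.

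The main technical step, and the principal difficulty, is the analysis in the case $i_0$ exists: one must check that the combinatorics of the dimensions (together with the forcing of each $V_i \cap W$ to lie on the standard flag of $W$) admits a unique admissible $r$ and that the flag indeed coincides with the defining flag of $Q_r$. The ``in particular'' assertion at the end of the lemma then follows formally: any parabolic $Q'$ strictly containing $Q_r$ has its defining flag strictly refined by that of $Q_r$; a brief comparison of the flag dimensions shows $Q'$ cannot equal $B_n$ nor any $Q_s$, so by the main equivalence $Q' \cap \GL(n-1) \neq B_{n-1}$. Since $B_{n-1} \subseteq Q_r \subseteq Q'$ forces $B_{n-1} \subseteq Q' \cap \GL(n-1)$, this containment must be strict, yielding the desired statement.
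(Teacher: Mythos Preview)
Your overall approach is the same as the paper's: realize $P$ as the stabilizer of a flag, intersect with $W$, and analyze the combinatorics. However, your ``key observation'' is false as stated. Take $V' = \langle e_1 + e_n \rangle$ and $g = \mathrm{diag}(c, 1, \ldots, 1) \in \GL(W)$ with $c \neq 1$: then $V' \cap W = 0$, so trivially $g(V' \cap W) = V' \cap W$, yet $g(e_1 + e_n) = c\,e_1 + e_n \notin V'$, so $gV' \neq V'$. Consequently, for a general flag, $P \cap \GL(W)$ need not equal the stabilizer of $\{V_i \cap W\}$ and need not even be a parabolic (in this one-step example it is the subgroup of $\GL(W)$ fixing $e_1$). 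Your identification therefore fails, and with it both the deduction that $\{V_i \cap W\}$ must be the full standard flag of $W$ and the claim that each $V_i$ is determined by $V_i \cap W$ together with the bit ``$e_n \in V_i$''.

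The fix is to first invoke the hypothesis $B_{n-1} \subseteq P$. Since the diagonal torus $T \subset B_{n-1}$ stabilizes each $V_i$, and a $T$-stable subspace of $V$ must be spanned by a subset of $\{e_1, \ldots, e_n\}$, each $V_i$ is a coordinate subspace; the unipotent radical of $B_{n-1}$ then forces $V_i = \langle e_1, \ldots, e_{m_i} \rangle$ or $\langle e_1, \ldots, e_{m_i}, e_n \rangle$ for some $m_i$. For subspaces of this special form your biconditional \emph{does} hold, so $P \cap \GL(W)$ really is the stabilizer of $\{V_i \cap W\}$, and the rest of your argument (the parametrization of $V_i$ by $V_i \cap W$ together with whether $e_n \in V_i$, and the case analysis on $i_0$) goes through.
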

\begin{proof}
The proof depends on realizing any parabolic subgroup $P$ in $\GL(n) = \GL(V)$ as the stabilizer of a flag, \[ 0 \neq V_{i_1} \subsetneq V_{i_2} \subsetneq \ldots \subset V_{i_s} \subsetneq V. \] Therefore if $\GL(n-1)\cap P = B_{n-1}$, then $\{V_{i_j}\cap W \}$ is the standard flag, \[ \langle e_1 \rangle \subset \langle e_1, e_{2} \rangle \subset \ldots \subset \langle e_1,e_2\ldots, e_{n-1} \rangle \] in $W$. Then it can be seen that the following two possibilities can occur. \begin{enumerate}
    \item We have that $s=n-1$ and the dimension of each vector space in the above flag successively increases by $1$. In this case, $P$ is the unique Borel subgroup of $\GL(n)$ containing $B_{n-1}$, which is $B_n$.
    \item We have that $s=n-2$ and the dimension of each vector space in the above flag successively increases by $1$ except exactly at one point where it increases by $2$. In this case, one can check that $P$ is one of the parabolic subgroups $Q_r$ defined above.
\end{enumerate} Note that $\GL(n-1)/B_{n-1}$ being a projective variety, is compact, and hence its image under the above continuous, injective map inside $\GL(n)/Q_r$ and $\GL(n)/B_n$ is closed. 
\end{proof}

The following lemma generalizes the previous one for all classical groups whose proof we omit. 
\begin{lemma} \label{general lemma on parabolic}
Let $(G,H)$ be any of the following pairs: $(\GL(n),\GL(n-1))$, $(\SO(n), \SO(n-1))$ or $(\U(n), \U(n-1))$ with both $G$ and $H$ quasi-split in the latter two cases, and assume furthermore that the rank of $G$ is greater than the rank of $H$. Let $B_H$ be a fixed Borel subgroups of $H$ and $Q$ a parabolic subgroup of $G$ such that $Q\cap H \supset B_H$.
Then $Q$ has a conjugate $Q^{\prime}$ such that $Q^{\prime}\cap H$ is equal to $B_H$ if and only if the semi-simple rank of any Levi subgroup of $Q$ is less than or equal to 1. Therefore if there exists a closed embedding, \[ f: H/B_H \hookrightarrow G/Q, \] then any Levi subgroup of $Q$ has semi-simple rank less than or equal to 1. Conversely, if $Q$ has semi-simple rank less than or equal to 1 then it has a conjugate $Q^{\prime}$ such that there exists a closed embedding, \[ f: H/B_H \hookrightarrow G/Q^{\prime}. \] 
\end{lemma}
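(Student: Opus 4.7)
My plan is to generalize the combinatorial argument of Lemma \ref{lemma on parabolic}. For each pair $(G,H)$ in the list, parabolic subgroups of $G$ up to conjugacy are the stabilizers of flags of subspaces of $V$, where for $G=\SO(V)$ or $\U(V)$ the flags are required to consist of isotropic subspaces (the full self-dual flag is then determined by the isotropic part via the form). Likewise, $B_H$ is the stabilizer of a fixed maximal (isotropic) flag of the natural subspace $W \subsetneq V$ on which $H$ acts.

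First I would write $Q$ as the stabilizer of a flag $\mathcal{F}: 0 \subsetneq V_{i_1} \subsetneq \ldots \subsetneq V_{i_s} \subsetneq V$. Then $Q\cap H$ is the stabilizer in $H$ of the induced sequence of intersections $V_{i_j}\cap W$. After conjugating $Q$ by an element of $G$ so that $\mathcal{F}$ aligns with the standard choice, the hypothesis $Q\cap H\supset B_H$ means these intersections refine into a sub-flag of the maximal (isotropic) flag of $W$. Equality $Q\cap H = B_H$ holds precisely when this induced flag equals the full maximal flag of $W$. A dimension count on successive jumps $\dim V_{i_{j+1}}-\dim V_{i_j}$ shows this happens if and only if at most one jump has size $2$, corresponding to the unique slot where the extra direction from $V\setminus W$ enters, with all other jumps of size $1$.

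Next I would translate this flag condition back to Levi data. The Levi of $Q$ decomposes as a product of $\GL$-factors read off from the jump sizes, together with a smaller classical group factor acting on the non-degenerate quotient in the $\SO$ and $\U$ cases. Its semi-simple rank equals $\sum (n_i-1)$ over the $\GL$-factor ranks $n_i$, plus the semi-simple rank of any classical factor. The condition \emph{at most one jump of size $2$, all others of size $1$, with no residual classical factor of positive rank} is therefore exactly \emph{semi-simple rank of any Levi of $Q$ is $\leq 1$}, which gives the stated equivalence. For the converse direction, given such a $Q$ I would explicitly insert the distinguished direction from $V\setminus W$ into the standard maximal (isotropic) flag of $W$ at the slot corresponding to the jump of size $2$, and verify that the resulting stabilizer $Q'$ is conjugate to $Q$ and satisfies $Q'\cap H = B_H$.

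Finally, the closed embedding part follows formally: an $H$-equivariant embedding $f: H/B_H \hookrightarrow G/Q$ sends $eB_H$ to some $gQ$, and by equivariance plus injectivity forces $H\cap gQg^{-1}=B_H$; conversely, when $H\cap Q' = B_H$, the map $hB_H \mapsto hQ'$ is well defined and injective with projective source, hence a closed embedding. The main obstacle I anticipate is the bookkeeping in the $\SO$ and $\U$ cases at the middle of the flag, where the orthogonal or hermitian pairing forces the flag to be self-dual, so that inserting a single extra direction can a priori change two jumps simultaneously; showing that one can always arrange the combinatorics so that the net effect is exactly one extra jump of size $2$ (and no hidden positive-rank classical Levi factor) is the delicate point.
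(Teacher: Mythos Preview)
The paper does not prove this lemma; immediately after its statement it says only that the lemma ``generalizes the previous one for all classical groups whose proof we omit.'' So there is no argument in the paper to compare against. Your outline is the natural extension of the proof of Lemma \ref{lemma on parabolic}: realize $Q$ as a flag stabilizer, intersect with $W$, and read off the Levi type from the jump sizes. The formal deduction of the closed-embedding statement at the end is also correct.

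There is, however, a genuine gap in your third paragraph. You claim that ``at most one jump of size $2$, all others of size $1$, with no residual classical factor of positive rank'' is \emph{exactly} the condition that the Levi of $Q$ has semi-simple rank $\leq 1$. The forward implication is fine, but the converse fails: one can have every $\GL$-block of size $1$ while the residual classical factor itself contributes semi-simple rank $1$. Under the hypothesis $\mathrm{rank}(G)>\mathrm{rank}(H)$ one is in the case $G=\SO(2m)$ or $G=\U(2m)$. For $\SO(2m)$ the residual factor is $\SO(2m_0)$, whose semi-simple rank is $0$ (for $m_0\leq 1$) or $\geq 2$ (for $m_0\geq 2$), so the issue is vacuous there and you should simply record this. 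But for $\U(2m)$ the Levi $\GL_1^{\,m-1}\times \U(2)$ has semi-simple rank $1$; the corresponding parabolic stabilizes a complete isotropic flag $L_1\subset\cdots\subset L_{m-1}$ in $V$ with no $\GL_2$ block at all. Choosing this flag to lie in $W$ makes it a maximal isotropic flag of $W$ (since $H=\U(2m-1)$ has rank $m-1$), and one checks directly that $Q'\cap H=B_H$. So this parabolic must be \emph{included} in your list, not excluded by the clause ``no residual classical factor of positive rank.'' The ``delicate point'' you flag at the end, about self-duality possibly forcing two simultaneous jump changes, is a separate bookkeeping issue; the case you are actually missing is the one where the single unit of semi-simple rank sits entirely in the anisotropic middle rather than in a $\GL_2$ block.
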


Let us now see how using Lemma \ref{lemma on parabolic} a homomorphism from $\pi_r$ to $\St_{n-1}$ can arise geometrically. For $1\leq r\leq (n-1)$, the closed embedding \[ \GL(n-1)/B_{n-1} \hookrightarrow \GL(n)/Q_r\] gives rise to the surjective map (see \cite[Proposition 1.8]{bz76}), \[ \mathbb{S}[\GL(n)/Q_r] \twoheadrightarrow \mathbb{S}[\GL(n-1)/B_{n-1}]. \] 

We have the natural map $\mathbb{S}[\GL(n-1)/B_{n-1}] \twoheadrightarrow \St_{n-1}$ which gives us the surjective map, \[ \mathbb{S}[\GL(n)/Q_r] \twoheadrightarrow \St_{n-1}. \]

Let $P_r$ denote the standard parabolic subgroup inside $\GL_n(F)$ corresponding to the partition of $n$ given by $(1,1,\ldots,2,\ldots,1)$ where $2$ occurs at the $r$-th position. Since $P_r$ and $Q_r$ are conjugate to each other it is clear that $\mathbb{S}[\GL(n)/Q_r]\cong \mathbb{S}[\GL(n)/P_r]$ considered as representations of $\GL(n)$ with the usual right regular action. Notice that $\mathbb{S}[\GL(n)/P_r]$ is the same as the representation,  \[ \nu^{-(\frac{n-1}{2})} \times \ldots \times  \nu^{(\frac{-n+2r-3}{2})} \times Z([ \nu^{(\frac{-n+2r-1}{2})}, \nu^{(\frac{-n+2r+1}{2})}]) \times \nu^{(\frac{-n+2r+3}{2})} \times \ldots \times \nu^{(\frac{n-1}{2})}. \]

We claim that $\pi_{n-r}$ is a quotient of $\mathbb{S}[\GL(n)/P_r]$. This is because, borrowing notations from Theorem \ref{list}, the above principal series is equal to,
\[ Z(\Delta_{(n-r)(n-1)}) \times Z(\Delta_{(n-r)(n-2)}) \times \ldots Z(\Delta_{(n-r)(n-r)}) \times \ldots Z(\Delta_{(n-r)1}). \]
We recall that in the above expression $\Delta_{(n-r)(n-r)}$ is the only segment having length 2 and that, 
\[\Delta_{(n-r)(n-r)} = [ \nu^{(\frac{-n+2r-1}{2})}, \nu^{(\frac{-n+2r+1}{2})}] .\] This shows that $\pi_{n-r}$ is a quotient of $\mathbb{S}[\GL(n)/P_r]$.

Next we prove that if $i\neq (n-r)$ then $\pi_i$ is not a subquotient of $\mathbb{S}[\GL(n)/P_r]$. This is because if $\pi_i$ were a subquotient of $\mathbb{S}[\GL(n)/P_r]$ then any irreducible subquotient of $\pi_i^{(n-1)}$ would be a subquotient of $\mathbb{S}[\GL(n)/P_r]^{(n-1)}$. Observe that $\pi_i^{(n-1)}=\nu^{(\frac{n-2i-1}{2})}$ (see \cite[Theorem 8.1]{ze80}) whereas $\mathbb{S}[\GL(n)/P_r]^{(n-1)}= \nu^{(\frac{-n+2r-1}{2})}$. Since $\nu^{(\frac{n-2i-1}{2})}\neq \nu^{(\frac{-n+2r-1}{2})}$ if $i\neq (n-r)$ we conclude that for $i\neq (n-r)$, $\pi_i$ is not a subquotient of $\mathbb{S}[\GL(n)/P_r]$.

We now claim that no irreducible subquotient of $\mathbb{S}[\GL(n)/P_r]$ other than $\pi_{n-r}$ has a non-trivial map to $\St_{n-1}$. Let $\tau$ be an irreducible subquotient of $\mathbb{S}[\GL(n)/P_r]$ other than $\pi_{n-r}$. Suppose that $\tau = Z(\Delta_1,\Delta_2,\ldots,\Delta_k)$ in terms of the Zelevinsky classification. Since, $\tau\neq \pi_i$ for $i=1,2,\ldots,(n-1)$ and $\tau$ is non generic, we conclude that either some segment $\Delta_i$ has length greater than 2 or that at least two of the segments have length greater than 1.  Since, \[ \tau \hookrightarrow Z(\Delta_1)\times Z(\Delta_2) \times \ldots \times Z(\Delta_k) \] by taking duals we conclude that $\tau$ is a quotient of the principal series $Z(\Delta_1^{\vee})\times Z(\Delta_2^{\vee}) \times \ldots \times Z(\Delta_k^{\vee})$. Therefore by Proposition \ref{proposition for generic} and Proposition \ref{proposition for list}, we conclude that irreducible subquotients of the principal series $\mathbb{S}[\GL(n)/P_r]$ other than $\pi_{n-r}$ do not have a non-trivial map to $\St_{n-1}$. Therefore given the map, 
 \[ \mathbb{S}[\GL(n)/P_r]\cong \mathbb{S}[\GL(n)/Q_r] \twoheadrightarrow \St_{n-1} \] going modulo the irreducible subquotients other than $\pi_{n-r}$ we get the map,
\[ \pi_{n-r} \twoheadrightarrow \St_{n-1}. \]
This gives us a geometric realization of a homomorphism from $\pi_{n-r}$ to $\St_{n-1}$ for all $r=1,2,\ldots,(n-1)$.

In the next section, we will give another purely geometric argument to say that any $G$-equivariant map from 
$\mathbb{S}(G/P_r)$ to $\St_{n-1}$ factors through $\pi_{n-r}$. We have preferred to give the above argument using the Zelevinsky 
classification as Propositions \ref{proposition for generic} and \ref{proposition for list} are essential for us to prove that non-trivial homomorphisms from other irreducible subquotients of
$\mathbb{S}(G/P_r)$ to $\St_{n-1}$ do not exist in most cases, which cannot be deduced from geometric arguments which are
good to construct homomorphisms but not to say none exist!

We also point out that Lemma \ref{lemma on parabolic} gives us a geometric reason for why $\pi_1,\pi_2,\ldots$, $\pi_{n-1}$ have $\St_{n-1}$ as a quotient upon restriction. Any irreducible non-generic representation of $\GL_n(F)$ can be realized as a quotient of a principal series representation arising from a parabolic subgroup $P$ corresponding to a partition $(n_1,n_2,\ldots,n_k)$ of $n$ such that $n_i>1$ for some $i\in \{1,2,\ldots,k\}$. If we believed that homomorphisms between such representations should arise from morphisms between geometric spaces we expect that there should exist an injection, \[ \GL(n-1)/B_{n-1} \hookrightarrow \GL(n)/P.\] By Lemma \ref{lemma on parabolic} this is possible only if exactly one of the $n_i$ is equal to 2 and all the other $n_j$'s are equal to 1. These parabolic subgroups correspond to the irreducible non-generic representations $\pi_1,\pi_2,\ldots,\pi_{n-1}$. This gives a heuristic argument why other irreducible non-generic representations do not have $\St_{n-1}$ as a quotient upon restriction. However, we point out that such heuristic using geometric arguments does not work in all situations (see Remark \ref{remark in section 9 on heuristic}).

\subsection{A Question for Classical Groups}

The statement of Lemma \ref{lemma on parabolic} can be reformulated as follows. There exists an injection $\GL(n-1)/B_{n-1} \hookrightarrow \GL(n)/P$ if and only if the rank of $[L,L]$ is less than or equal to $1$, where $L$ is a Levi subgroup of the parabolic subgroup $P$. Here, $[L,L]$ denotes the derived subgroup of $L$.

In light of this reformulation, the geometric considerations considered above for the general linear group can also be considered for other classical groups, say $\SO(n-1) \subset \SO(n)$ with $\SO(n-1)$ quasi-split. In this case, we are led to the following question which we do not know how to answer. The question below can analogously be formulated for more general principal series as well, but we content ourselves with this simplest case.
 
\begin{question} \label{question in section 9}
Let $W$ be a codimension 1 nondegenerate subspace of a quadratic space $V$ of dimension $n$ over a non-archimedean local field $F$. Consider the pair $(G,H) = (\SO(n), \SO(n-1))= (\SO(V), \SO(W))$ and assume that the rank of $G$ is greater than rank of $H$. Assume $H$ is quasi-split, and let $P$ be a parabolic subgroup of $G$. Then can we say that the Steinberg representation of $H$ appears as a quotient of the representation of $G$ realized on locally constant functions on $G/P$ if and only if 
the rank of $[L,L]$ is less than or equal to $1$, where $L$ is a Levi subgroup of $P$?
\end{question}

\begin{remark} \label{remark in section 9 on heuristic}
It is only for the Steinberg representation on the smaller group that we can hope to realize $\Hom_{H}(\pi_1,\pi_2)$, $\pi_2$ being the Steinberg of $H$ through geometric means. For example, suppose $\chi_1,\chi_2$ and $\chi_3$ are unitary characters of $\GL_1(F)$. Then by the non-generic Gan-Gross-Prasad conjecture (see \cite{ch22}), the non-generic (irreducible unitary) representation $Z([\chi_1\nu^{-1/2},\chi_1\nu^{1/2}])\times Z([\chi_2\nu^{-1/2},\chi_2\nu^{1/2}])$ of $\GL_4(F)$ has the representation $\chi_1\times \chi_2 \times \chi_3$ of $\GL_3(F)$ as a quotient though it cannot be realized through a map from $\GL_3/B_3$ to $\GL_4/P_{2,2}$ (as none exist!). As mentioned earlier, such geometric arguments are good to construct non-trivial homomorphisms but not to say that none exist. Thus in the next section, for question \ref{question in section 9}, we construct a $G$-equivariant map from locally constant compactly supported functions from $G/P$ to $\St_H$ if the rank of $[L,L]\leq 1$, but these geometric methods do not allow us to prove that none exist if the rank of $[L,L]\geq 2$.
\end{remark}

\section{Alternative Proof of the Lower Bound in Theorem \ref{multiplicity for steinberg} (B)} \label{section ten}

In Subsection \ref{subsection six point three}, we gave an inductive proof for the fact that \[ \dim \Hom_{\G_{n-1}}(\nu^{-(\frac{n-1}{2})} \times 
\nu^{-(\frac{n-3}{2})} \times \ldots \times \nu^{(\frac{n-1}{2})},\St_{n-1}) \geq n. \] In this section, we give an alternative proof of this result from the geometric perspective of the last section. The advantage of this perspective is that it can also be generalized to the context of other classical groups, for instance the pair $(G,H) = (\SO(n), \SO(n-1))$ or $(G,H) = (\U(n), \U(n-1))$ (with both $G$ and $H$ quasi-split). Assuming that the Steinberg representation of $G$ is a projective module under restriction to $H$, we obtain a higher multiplicity result for the pair $(G,H)$. Of course in the context of the general linear group it is known (see \cite{cs21}) that $\St_n|_{\G_{n-1}}$ is a projective module in the category $\Alg(\G_{n-1})$. 

We now go back to our situation of $(G,H)= (\GL(n), \GL(n-1))$. 
Fix a Borel subgroup $B_H$ of $H$, contained in a unique Borel subgroup $B_G$ of $G$. Let $S_1$ (resp. $S_2$) denote the set of all standard parabolic subgroups $P$ of $G$ with a  Levi
subgroup of semi-simple rank equal to 1 (resp. equal to 2). Let $P_1,P_2,\ldots,P_{n-1}$ denote the set of standard parabolic subgroups in $S_1$ (there are $n-1$ of them for the pair $(G,H)= (\GL(n), \GL(n-1))$). Let $P_{ij}$ denote a standard parabolic subgroups belonging to $S_2$ containing $P_i$, thus $P_{ij} \supset P_i \supset B_G$.

We wish to prove that, \[ \dim \Hom_{H}(\mathbb{S}(G/B_G),\St_H) \geq n. \] Observe that for $P_i\in S_1$, $\mathbb{S}(G/P_i)\subset \mathbb{S}(G/B_G)$. Also by definition we have that $\frac{\mathbb{S}(G/B_G)}{\sum_{P_i\in S_1}\mathbb{S}(G/P_i)} \cong \St_G$. Hence we obtain the following short exact sequence of $G = \GL_n(F)$-modules, 
\begin{equation} \label{eq:five}
0 \rightarrow \frac{\sum_{P_i\in S_1}\mathbb{S}(G/P_i)}{\sum_{P_{ij}\in S_2}\mathbb{S}(G/P_{ij})} \rightarrow \frac{\mathbb{S}(G/B_G)}{\sum_{P_{ij}\in S_2}\mathbb{S}(G/P_{ij})} \rightarrow \St_G \rightarrow 0.    
\end{equation} 

For each $P_i\in S_1$, let $K_i = \frac{\mathbb{S}(G/P_i)}{\sum_{P_{ij}\in S_2} \mathbb{S}(G/P_{ij})}$,
where the sum in the denominator is understood to be taken over all those $P_{ij}$ that (strictly) contain $P_i$. We also set, $K = \frac{\sum_{P_i\in S_1}\mathbb{S}(G/P_i)}{\sum_{P_{ij}\in S_2}\mathbb{S}(G/P_{ij})}$. We note that the spaces in the numerator and denominator of $K_i$'s and $K$ are contained in $\mathbb{S}(G/B_G)$, and the sums refer to their sum as subspaces of $\mathbb{S}(G/B_G)$. We claim that the natural map, \[ \psi: \sum_i K_i \rightarrow K \] is an isomorphism. The surjectivity of $\psi$ is clear. On the other hand, by \cite[Theorem 1.1]{ca81} the $K_i$'s are distinct irreducible submodules of $K$. We conclude that $\psi$ is injective and $\sum_i K_i = \oplus_i K_i$. 

Assume that $\St_G|_H$ is projective in $\Alg(H)$. This is certainly true for the pair $(\GL_n,\GL_{n-1})$. In this case the short exact sequence in Equation \ref{eq:five} splits to give a decomposition (treating all representations of $G$ as representations of $H$),
\[ \frac{\mathbb{S}(G/B_G)}{\sum_{P_{ij}\in S_2}\mathbb{S}(G/P_{ij})} = \oplus_{i=1}^{n-1} K_i \oplus \St_G. \] We obtain that, 
\[ \dim \Hom_H(\mathbb{S}(G/B_G),\St_H) \geq \sum_i \dim \Hom_H(K_i,\St_H) + \dim \Hom_H(\St_G,\St_H). \] Since, $\dim \Hom_H(\St_G,\St_H)=1$ if we show that $\dim \Hom_H(K_i,\St_H)\geq 1$ for all $i=1,2,\ldots,(n-1)$, we will have proved that \[ \dim \Hom_{H}(\mathbb{S}(G/B_G),\St_H) \geq n.\] Now let $Q_i$ be the parabolics inside $\GL_{n}(F)$ as defined in Section \ref{section nine point one} with the property that $Q_i\cap H = B_H$, giving rise to the natural maps,
\[ \Tilde{f_i}: \mathbb{S}(G/Q_i) \rightarrow \mathbb{S}(H/B_H) \rightarrow \St_H. \] 

Let $Q_{ij}$ denote a parabolic of $G$ strictly containing $Q_i$. By Lemma \ref{lemma on parabolic}, the image of the submodules $\mathbb{S}(G/Q_{ij})\subset \mathbb{S}(G/Q_i)$ under the map $\Tilde{f_i}$ lies in $\mathbb{S}(H/Q_{H_{ij}})$ where $Q_{H_{ij}}$ is a parabolic subgroup of $H$ containing $B_H$ but
not equal to $B_H$. Since the composition of the maps $\mathbb{S}(H/Q_{H_{ij}})\rightarrow \mathbb{S}(H/B_H) \rightarrow \St_H$ is zero for $Q_{H_{ij}}$ \textit{strictly} containing $B_H$, $\mathbb{S}(G/Q_{ij})$ lies in the kernel of the map, 
\[ \Tilde{f_i}: \mathbb{S}(G/Q_i) \twoheadrightarrow \St_H. \] This gives rise to the natural induced map, 
\[ \bar{f_i}: \frac{\mathbb{S}(G/Q_i)}{\sum_{Q_{ij}\supset Q_i} \mathbb{S}(G/Q_{ij})} \twoheadrightarrow \St_H. \] Since $P_i$ and $Q_i$ are conjugates, we have an isomorphism of $G = \GL_n(F)$-modules:
\[ K_i= \frac{\mathbb{S}(G/Q_i)}{\sum_{Q_{ij}\supset Q_i} \mathbb{S}(G/Q_{ij})}  \cong \frac{\mathbb{S}(G/P_i)}{\sum_{P_{ij}\supset P_i} \mathbb{S}(G/P_{ij})}.  \]
This proves that $\dim \Hom_H(K_i,\St_H)\geq 1$ for all $i=1,2,\ldots,(n-1)$. This combined with the fact that $\dim \Hom_H(\St_G,\St_H)=1$ gives us that, \[ \dim \Hom_{H}(\mathbb{S}(G/B_G),\St_H) \geq n. \]

\vskip 15pt

\end{document}